\tikzstyle{decision} = [diamond, draw, fill=blue!20, 
\tikzstyle{block} = [rectangle, draw, fill=blue!20, 
\tikzstyle{line} = [draw, -latex']
\tikzstyle{cloud} = [draw, ellipse,fill=red!20, node distance=3cm,
 \newtheoremstyle{mystyle}{36pt}{}{}{}{\bfseries}{.}{ }{}
  \theoremstyle{plain}
 \newtheorem{theorem}{Theorem}
 \newtheorem{lemma}[]{Lemma}
 \newtheorem{definition}[]{Definition}
 \newtheorem{proposition}[]{Proposition}
 \newtheorem{example}{Example}
  \theoremstyle{remark}
 \newtheorem{remark}{Remark}
\tikzset{main node/.style={circle,fill=blue!20,draw,minimum size=1cm,inner sep=0pt},  }
\newcommand{\vr}{\overrightarrow}
\newcommand{\ts}{\mathsf{T}}
\newcommand{\dd}{\mathcal{\dagger}}
\begin{document}
\title[Transport information geometry]{Transport information geometry I: Riemannian calculus on probability simplex}
\author[Li]{Wuchen Li}
\email{wcli@math.ucla.edu}
\address{Department of Mathematics, University of California, Los Angeles.}
\keywords{Optimal transport; Information geometry; Probability manifold; Linear weighted Laplacian; Graph.}
\begin{abstract}
We formulate the Riemannian calculus of the probability set embedded with $L^2$-Wasserstein metric. This is an initial work of transport information geometry. 
Our investigation starts with the probability simplex (probability manifold) supported on vertices of a finite graph. The main idea is to embed the probability manifold as a submanifold of the positive measure space with a nonlinear metric tensor. Here the nonlinearity comes from the linear weighted Laplacian operator. By this viewpoint, we establish torsion--free Christoffel symbols, Levi-Civita connections, curvature tensors and volume forms in the probability manifold by Euclidean coordinates. As a consequence, the Jacobi equation, Laplace-Beltrami and Hessian operators on the probability manifold are derived. These geometric computations are also provided in the infinite-dimensional density space (density manifold) supported on a finite-dimensional manifold. In particular, an identity is given connecting the Baker-{\'E}mery $\Gamma_2$ operator (carr{\'e} du champ it{\'e}r{\'e}) by connecting Fisher-Rao information metric and optimal transport metric. Several examples are demonstrated.
\end{abstract}
\maketitle
\section{Introduction}
In recent years, optimal transport theory, a.k.a. the Monge-Kantorovich problem has attracted a lot of attention in various fields, such as partial differential equations \cite{CD, otto2001}, functional inequalities \cite{OV}, geometry \cite{CY, Lott_Villani, Mc, strum} and evolutionary dynamics \cite{li-thesis}. Given a finite-dimensional manifold $M$, it studies metrics in the set of probability measures $\mathcal{P}(M)$. In particular, the $L^2$-Wasserstein metric $W$ gives the Riemannian structures of the probability density set. In literature \cite{Lafferty}, the infinite-dimensional space $(\mathcal{P}(M), W)$, in the sense of Fr{\'e}chet manifold \cite{IM} is named density manifold. The density manifold gives the other viewpoint on many classical equations. For example, the gradient flow in density manifold connects to the Fokker-Planck equation \cite{JKO}, while the Hamiltonian flow in density manifold has deep relations with the Schr{\"o}dinger equation \cite{Carlen, Lafferty, Nelson1, Nelson2}. 

The study of the Riemannian structure of density manifold is necessary for the following three reasons. Firstly, the geometry formulas in density manifold can be used to study the long-time behaviors of the gradient \cite{CMV} and Hamiltonian flows in $\mathcal{P}(M)$; Secondly, the differential structures in $\mathcal{P}(M)$ and $M$ have many interactions. For instance, the Hessian of negative Boltzmann-Shannon entropy in density manifold relates to the Ricci curvature by Bochner's formula \cite{BE}, and connects to the Yano's formula on $M$ \cite{li-theory}; Lastly, it helps to design and analyze the evolutionary dynamics in population games; see chapter 4 of \cite{li-thesis}. Besides these motivations, Villani asks the following two questions on page 444-445 of his famous book \cite{vil2008}. ``Can one define the Christoffel symbol, Laplace operator, volume form and divergence operator on density manifold, at least formally?'' In addition, ``Open problem 15.11: Find a nice formula for the Hessian of the functional in density manifold.''

In this paper, we complete answer this question in finite dimensional sample space.  We study the Riemannian structure of $L^2$-Wasserstein metric in the probability simplex supported on finite graphs \cite{li-theory}, a topic initialized in \cite{chow2012, EM1, M}, and further extend the derivations into the infinite-dimensional positive smooth density manifold. The approach follows the study of a linear weighted Laplacian operator (a concept defined in section \ref{section2}), and the construction of Shahshahani metric, also known as Fisher-Rao metric  in evolutionary dynamics \cite{Fisher-Rao1} and information geometry \cite{Amari, divergence, IG2}. 

The main result is sketched as follows. We construct a Riemannian metric tensor in the discrete positive measure space (positive orthant) $(\mathcal{M}_+, g)$, where $g$ is a positive definite metric tensor depending on the linear weighted graph Laplacian operator. In Theorem 1, we show that the probability manifold is endowed with the $L^2$-Wasserstein metric, as a submanifold of $(\mathcal{M}_+, g)$. Following the geometry structures in $(\mathcal{M}_+, g)$, we derive the corresponding ones in the probability manifold, such as torsion-free Christoffel symbol, volume form, Laplace-Beltrami, and Hessian operators by Proposition 2-9. Similar derivations are also provided in the infinite-dimensional positive density manifold by Proposition 10-18. The Hessian operator in density manifold is provided by Proposition \ref{Hess_new}, by which the connection between the metric tensor in $\mathcal{P}(M)$ and Bakry-{\'E}mery $\Gamma_2$ operator in $M$ is introduced in Proposition \ref{col}, in which the Bochner's and Yano's formulas are connected. Here we emphasize the mathematical relation between the Fisher-Rao metric and Wasserstein metric for deriving Gamma two operators. This fact becomes clear by formulating the torsion-free Wasserstein Christoffel symbol explicitly. 

In literature, the Riemannian structure of density manifold $\mathcal{P}(M)$ has been studied from the differential structures of $M$, by the works of Lafferty \cite{Lafferty}, Lott \cite{Lott} and Gigli \cite{Gigli}. In \cite{Lafferty}, guiding by the stochastic mechanics, the Riemannian and symplectic structures of density manifold are introduced by the tangent vectors in the base manifold $M$. This approach follows Moser's theorem \cite{Moser}. The density manifold is viewed as the quotient of the group of diffeomorphisms in $M$ which preserve the Riemannian volume of $M$; By the notation of \cite{otto2001}, similar geometric calculations have been done in \cite{Lott}, in which the curvature tensor in density manifold is established by the ones in $M$. \cite{Gigli} proves the regularity issues of second-order operators in $\mathcal{P}(\mathbb{R}^d)$. 
Different from the approach of diffeomorphisms in $M$, we study the geometry of probability and density manifold by the linear weighted Laplacian operator. This angle gives the geometry formulas of probability simplex supported on both finite graphs and continuous states. Besides, the canonical volume form on density manifold is asked by \cite{WD}. We introduce the one in the discrete probability manifold. 

It is worth mentioning that the idea of embedding probability manifold into positive octant is motivated by the Shahshahani metric \cite{Fisher-Rao1}, also called the Fisher-Rao metric in information geometry \cite{Amari, divergence, IG2}. And the metric tensor in probability manifold is based on the linear weighted Laplacian matrix. This operator is closely related to the osmotic diffusion considered in Nelson's stochastic mechanics \cite{Lafferty, Nelson2}. Besides, the connection between the Bakery-{\'E}mery $\Gamma_2$ operator and the metric tensor of density manifold follows the work of \cite{BE, Nelson2, OV} and the study of Yano's formula \cite{li-theory, Yano}.

The plan of this paper is as follows. In section \ref{section2}, we review the optimal transport on a graph. 
In section \ref{external}, we derive the Riemannian structure of the discrete probability simplex set. In section \ref{section5}, we introduce the associated Riemannian calculus in the density manifold. Several examples are introduced to illustrate the geometry of probability manifold in section \ref{section4}.
\section{Review on Probability manifold on graphs}\label{section2}
In this section, we carefully review the definition of $L^2$-Wasserstein metric on finite graphs. Here the results are provided by \cite{chow2012, EM1, M}. We summarize them into the matrix formulation \cite{li-theory}. 

Consider a weighted undirected finite graph $G=(V, E, \omega)$, where 
$V=\{1,2,\cdots, n\}$
is the vertex set, $E$ is the edge set, and $\omega=(\omega_{ij})_{i,j\in V}\in \mathbb{R}^{n\times n}$ is the weight of each edge with
$\omega_{ij}=\begin{cases}\omega_{ji}>0
& \textrm{if $(i,j)\in E$}\\
0 & \textrm{otherwise}
\end{cases}$. We denote the adjacent set or neighborhood of $i$ by $N(i)=\{j\in V\colon (i,j)\in E\}$. 

The probability simplex supported on all vertices of $G$ is defined by 
\begin{equation*}
\mathcal{P}(G)=\{(\rho_1,\cdots, \rho_n)\mid \sum_{i=i}^n \rho_i=1,\quad  \rho_i\geq 0\}\in \mathbb{R}^n,
\end{equation*}
where $ \rho_i$ is the discrete probability function at node $i$, whose interior is denoted by
 $$\mathcal{P}_+(G)=\{(\rho_1,\cdots, \rho_n) \mid \sum_{i=1}^n \rho_i=1,\quad  \rho_i>0 \},$$ 
 and whose boundary is given by $\partial\mathcal{P}(G)=\mathcal{P}(G)\setminus\mathcal{P}_+(G)$.

Given a potential $\Phi\in \mathbb{R}^{n}$ on a graph, a
{\em gradient vector field} $\nabla_G\Phi=(\nabla_{ij}\Phi)_{i,j\in V}\in \mathbb{R}^{n\times n}$ refers to
\begin{equation*}
\nabla_{ij}\Phi=\sqrt{\omega_{ij}}(\Phi(i)-\Phi(j)) .\end{equation*} 
Here the {\em vector field} $v=(v_{ij})_{i,j\in V}\in \mathbb{R}^{n\times n}$ on a graph is a {\em skew-symmetric matrix}:  
\begin{equation*}
v_{ij}=\begin{cases}-v_{ji} & \textrm{if $(i,j)\in E$}\\
0 & \textrm{otherwise}
\end{cases}.
\end{equation*}
For simplicity of notation, we would like to vectorize the matrix $v\in \mathbb{R}^{n\times n}$. To do so, we decompose the indirect edge set by two direct edge sets 
$E=\overrightarrow{E}\cup \overleftarrow{E}$, i.e. each edge is assigned a given orientation. By abusing the notation, we denote $v=(v_{ij})_{(i,j)\in \overrightarrow{E}}\in \mathbb{R}^{|E|}$, $\nabla\Phi=(\nabla_{ij}\Phi)_{(i,j)\in \overrightarrow{E}}\in \mathbb{R}^{|E|}$.

The divergence of $v$, $\textrm{div}(v)=(\textrm{div}(v)(i))_{i\in V}\in \mathbb{R}^{n}$, is defined by \begin{equation*}
\textrm{div}(v)(i) = -\sum_{j\in N(i)}\sqrt{w_{ij}}v_{ij}.
\end{equation*}
Above definitions introduce the discrete integration by parts 
\begin{equation*}
\sum_{i=1}^n\Phi(i)\textrm{div}(v)(i)=-\sum_{(i,j)\in \overrightarrow{E}}\omega_{ij}v_{ij}(\Phi(i)-\Phi(j)).
\end{equation*}
It is worth mentioning that the above formula does not depend on the orientation of graph $E=\overrightarrow{E}\cup \overleftarrow{E}$, i.e.
$$\sum_{(i,j)\in \overrightarrow{E}}\omega_{ij}v_{ij}(\Phi(i)-\Phi(j))=\sum_{(i,j)\in \overleftarrow{E}}\omega_{ij}v_{ji}(\Phi(j)-\Phi(i))=\frac{1}{2}\sum_{(i,j)\in{E}}\omega_{ij}v_{ij}(\Phi(i)-\Phi(j)).$$
Here the coefficient $1/2$ in front of the summation accounts for the fact that every edge in $E$ is counted twice.

A  probability weighted vector field (flux) $((\rho v)_{ij})_{(i,j)\in \vr{E}}\in \mathbb{R}^{|E|}$ is introduced
\begin{equation*}
(\rho v)_{ij}:=v_{ij}\theta_{ij}(\rho),
\end{equation*}
where $\theta_{ij}(\rho)$ represents the probability weight on $\mathrm{edge}$ $(i,j)$, defined by 
\begin{equation*}
\theta_{ij}(\rho):=\theta(\rho_i,\rho_j)=\frac{\rho_i+\rho_j}{2}.
\end{equation*}
In discrete states, there are multiple choice of $\theta_{ij}$, such as up-wind scheme \cite{li-thesis} and logarithm mean \cite{EM1, M}. Here we focus on $\theta_{ij}$ being a linear function of $\rho$.

Given two vector fields $v$, $\tilde{v}$ on a graph and $\rho \in \mathcal{P}(G)$, denote an inner product for vector fields $(\cdot,\cdot)_{\rho}\colon\mathbb{R}^{|E|}\times \mathbb{R}^{|E|}\rightarrow \mathbb{R}$ by 
\begin{equation*}
(v, \tilde{v})_ \rho:=\sum_{(i,j)\in \vr{E}} v_{ij}\tilde{v}_{ij}\theta_{ij}(\rho). 
\end{equation*}
Based on above definitions, the $L^2$-Wasserstein distance on probability set $\mathcal{P}(G)$ is defined as follows. 
\begin{definition}[$L^2$-Wasserstein metric on a graph]
Given two points $\rho^0$, $ \rho^1\in\mathcal{P}(G)$, the metric $W\colon\mathcal{P}(G)\times \mathcal{P}(G)\rightarrow \mathbb{R}$ is defined by:
\begin{equation}\label{metric_BB}
\left(W(\rho^0,\rho^1)\right)^2:=\inf_{\rho(t), v(t)} \Big\{\int_0^1(v(t), v(t))_{\rho(t)} dt~:~ \frac{d\rho}{dt}+\mathrm{div}(\rho v)=0,~\rho(0)=\rho^0,~\rho(1)=\rho^1\Big\}.
\end{equation}
\end{definition}
The variational problem \eqref{metric_BB} admits an equivalent reformulation in terms of a Riemannian tensor, as seen in the next definition. To show this point, the following linear matrix functions are needed. 
\begin{definition}[Linear Weighted Laplacian matrix]\label{def2}
Define the matrix function $L(\cdot):\mathbb{R}^n\rightarrow \mathbb{R}^{n\times n}$ by
\begin{equation*}
L(a)=D^{\ts}\Theta(a)D,\quad a=(a_i)_{i=1}^n\in \mathbb{R}^n,
\end{equation*}
where $D \in \mathbb{R}^{|E|\times n}$ is the discrete gradient operator, i.e.
\begin{equation*} 
D_{(i,j)\in \vr{E}, k\in V}=\begin{cases}
\sqrt{\omega_{ij}} & \textrm{if $i=k$}\\ 
-\sqrt{\omega_{ij}} & \textrm{if $j=k$}\\
0 & \textrm{otherwise}
\end{cases},
\end{equation*}
$-D^{\ts}\in \mathbb{R}^{n\times |E|}$ is the discrete divergence operator (oriented incidence matrix),
and $\Theta(a)\in \mathbb{R}^{|E|\times |E|}$ is a weight matrix
\begin{equation*}
\Theta(a)_{(i,j)\in \vr{E}, (k,l)\in \vr{E}}=\begin{cases}
\theta_{ij}(a)=\frac{a_i+a_j}{2} & \textrm{if $(i,j)=(k,l)\in \vr{E}$}\\ 
0 & \textrm{otherwise}
\end{cases}.
\end{equation*}
\end{definition}
\begin{lemma}[Discrete Hodge decomposition]\label{lemma_hodge}

If $\rho\in \mathcal{P}_+(G)$, the following properties hold: 
\begin{itemize}
\item[(i)] $L(\rho)$ is semi-positive matrix with a single zero eigenvalue. Denote the eigenvalue and corresponding orthonormal eigenvectors of $L(\rho)$ by $0=\lambda_0(\rho)<\lambda_{1}(\rho)\leq\cdots\leq \lambda_{n-1}(\rho)$, and $U(\rho)=(u_0, u_1(\rho),\cdots, u_{n-1}(\rho))$,
\begin{equation*}
L(\rho)=U(\rho)\begin{pmatrix}
0 & & &\\
& \lambda_{1}(\rho)& &\\
& & \ddots & \\
& & & {\lambda_{n-1}(\rho)}
\end{pmatrix}U(\rho)^{T} .
\end{equation*}
with 
\begin{equation}\label{u0}
u_0=\frac{1}{\sqrt{n}}(1,\cdots, 1)^{\ts}.
\end{equation}
\item[(ii)] For any vector field $v$ on a graph and $\rho\in \mathcal{P}_+(G)$, there exists a unique gradient vector field $\nabla_G\Phi\in \mathbb{R}^{|E|}$ on a graph, such that
\begin{equation*}
v_{ij}= \nabla_{ij}\Phi+ \Psi_{ij},\quad \textrm{div}(\rho \Psi)=0.
\end{equation*}
where $\Psi$ is a divergence free vector field w.r.t. $\rho$. In addition, 
$$\sum_{(i,j)\in \vr{E}}v_{ij}^2\theta_{ij}(\rho)=\sum_{(i,j)\in \vr{E}}[(\nabla_{ij}\Phi)^2 + \Psi_{ij}^2]\theta_{ij}(\rho).$$
\end{itemize}
\end{lemma}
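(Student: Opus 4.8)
The plan is to treat both parts as consequences of the factorization $L(\rho)=D^{\ts}\Theta(\rho)D$ together with two elementary identities in the vectorized ($\vec E$) notation: the discrete gradient is $\nabla_G\Phi=D\Phi$, and the weighted divergence is $-\mathrm{div}_G(\rho v)=D^{\ts}\Theta(\rho)v$. The latter is read off from the identity $(v,\nabla_G\Phi)_\rho=\Phi^{\ts}\big(-\mathrm{div}_G(\rho v)\big)$ already derived in the text, since the left-hand side equals $v^{\ts}\Theta(\rho)D\Phi$ and the equality holds for every $\Phi$.

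For part (i), I would first observe that because $\rho\in\mathcal{P}_+(G)$ every diagonal entry $\tfrac{\rho_i+\rho_j}{2}$ of $\Theta(\rho)$ is strictly positive, so $\Theta(\rho)=\Theta(\rho)^{1/2}\Theta(\rho)^{1/2}$ and $L(\rho)=\big(\Theta(\rho)^{1/2}D\big)^{\ts}\big(\Theta(\rho)^{1/2}D\big)$ is symmetric positive semidefinite. Evaluating the quadratic form gives
\[
x^{\ts}L(\rho)x=\sum_{(i,j)\in\vec E}\frac{\rho_i+\rho_j}{2}\,\omega_{ij}(x_i-x_j)^2\ge 0\ .
\]
This vanishes exactly when $x_i=x_j$ across every edge; since $G$ is connected (the standing assumption needed for a \emph{single} zero eigenvalue), the kernel is the line of constant vectors, spanned by the unit vector $u_0=\tfrac{1}{\sqrt n}(1,\dots,1)^{\ts}$. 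The remaining spectral statement is then the spectral theorem applied to the symmetric matrix $L(\rho)$, the simple zero eigenvalue yielding $0=\lambda_0(\rho)<\lambda_1(\rho)$.

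For part (ii), I would reduce the decomposition $v=\nabla_G\Phi+\Psi$ with $\mathrm{div}_G(\rho\Psi)=0$ to a discrete Poisson equation. Applying $D^{\ts}\Theta(\rho)$ to the ansatz and using $D^{\ts}\Theta(\rho)\Psi=-\mathrm{div}_G(\rho\Psi)=0$ turns it into $L(\rho)\Phi=D^{\ts}\Theta(\rho)v=-\mathrm{div}_G(\rho v)$. Solvability follows from the Fredholm alternative: the right-hand side must be orthogonal to $\ker L(\rho)=\mathrm{span}(u_0)$, which holds because $D\mathbf 1=0$ forces $\mathbf 1^{\ts}D^{\ts}\Theta(\rho)v=0$. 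Hence $\Phi$ exists and is unique up to an additive constant; since $D(\Phi+c\mathbf 1)=D\Phi$, the gradient field $\nabla_G\Phi=D\Phi$ is uniquely determined, and setting $\Psi:=v-\nabla_G\Phi$ produces a divergence-free remainder by construction.

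Finally, the orthogonal (Pythagorean) identity follows by expanding $(v,v)_\rho$ and showing the cross term vanishes: taking $v=\Psi$ in $(\Psi,\nabla_G\Phi)_\rho=\Phi^{\ts}\big(-\mathrm{div}_G(\rho\Psi)\big)$ and using $\mathrm{div}_G(\rho\Psi)=0$ gives $(\nabla_G\Phi,\Psi)_\rho=0$, so $(v,v)_\rho=(\nabla_G\Phi,\nabla_G\Phi)_\rho+(\Psi,\Psi)_\rho$. I expect the only genuinely delicate point to be the kernel computation in part (i): the \emph{single} zero eigenvalue is equivalent to connectivity of $G$, and the strict positivity of $\rho$ is exactly what keeps $\Theta(\rho)$ invertible, so that $\ker L(\rho)$ coincides with $\ker D$ rather than being enlarged. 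Everything else is bookkeeping with the two matrix identities above.
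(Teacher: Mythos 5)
Your proof is correct and follows essentially the same route as the paper: the quadratic form $\Phi^{\ts}L(\rho)\Phi$ together with connectivity of $G$ and strict positivity of $\rho$ identifies $\ker L(\rho)$ as the constant vectors, part (ii) is reduced to the discrete Poisson equation $L(\rho)\Phi=-\mathrm{div}_G(\rho v)$ solved uniquely up to an additive constant, and the Pythagorean identity follows from the vanishing cross term $(\nabla_G\Phi,\Psi)_\rho=\Phi^{\ts}\big(-\mathrm{div}_G(\rho\Psi)\big)=0$. The one point where you go beyond the paper is the solvability step: the paper simply asserts $\mathrm{div}_G(\rho v)\in\mathrm{Ran}\big(L(\rho)\big)$, whereas you justify it by the Fredholm alternative, checking orthogonality to $u_0$ via $D\mathbf{1}=0$, which closes a small gap in the paper's argument.
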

\begin{proof}
The proof is a direct extension of classical graph Hodge decomposition with the probability weight function $\theta_{ij}(\rho)$. We show that there exists a unique gradient vector field $\nabla_G\Phi$, such that
$$-\textrm{div}(\rho \nabla_G \Phi)=L(\rho) \Phi.$$
Since $\rho\in \mathcal{P}_+(G)$ and the graph is connected, then 
\begin{equation*}
\Phi^\ts L(\rho)\Phi=\sum_{(i,j)\in \vr{E}}\omega_{ij}(\Phi(i)-\Phi(j))^2\theta_{ij}(\rho)=0,
\end{equation*}
this implies that value $0$ must be a simple eigenvalue of $L(\rho)$ with eigenvector $(1,\cdots, 1)^{\ts}$. Since $\textrm{div}(\rho v)\in \textrm{Ran}L(\rho)$, and $\textrm{Ker}L(\rho)=\{u_0\}$.
Thus there exists a unique solution of $\Phi$ up to constant shrift, i.e. $\nabla_G \Phi$ is unique. And $\Psi=v-\nabla_G\Phi$ satisfies $\textrm{div}(\rho \Psi)=\textrm{div}(\rho v)-\textrm{div}(\rho\nabla\Phi)=0$. Let $v_{ij}= \nabla_{ij}\Phi+ \Psi_{ij}$, where $\textrm{div}(\rho \Psi)=0$. Then 
\begin{equation*}
\begin{split}
 \sum_{(i,j)\in \vr{E}}v_{ij}^2\theta_{ij}(\rho) =&\sum_{(i,j)\in \vr{E}}[\nabla_{ij}\Phi\nabla_{ij}{\Phi}+\Psi_{ij}{\Psi}_{ij}]\theta_{ij}(\rho)+2\sum_{i=1}^n\Phi(i)\textrm{div}(\rho {\Psi})(i)\\
 =&\sum_{(i,j)\in \vr{E}}[(\nabla_{ij}\Phi)^2+(\Psi_{ij})^2]\theta_{ij}(\rho),
 \end{split}
\end{equation*}
which finishes the proof.
\end{proof} 
From Lemma \ref{lemma_hodge}, for any discrete vector field $v$, there exists an unique $\nabla_G\Phi$ and $\Psi$, such that
$$(v,v)_\rho=(\nabla_G\Phi, \nabla_G\Phi)_\rho+(\Psi,\Psi)_\rho\geq  (\nabla_G\Phi, \nabla_G\Phi)_\rho, \quad \textrm{div}(\rho v)=\textrm{div}(\rho\nabla_G\Phi).$$
Thus the metric $W$ defined in \eqref{metric_BB} is equivalent to 
\begin{equation} \label{w2}
\begin{split}
\left(W( \rho^0, \rho^1)\right)^2=&\inf_{\Phi(t)}\{\int_0^1(\nabla_G\Phi(t), \nabla_G\Phi(t))_{\rho(t)} dt ~:~\frac{d\rho}{dt}+\textrm{div}( \rho \nabla_G\Phi)=0,~ \rho(0)= \rho^0,~ \rho(1)= \rho^1\}\\
=&\inf_{\Phi(t)}\{\int_0^1\Phi^{\ts}(t)L(\rho(t))\Phi(t) dt ~:~\frac{d\rho}{dt}=L(\rho)\Phi,~ \rho(0)= \rho^0,~ \rho(1)= \rho^1\},
\end{split}
\end{equation}
where the infimum is taken over potentials $\Phi(t)\in\mathbb{R}^n$. 
\subsection{Geometry setting}
We illustrate that \eqref{w2} gives the Riemannian structure in the probability simplex. Notice that $\mathcal{P}(G)=\mathcal{P}_+(G)\cup \partial\mathcal{P}(G)$ is a manifold with boundary. For related studies on the boundary set $\partial\mathcal{P}(G)$; see \cite{G}. In this paper, we focus on the Riemannian structure in the interior set $(\mathcal{P}_+(G), g_W)$. 

Denote the tangent space at a point $\rho \in \mathcal{P}_+(G)$, 
\begin{equation*}
T_\rho\mathcal{P}_+(G)=\{(\sigma_i)_{i=1}^n\in \mathbb{R}^n\colon \sum_{i=1}^n\sigma_i=0\},
\end{equation*}
and  
\begin{equation*}
T\mathcal{P}_+(G)=\{(\rho, \sigma)\colon \rho\in \mathcal{P}_{+}(G), \sigma\in T_\rho\mathcal{P}_+(G)\}.
\end{equation*}

\begin{definition}
The inner product $g_{W}(\cdot, \cdot):C^{\infty}(T\mathcal{P}_+(G))\times C^{\infty}(T\mathcal{P}_+(G))\rightarrow C^{\infty}(\mathcal{P}_+(G))$ is defined by
\begin{equation*}
g_{W}(\sigma_1,\sigma_2):={\sigma_1}^{\ts}L(\rho)^{\dagger}\sigma_2,\quad \textrm{for any $\sigma_1,\sigma_2\in T_\rho\mathcal{P}_+(G)$},
\end{equation*}
where matrix $L(\rho)^{\dagger}$ is the pseudo-inverse of $L(\rho)$, i.e.
\begin{equation*}
L(\rho)^{\dagger}=U(\rho)\begin{pmatrix}
0 & & &\\
& \frac{1}{\lambda_{1}(\rho)}& &\\
& & \ddots & \\
& & & \frac{1}{\lambda_{n-1}(\rho)}
\end{pmatrix}U(\rho)^{\ts} .
\end{equation*}
\end{definition}
Denote $\sigma_i=L(\rho)\Phi_i$, $i=1,2$, then 
\begin{equation}\label{relation}
{\sigma_1}^{\ts}L(\rho)^{\dagger}\sigma_2=\Phi_1^{\ts}L(\rho)L(\rho)^{\dd}L(\rho)\Phi_2=\Phi_1^{\ts}L(\rho)\Phi_2,
\end{equation}
where the second equality is from the definition of pseudo-inverse, i.e. $L(\rho)L(\rho)^{\dd}L(\rho)=L(\rho)$. 
Write $\dot\rho=\frac{d\rho}{dt}$, thus \eqref{w2} can be rewritten as 
\begin{equation*}
\left(W( \rho^0, \rho^1)\right)^2=\inf_{\rho(t)\in \mathcal{P}_+(G)}\Big\{\int_0^1\dot\rho^{\ts}L(\rho)^{\dd}\dot\rho dt~\colon~ \rho(0)= \rho^0,~ \rho(1)= \rho^1\Big\}.
\end{equation*}
Following the standard time reparametrization technique, we have
\begin{equation*}
W( \rho^0, \rho^1)=\inf_{\rho(t)\in \mathcal{P}_+(G)}\Big\{\int_0^1\sqrt{\dot\rho^{\ts}L(\rho)^{\dd}\dot\rho} dt~\colon~ \rho(0)= \rho^0,~ \rho(1)= \rho^1\Big\}.
\end{equation*}
Since $L(\rho)^{\dagger}$ is positive definite, i.e. 
\begin{equation*}
\inf_{\sigma\in \mathbb{R}^n}\Big\{\sigma^{\ts}L(\rho)^{\dd}\sigma\colon \sum_{i=1}^n\sigma_i=0,\quad \sum_{i=1}^n\sigma_i^2=1\Big\}=\frac{1}{\lambda_1(\rho)}>0,
\end{equation*}
and $L(\rho)$ is smooth w.r.t. $\rho$ because $U(\rho)$, $\lambda_i(\rho)$ are smooth, then $(\mathcal{P}_+(G), g_W)$ is a $(n-1)$ dimensional Riemannian manifold, named {\em probability manifold}.

\section{Riemannian calculus on probability manifold}\label{external}
In this section, we derive the main result of this paper. We embed $(\mathcal{P}_+(G), g_W)$ as a submanifold of discrete positive measure space with a nonlinear metric tensor. Then we derive many geometry concepts in $\mathcal{P}_+(G)$, including Christoffel symbols and curvature tensors. The Jacobi equation, Laplace-Beltrami and Hessian operators are also introduced.

Consider the discrete positive measure space (positive orthant) by
\begin{equation*}
\mathcal{M}_+(G)=\{(\mu_1,\cdots, \mu_n)\mid \mu_i>0 \}\subset \mathbb{R}^n_{+}.
\end{equation*}  
It is clear that $\mathcal{P}_+(G)\subset \mathcal{M}_{+}(G)$. And \begin{equation*}
\textrm{$T_\mu\mathcal{M}_+(G)=\mathbb{R}^n$,\quad 
$T\mathcal{M}_+(G)=\{(\mu, a)\colon \mu\in \mathcal{M}_+(G),~a\in \mathbb{R}^n\}$}.
\end{equation*}
We define a Riemannian inner product on $n$-dimensional manifold $\mathcal{M}_{+}(G)$. 
\begin{definition}[Inner product in $\mathcal{M}_+(G)$]
Define the inner product $g_{\mathcal{M}}\colon C^{\infty}(T\mathcal{M}_+(G))\times C^{\infty}(T\mathcal{M}_+(G))\rightarrow C^{\infty}(\mathcal{M}_+(G))$ by
\begin{equation*}
g_{\mathcal{M}}(a_1,a_2)=a_1^{\ts}g(\mu)a_2,\quad \textrm{for any $a_1$, $a_2\in T_\mu\mathcal{M}_+(G)$,}
\end{equation*}
where 
\begin{equation*}
g(\mu)=L(\mu)^{\dagger}+u_0u_0^{\ts}\in \mathbb{R}^{n\times n},
\end{equation*}
and $u_0$ is defined in \eqref{u0}.
\end{definition}
We shall show that $(\mathcal{M}_{+}(G),g_{\mathcal{M}})$ is a smooth $n$ dimensional Riemannian manifold, and $(\mathcal{P}_+(G), g_W)$ is a $(n-1)$ dimensional submanifold of $(\mathcal{M}_+(G), g_{\mathcal{M}})$ with the induced metric. 
\begin{theorem}[Induced metric]\label{embed}
Denote a natural inclusion by 
\begin{equation*}
\imath\colon\mathcal{P}_+(G)\rightarrow \mathcal{M}_+(G),\qquad \imath(\rho)=\rho,
\end{equation*}
then $\imath$ induces a Riemannian metric $W$ on $\mathcal{P}_+(G)$ via pullback:
\begin{equation*}
g_W(\sigma_1,\sigma_2)=g_{\mathcal{M}}(\sigma_1,\sigma_2),\quad \textrm{for any $\sigma_1,\sigma_2\in T_\rho\mathcal{P}_+(G)$}.
\end{equation*}
\end{theorem}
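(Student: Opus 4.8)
The plan is to compute the pullback metric $\imath^{*}g_{\mathcal{M}}$ directly and show that the rank-one correction term in $g(\mu)$ is invisible to tangent vectors of the simplex. Since $\imath(\rho)=\rho$ is the identity inclusion on coordinates, its differential $d\imath_{\rho}\colon T_\rho\mathcal{P}_+(G)\to T_\rho\mathcal{M}_+(G)=\mathbb{R}^n$ is simply the canonical embedding of the mean-zero hyperplane $\{\sigma\colon\sum_i\sigma(i)=0\}$ into $\mathbb{R}^n$. Consequently, for $\sigma_1,\sigma_2\in T_\rho\mathcal{P}_+(G)$ the pullback is nothing but $g_{\mathcal{M}}$ evaluated on these same vectors, namely $\imath^{*}g_{\mathcal{M}}(\sigma_1,\sigma_2)=\sigma_1^{\ts}\bigl(L(\rho)^{\dagger}+u_0u_0^{\ts}\bigr)\sigma_2$.

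First I would split this into the two summands $\sigma_1^{\ts}L(\rho)^{\dagger}\sigma_2$ and $\sigma_1^{\ts}u_0u_0^{\ts}\sigma_2$. The key step is to observe that $u_0=\frac{1}{\sqrt{n}}(1,\cdots,1)^{\ts}$ is orthogonal to every tangent vector of the simplex: since $\sum_i\sigma(i)=0$, we get $u_0^{\ts}\sigma=\frac{1}{\sqrt{n}}\sum_i\sigma(i)=0$ for any $\sigma\in T_\rho\mathcal{P}_+(G)$. Hence the rank-one term factors as $(\sigma_1^{\ts}u_0)(u_0^{\ts}\sigma_2)=0$, and the pullback collapses to $\sigma_1^{\ts}L(\rho)^{\dagger}\sigma_2=g_W(\sigma_1,\sigma_2)$, which is exactly the asserted identity.

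For the ambient manifold structure, I would verify that $g(\mu)$ is positive definite and smooth on $\mathcal{M}_+(G)$. By Lemma \ref{lemma_hodge}(i), the pseudo-inverse $L(\mu)^{\dagger}$ shares the orthonormal eigenbasis $U(\mu)=(u_0,u_1(\mu),\cdots,u_{n-1}(\mu))$ with eigenvalues $0,\frac{1}{\lambda_1(\mu)},\cdots,\frac{1}{\lambda_{n-1}(\mu)}$, while $u_0u_0^{\ts}$ is the orthogonal projector onto $\mathrm{span}\{u_0\}$. Adding the two matrices therefore raises the single zero eigenvalue in the $u_0$ direction to $1$ and leaves the remaining eigenvalues $\frac{1}{\lambda_k(\mu)}>0$ untouched; thus $g(\mu)$ has spectrum $\{1,\frac{1}{\lambda_1(\mu)},\cdots,\frac{1}{\lambda_{n-1}(\mu)}\}$ and is positive definite. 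Smoothness follows because the kernel of $L(\mu)$ is constantly spanned by $u_0$ throughout $\mathcal{M}_+(G)$—the computation in Lemma \ref{lemma_hodge}(i) uses only positivity of the entries $\mu_i>0$, not the normalization $\sum_i\mu_i=1$—so $L(\mu)$ has constant rank $n-1$ and its pseudo-inverse depends smoothly on $\mu$.

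There is no genuine obstacle here; the entire content lies in the design of $g(\mu)$. The correction $u_0u_0^{\ts}$ is precisely engineered to fill in the one missing eigendirection of $L(\mu)^{\dagger}$, rendering $g_{\mathcal{M}}$ nondegenerate on all of $\mathbb{R}^n$, yet this correction lives entirely in the normal direction $u_0$ to the simplex and so disappears upon restriction. The only point demanding a little care is confirming that the differential of $\imath$ is the literal inclusion of the hyperplane, which is immediate from $\imath(\rho)=\rho$.
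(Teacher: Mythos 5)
Your proposal is correct and follows essentially the same route as the paper: the eigendecomposition $g(\mu)=U(\mu)\,\mathrm{diag}\bigl(1,\tfrac{1}{\lambda_1(\mu)},\cdots,\tfrac{1}{\lambda_{n-1}(\mu)}\bigr)U(\mu)^{\ts}$ for positive definiteness, and the orthogonality $u_0^{\ts}\sigma=\tfrac{1}{\sqrt{n}}\sum_i\sigma(i)=0$ to annihilate the rank-one correction on tangent vectors. Your added remarks on smoothness (constant rank of $L(\mu)$, and that Lemma \ref{lemma_hodge}(i) needs only $\mu_i>0$ rather than normalization) make explicit a point the paper passes over silently, but they do not change the argument.
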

\begin{proof}
Since $L(\mu)^{\dagger}=U(\mu)\begin{pmatrix}
0 & & &\\
& \frac{1}{\lambda_{1}(\mu)}& &\\
& & \ddots & \\
& & & \frac{1}{\lambda_{n-1}(\mu)}
\end{pmatrix}U(\mu)^{\ts}$ with $U(\mu)=[u_0, u_1(\mu),\cdots, u_{n-1}(\mu)]$, then
\begin{equation*}
g(\mu)=\sum_{i=2}^n\frac{1}{\lambda_i(\mu)}u_i(\mu)u_i(\mu)^{\ts}+u_0u_0^{\ts}=U(\mu)\begin{pmatrix}
1 & & &\\
& \frac{1}{\lambda_{1}(\mu)}& &\\
& & \ddots & \\
& & & \frac{1}{\lambda_{n-1}(\mu)}
\end{pmatrix}U(\mu)^{\ts}.
\end{equation*}
So $g(\mu)$ is a positive definite matrix and smooth w.r.t. $\mu\in \mathcal{M}_+(G)$, and $(\mathcal{M}_+(G), g_{\mathcal{M}})$ is a smooth Riemannian manifold.

For any $\rho\in\mathcal{P}_+(G)$ and $\sigma_1, \sigma_2\in T_{\rho}\mathcal{P}_+(G)$, we need to show 
$g_W(\sigma_1,\sigma_2)=\sigma_1^{\ts}g(\rho)\sigma_2$, i.e. 
\begin{equation*}
\sigma_1^{\ts}(L(\rho)^{\dagger}+u_0u_0^{\ts})\sigma_2=\sigma_1^{\ts}L(\rho)^{\dagger}\sigma_2.
\end{equation*}
Since $u_0=\frac{1}{\sqrt{n}}(1,\cdots, 1)^{\ts}$, then $u_0^{\ts}\sigma_2=\frac{1}{\sqrt{n}}\sum_{i=1}^n\sigma_{2i}=0$, thus $\sigma_1^{\ts}u_0u_0^{\ts}\sigma_2=0$, which finishes the proof.
\end{proof}

Many geometry concepts of $(\mathcal{P}_+(G), g_W)$ follow directly from the ones in $(\mathcal{M}_+(G), g_{\mathcal{M}})$. We illustrate an example by the gradient operator. 
\begin{proposition}[Gradient]
Given $\mathcal{F}(\rho)\in C^{\infty}(\mathcal{P}_+(G))$, denote its gradient operators in $(\mathcal{M}_{+}(G),g_{\mathcal{M}})$ and $(\mathcal{P}_+(G), g_W)$ by 
$\nabla_g\mathcal{F}(\rho)\in T_{\rho}\mathcal{M}_+(G)$, $\nabla_W\mathcal{F}(\rho)\in T_{\rho}\mathcal{P}_+(G)$, respectively. Then 
\begin{equation*}
\nabla_W\mathcal{F}(\rho)=-\textrm{div}(\rho \nabla_G d_\rho\mathcal{F}(\rho)),\quad d_\rho\mathcal{F}(\rho)=(\frac{\partial}{\partial\rho_i}\mathcal{F}(\rho))_{i=1}^n,
\end{equation*}
is the orthogonal projection related to $g$ on $\mathcal{M}_+(G)$, of the restriction of $\nabla_g\mathcal{F}(\rho)$ to $\mathcal{P}_+(G)$, i.e.
\begin{equation*}
\nabla_W\mathcal{F}(\rho)=\nabla_g\mathcal{F}(\rho)-g_{\mathcal{M}}(\nabla_g\mathcal{F}(\rho), u_0)u_0.
\end{equation*}
\end{proposition}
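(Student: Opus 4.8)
The plan is to obtain both gradients directly from their defining variational identities and then check that the two expressions for $\nabla_W\mathcal{F}$ agree. Recall that on any Riemannian manifold the gradient of $\mathcal{F}$ is the unique tangent vector representing the Euclidean differential $d_\rho\mathcal{F}=(\partial_{\rho_i}\mathcal{F})_{i=1}^n$ through the metric. Thus $\nabla_g\mathcal{F}(\rho)\in T_\rho\mathcal{M}_+(G)$ is characterized by $g_{\mathcal{M}}(\nabla_g\mathcal{F},a)=(d_\rho\mathcal{F})^{\ts}a$ for all $a\in\mathbb{R}^n$, and $\nabla_W\mathcal{F}(\rho)\in T_\rho\mathcal{P}_+(G)$ by $g_W(\nabla_W\mathcal{F},\sigma)=(d_\rho\mathcal{F})^{\ts}\sigma$ for all $\sigma$ with $\sum_i\sigma_i=0$. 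The whole argument is then driven by the two facts $L(\rho)u_0=0$ and $g(\rho)u_0=u_0$.

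First I would handle $\nabla_W\mathcal{F}$ in the dual coordinates of Definition \ref{d9}. Writing $\nabla_W\mathcal{F}=L(\rho)\Phi$ for a potential $\Phi\in\mathcal{F}(G)/\mathbb{R}$, formula \eqref{formula} gives $g_W(\nabla_W\mathcal{F},\sigma)=\sigma^{\ts}\Phi$. The defining identity forces $\sigma^{\ts}\Phi=\sigma^{\ts}d_\rho\mathcal{F}$ for every $\sigma\in T_\rho\mathcal{P}_+(G)=u_0^{\perp}$, so $\Phi$ and $d_\rho\mathcal{F}$ differ only by a constant vector, i.e. $[\Phi]=[d_\rho\mathcal{F}]$. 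Since $L(\rho)u_0=0$, this constant shift is annihilated, giving $\nabla_W\mathcal{F}(\rho)=L(\rho)\,d_\rho\mathcal{F}(\rho)$. Invoking the identity $L(\rho)\Phi=-\textrm{div}_G(\rho\nabla_G\Phi)$ established in the proof of Lemma \ref{lemma_hodge} then yields the first claimed formula $\nabla_W\mathcal{F}(\rho)=-\textrm{div}_G(\rho\nabla_G d_\rho\mathcal{F}(\rho))$.

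Next I would compute $\nabla_g\mathcal{F}$ by inverting $g(\rho)$. From the spectral form in the proof of Theorem \ref{embed}, $g(\rho)=U(\rho)\,\mathrm{diag}(1,\tfrac{1}{\lambda_1},\dots,\tfrac{1}{\lambda_{n-1}})\,U(\rho)^{\ts}$ is invertible with
\[
g(\rho)^{-1}=U(\rho)\,\mathrm{diag}(1,\lambda_1,\dots,\lambda_{n-1})\,U(\rho)^{\ts}=L(\rho)+u_0u_0^{\ts}.
\]
Hence $g(\rho)\nabla_g\mathcal{F}=d_\rho\mathcal{F}$ gives $\nabla_g\mathcal{F}(\rho)=(L(\rho)+u_0u_0^{\ts})\,d_\rho\mathcal{F}(\rho)$.

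Finally I would verify the projection formula. The key computation is $g(\rho)u_0=(L(\rho)^{\dagger}+u_0u_0^{\ts})u_0=u_0$, using $L(\rho)^{\dagger}u_0=0$ and $u_0^{\ts}u_0=1$; in particular $u_0$ is a $g_{\mathcal{M}}$-unit vector and $g_{\mathcal{M}}(w,u_0)=w^{\ts}u_0=\tfrac{1}{\sqrt n}\sum_i w_i$, so $v\mapsto v-g_{\mathcal{M}}(v,u_0)u_0$ is exactly the $g_{\mathcal{M}}$-orthogonal projection onto $T_\rho\mathcal{P}_+(G)=\{w:\sum_i w_i=0\}$. Taking $a=u_0$ in the defining identity gives $g_{\mathcal{M}}(\nabla_g\mathcal{F},u_0)=(d_\rho\mathcal{F})^{\ts}u_0$, whence
\[
\nabla_g\mathcal{F}-g_{\mathcal{M}}(\nabla_g\mathcal{F},u_0)u_0=(L(\rho)+u_0u_0^{\ts})d_\rho\mathcal{F}-u_0u_0^{\ts}d_\rho\mathcal{F}=L(\rho)\,d_\rho\mathcal{F}=\nabla_W\mathcal{F},
\]
which closes the argument. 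The only genuine subtlety—and the step to treat carefully—is the first one: one must pass through the dual coordinates and track the constant-shift ambiguity in $\Phi$ correctly, rather than attempt to invert the degenerate matrix $L(\rho)$ directly. Everything else is routine linear algebra governed by $L(\rho)u_0=0$ and $g(\rho)u_0=u_0$.
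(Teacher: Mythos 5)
Your proof is correct, and its computational core coincides with the paper's: both rest on the inverse formula $g(\rho)^{-1}=L(\rho)+u_0u_0^{\ts}$, on the fact that $u_0$ is $g_{\mathcal{M}}$-normal to the simplex, and on the cancellation $(L(\rho)+u_0u_0^{\ts})d_\rho\mathcal{F}-u_0u_0^{\ts}d_\rho\mathcal{F}=L(\rho)d_\rho\mathcal{F}$. The difference is in the logical structure. The paper's proof consists only of the ambient computation: it relies (implicitly, via the induced metric of Theorem \ref{embed}) on the standard Riemannian-submanifold fact that the gradient of the restricted functional is the $g_{\mathcal{M}}$-orthogonal tangential projection of $\nabla_g\mathcal{F}$, and then computes that projection to be $L(\rho)d_\rho\mathcal{F}$; the identification of this vector with $\nabla_W\mathcal{F}$ is never checked against the intrinsic definition of the gradient on $(\mathcal{P}_+(G),g_W)$. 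You supply exactly that delegated step: you first derive $\nabla_W\mathcal{F}=L(\rho)d_\rho\mathcal{F}$ intrinsically from the defining identity $g_W(\nabla_W\mathcal{F},\sigma)=(d_\rho\mathcal{F})^{\ts}\sigma$, passing through the dual-coordinate formula \eqref{formula} and killing the constant ambiguity in the potential with $L(\rho)u_0=0$, and only afterwards verify that the ambient projection reproduces the same vector. Your route buys self-containedness: both assertions of the proposition (the explicit divergence formula and the projection identity) are proved from first principles, and your sharper observation $g(\rho)u_0=u_0$ (the paper records only $\sigma^{\ts}g(\rho)u_0=0$ for tangent $\sigma$) is precisely what justifies calling $v\mapsto v-g_{\mathcal{M}}(v,u_0)u_0$ the orthogonal projection onto $T_\rho\mathcal{P}_+(G)$ without citing submanifold theory. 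The paper's route buys brevity. Both arguments are sound.
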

\begin{proof}
Since 
\begin{equation*}
g(\mu)^{-1}=U(\mu)\begin{pmatrix}
1 & & &\\
& {\lambda_{1}(\mu)}& &\\
& & \ddots & \\
& & & {\lambda_{n-1}(\mu)}
\end{pmatrix}U(\mu)^{\ts}=L(\mu)+u_0u_0^{\ts},
\end{equation*}
then 
\begin{equation*}
\nabla_g\mathcal{F}(\rho)=\Big(L(\rho)+u_0u_0^{\ts}\Big)d_\rho\mathcal{F}(\rho).
\end{equation*}
For any $\sigma\in T_\rho\mathcal{P}_+(G)$, notice $\sigma^Tu_0=0$ and $u_0\in \textrm{ker}L(\rho)$, then
\begin{equation*}
\sigma^{\ts}g(\rho)u_0=\sigma^{\ts}(L(\rho)^{\dagger}+u_0u_0^{\ts})u_0=\sigma^{\ts}(L(\rho)^{\dagger}u_0=0.
\end{equation*}
Thus the unit norm vector at $\rho\in\mathcal{P}_+(G)$ is a constant vector $u_0$. The orthogonal projection of $\nabla_g\mathcal{F}(\rho)$ is
\begin{equation*}
\begin{split}
\nabla_g\mathcal{F}(\rho)-\Big(u_0^{\ts}g(\rho)\nabla_g\mathcal{F}(\rho)\Big)u_0=&\Big(L(\rho)+u_0u_0^{\ts}\Big)d_\rho\mathcal{F}(\rho)-u_0u_0^{\ts}d_\rho\mathcal{F}(\rho)\\
=&L(\rho)d_\rho\mathcal{F}(\rho),
\end{split}
\end{equation*}
which finishes the proof.
\end{proof}
We next proceed the geometric computations in $(\mathcal{P}_+(G), g_W)$. We first compute the commutator $[\cdot, \cdot]_W\colon C^{\infty}(T\mathcal{P}_{+}(G))\times  C^{\infty}(T\mathcal{P}_{+}(G))\rightarrow  C^{\infty}(T\mathcal{P}_{+}(G))$.
\begin{proposition}[Commutator]\label{commu}
Given $\sigma_1$, $\sigma_2\in T_\rho\mathcal{P}_+(G)$, then 
\begin{equation*}
[\sigma_1,\sigma_2]_W=0.
\end{equation*}
\end{proposition}
\begin{proof}
Consider $\mathcal{F}\in C^\infty(\mathcal{P}_+(G))$. Since $(\sigma_1\mathcal{F})(\rho):=\frac{d}{dt}|_{t=0}\mathcal{F}(\rho+t\sigma_1)$, then 
\begin{equation*}
\begin{split}
([\sigma_1,\sigma_2]_W\mathcal{F})(\rho)=&\frac{d}{dt}|_{t=0}\frac{d}{ds}|_{s=0}\mathcal{F}(\rho+t\sigma_1+s\sigma_2)-\frac{d}{ds}|_{s=0}\frac{d}{dt}|_{t=0}\mathcal{F}(\rho+t\sigma_1+s\sigma_2)\\
=&\sigma_1^{\ts}d^2_\rho\mathcal{F}(\rho)\sigma_2-\sigma_2^{\ts}d^2_\rho\mathcal{F}(\rho)\sigma_1=0,
\end{split}
\end{equation*}
where $d^2_\rho\mathcal{F}(\rho)=(\frac{\partial^2}{\partial\rho_i\partial\rho_j}\mathcal{F}(\rho))_{1\leq i, j\leq n}$ is the second differential of $\mathcal{F}(\rho)$.
\end{proof}

We next derive the Levi-Civita connection in probability manifold. To do so, we need the following operation. 
Given two vector fields $v$, $\tilde{v}$ on a graph, denote $\cdot\circ\cdot\colon\mathbb{R}^{|E|}\times \mathbb{R}^{|E|}\rightarrow \mathbb{R}^n$ by 
\begin{equation*}
v\circ \tilde{v}:=\frac{1}{2}(\sum_{j\in N(i)}v_{ij}\tilde{v}_{ij})_{i=1}^n\in\mathbb{R}^n.
\end{equation*}

\begin{proposition}[Levi-Civita connection]\label{Levi}
$\nabla^{W}_{\cdot}\cdot\colon C^{\infty}(T_\rho\mathcal{P}_+(G))\times C^{\infty}(T_\rho\mathcal{P}_+(G))\rightarrow C^{\infty}(T_\rho\mathcal{P}_+(G))$ is defined by
\begin{equation*}
\nabla^W_{\sigma_1}\sigma_2=-\frac{1}{2}[L(\sigma_1)L(\rho)^{\dagger}\sigma_2+L(\sigma_2)L(\rho)^{\dagger}\sigma_1]+\frac{1}{2}L(\rho)\Big(\nabla_G L(\rho)^{\dagger}\sigma_1	\circ\nabla_G L(\rho)^{\dagger}\sigma_2\Big),
\end{equation*}
where 
$\big(\nabla_G L(\rho)^{\dagger}\sigma_1	\circ \nabla_G L(\rho)^{\dagger}\sigma_2\big)=\frac{1}{2}\Big(\sum_{j\in N(i)}(\nabla_{ij}L(\rho)^{\dagger}\sigma_1) (\nabla_{ij}L(\rho)^{\dagger}\sigma_2) \Big)_{i=1}^n\in \mathbb{R}^n$.
\end{proposition}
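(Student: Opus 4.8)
The plan is to verify the stated formula through the Koszul formula, exploiting that the coordinate vector fields on $\mathcal{P}_+(G)$ have vanishing brackets. I would extend $\sigma_1,\sigma_2,\sigma_3\in T_\rho\mathcal{P}_+(G)$ to constant vector fields in the ambient Euclidean coordinate $\rho$. By Proposition \ref{commu} all pairwise commutators then vanish, so the Koszul identity collapses to
\[
2\,g_W(\nabla^W_{\sigma_1}\sigma_2,\sigma_3)=\sigma_1\,g_W(\sigma_2,\sigma_3)+\sigma_2\,g_W(\sigma_1,\sigma_3)-\sigma_3\,g_W(\sigma_1,\sigma_2)\ .
\]
Since $g_W$ is nondegenerate on $T_\rho\mathcal{P}_+(G)$ (by the single-zero-eigenvalue statement of Lemma \ref{lemma_hodge}), this determines $\nabla^W_{\sigma_1}\sigma_2$ uniquely, and it remains only to compute the three directional derivatives and then solve for the primal vector.

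The central step is differentiating $g_W(\sigma_2,\sigma_3)=\sigma_2^{\ts}L(\rho)^{\dagger}\sigma_3$ in the direction $\sigma_1$. Rather than expand the Moore--Penrose derivative directly, I would exploit that $L(\rho)L(\rho)^{\dagger}=I-u_0u_0^{\ts}$ is independent of $\rho$ (Lemma \ref{lemma_hodge}). Writing $\phi_i:=L(\rho)^{\dagger}\sigma_i$ and denoting by $D_{\sigma_1}$ the directional derivative in the direction $\sigma_1$, I would combine $\sigma_i=L(\rho)\phi_i$ (valid since $\sigma_i\perp u_0$), the linearity $D_{\sigma_1}L(\rho)=L(\sigma_1)$ (because $\Theta(\cdot)$ is linear), and the differentiated identity $L(\rho)\,D_{\sigma_1}L(\rho)^{\dagger}=-L(\sigma_1)L(\rho)^{\dagger}$, to obtain
\[
\sigma_1\,g_W(\sigma_2,\sigma_3)=\sigma_2^{\ts}\big(D_{\sigma_1}L(\rho)^{\dagger}\big)\sigma_3=\phi_2^{\ts}L(\rho)\big(D_{\sigma_1}L(\rho)^{\dagger}\big)\sigma_3=-\phi_2^{\ts}L(\sigma_1)\phi_3\ ,
\]
and symmetrically for the other two terms. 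This is the cleanest way to sidestep the full pseudo-inverse derivative.

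Substituting into the Koszul identity gives
\[
2\,g_W(\nabla^W_{\sigma_1}\sigma_2,\sigma_3)=-\phi_2^{\ts}L(\sigma_1)\phi_3-\phi_1^{\ts}L(\sigma_2)\phi_3+\phi_1^{\ts}L(\sigma_3)\phi_2\ .
\]
The first two terms pair naturally with $\phi_3=L(\rho)^{\dagger}\sigma_3$. For the last term I would expand $\phi_1^{\ts}L(\sigma_3)\phi_2=\sum_{(i,j)\in\vr{E}}\nabla_{ij}\phi_1\,\nabla_{ij}\phi_2\,\tfrac{\sigma_3(i)+\sigma_3(j)}{2}$ and collect the coefficient of each $\sigma_3(k)$; the skew-symmetry $\nabla_{ij}=-\nabla_{ji}$ merges the incoming and outgoing edges at $k$, identifying this term with $\sigma_3^{\ts}\big(\nabla_G\phi_1\circ\nabla_G\phi_2\big)$. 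Having written the right-hand side as a linear functional of $\sigma_3$ over $u_0^{\perp}$, I would solve for $\nu:=\nabla^W_{\sigma_1}\sigma_2$ by applying $L(\rho)$: using $L(\rho)L(\rho)^{\dagger}=I-u_0u_0^{\ts}$, $L(\rho)u_0=0$, and $u_0^{\ts}L(\sigma_i)=0$ to annihilate all kernel contributions, this recovers exactly
\[
\nu=-\tfrac12\big[L(\sigma_1)L(\rho)^{\dagger}\sigma_2+L(\sigma_2)L(\rho)^{\dagger}\sigma_1\big]+\tfrac12 L(\rho)\big(\nabla_G L(\rho)^{\dagger}\sigma_1\circ\nabla_G L(\rho)^{\dagger}\sigma_2\big)\ ,
\]
and a final verification that $u_0^{\ts}\nu=0$ confirms $\nu\in T_\rho\mathcal{P}_+(G)$.

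The main obstacle is the careful bookkeeping of the kernel direction $u_0$ throughout: one must repeatedly use that the range projection $I-u_0u_0^{\ts}$ is constant, that $L(\sigma)u_0=0$, and that applying $L(\rho)$ both removes the ambiguity modulo $u_0$ and converts the dual-coordinate identity back into the primal vector. The secondary subtlety is the edge-to-vertex re-indexing that produces the $\circ$ operator; it is routine, but is precisely where the quadratic, nonlinear part of the connection originates.
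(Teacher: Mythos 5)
Your proposal is correct and follows essentially the same route as the paper's proof: the Koszul formula with vanishing commutators (Proposition \ref{commu}), a derivative identity for $L(\rho)^{\dagger}$ obtained by exploiting kernel-constancy instead of the general Moore--Penrose derivative, the edge-to-vertex re-indexing that produces the $\circ$ term (the paper's Claim 2), and nondegeneracy of $L(\rho)^{\dagger}$ on $u_0^{\perp}$ to extract the primal vector. The only cosmetic difference is that you differentiate the constant range projection $L(\rho)L(\rho)^{\dagger}=I-u_0u_0^{\ts}$, whereas the paper differentiates $g(\rho)\cdot g(\rho)^{-1}=I$ for the augmented invertible metric $g(\rho)=L(\rho)^{\dagger}+u_0u_0^{\ts}$ of Theorem \ref{embed}; both yield the same Claim 1 identity $\sigma_1\big(\sigma_2^{\ts}L(\rho)^{\dagger}\sigma_3\big)=-\sigma_2^{\ts}L(\rho)^{\dagger}L(\sigma_1)L(\rho)^{\dagger}\sigma_3$.
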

\begin{proof}
Given $\sigma_i\in \mathcal{T}_\rho\mathcal{P}_+(G)$, $i=1,2,3$, by the Koszul formula and Proposition \ref{commu}, we have  
\begin{equation*}
\begin{split}
g_W(\nabla_{\sigma_1}^W\sigma_2, \sigma_3)
=&\frac{1}{2}\Big\{\sigma_1(\sigma_2^{\ts}L(\rho)^{\dagger}\sigma_3)+\sigma_2(\sigma_1^{\ts}L(\rho)^{\dagger}\sigma_3)-\sigma_3(\sigma_1^{\ts}L(\rho)^{\dagger}\sigma_2)\Big\}\\
=&\frac{1}{2}\frac{d}{dt}|_{t=0}\Big\{(\sigma_2^{\ts}L(\rho+t\sigma_1)^{\dagger}\sigma_3+\sigma_1^{\ts}L(\rho+t\sigma_2)^{\dagger}\sigma_3-\sigma_1^{\ts}L(\rho+t\sigma_3)^{\dagger}\sigma_2\Big\}\\
=&-\frac{1}{2}\Big[\sigma_2^{\ts}L(\rho)^{\dagger}L(\sigma_1)L(\rho)^{\dagger}\sigma_3+\sigma_1^{\ts}L(\rho)^{\dagger}L(\sigma_2)L(\rho)^{\dagger}\sigma_3\Big]+\frac{1}{2}\sigma_1^{\ts}L(\rho)^{\dagger}L(\sigma_3)L(\rho)^{\dagger}\sigma_2\\
=&\sigma_3^{\ts}L(\rho)^{\dagger}\Big(-\frac{1}{2}[L(\sigma_1)L(\rho)^{\dagger}\sigma_2+L(\sigma_2)L(\rho)^{\dagger}\sigma_1]+\frac{1}{2}L(\rho)(\nabla_G L(\rho)^{\dagger}\sigma_1\circ \nabla_G L(\rho)^{\dagger}\sigma_2)\Big),
\end{split}
\end{equation*}
where the last two equalities are from the Claim 1 and 2 proved in below. Since $g_W(\nabla_{\sigma_1}^W\sigma_2, \sigma_3)=(\nabla_{\sigma_1}^W\sigma_2)^{\ts}L(\rho)^{\dagger}\sigma_3$, for any $\sigma_3\in \mathcal{P}_+(G)$, the result is proved.

\textbf{Claim 1:}
\begin{equation*}
\frac{d}{dt}|_{t=0}\sigma_2^{\ts}L(\rho+t\sigma_1)^{\dagger}\sigma_3=-\sigma_2^{\ts}L(\rho)^{\dagger}L(\sigma_1)L(\rho)^{\dagger}\sigma_3.
\end{equation*}
\begin{proof}[Proof of Claim 1]
From Lemma \ref{embed}, $\sigma_2^{\ts}L(\rho+t\sigma_1)^{\dagger}\sigma_3=\sigma_2^{\ts}g(\rho+t\sigma_1)\sigma_3$. Since $L(\rho)=-D^{\ts}\Theta(\rho)D$ is linear on $\rho$, we can show that
\begin{equation}\label{claim1}
\frac{d}{dt}g(\rho+t\sigma_1)=-g(\rho+t\sigma_1)L(\sigma_1)g(\rho+t\sigma_1).
\end{equation}
This is true since 
\begin{equation}\label{claim1_step}
\begin{split}
\frac{d}{dt}g(\rho+t\sigma_1)=&-g(\rho+t\sigma_1)\cdot\frac{d}{dt}g(\rho+t\sigma_1)^{\dagger}\cdot g(\rho+t\sigma_1)\\
=&-g(\rho+t\sigma_1)\cdot\frac{d}{dt}[L(\rho+t\sigma_1)+u_0u_0^{\ts}]\cdot g(\rho+t\sigma_1)\\ 
=&-g(\rho+t\sigma_1)\cdot L(\sigma_1)\cdot g(\rho+t\sigma_1),
\end{split}
\end{equation}
where the first equality holds since 
\begin{equation*}
0=\frac{d}{dt}\mathbb{I}=\frac{d}{dt}(g(\rho+t\sigma_1)\cdot g(\rho+t\sigma_1)^{-1})=\frac{d}{dt}g(\rho+t\sigma_1)\cdot g(\rho+t\sigma_1)^{-1}+g(\rho+t\sigma_1)\cdot \frac{d}{dt}g(\rho+t\sigma_1)^{-1},
\end{equation*}
and the second equality is true because the element in matrix function $L(\cdot)$ is linear.
Hence from \eqref{claim1},
\begin{equation*}
\begin{split}
\frac{d}{dt}\sigma_2^{\ts}L(\rho+t\sigma_1)^{\dagger}\sigma_3=&\sigma_2^{\ts}\frac{d}{dt}|_{t=0}g(\rho+t\sigma_1)\sigma_3\\
=&-(g(\rho)\sigma_2)^{\ts}\cdot L(\sigma_1)\cdot g(\rho)\sigma_3.
\end{split}
\end{equation*}
For any $\sigma\in T_\rho\mathcal{P}_+(G)$, then
\begin{equation*}
g(\rho)\sigma=L(\rho)^{\dagger}\sigma+u_0u_0^{\ts}\sigma=L(\rho)^{\dagger}\sigma+(\frac{1}{n}\sum_{i=1}^n\sigma_i)u_0=L(\rho)^{\dagger}\sigma.
\end{equation*}
Putting this in \eqref{claim1_step}, we prove the claim 1.
\end{proof}
\textbf{Claim 2:}
\begin{equation*}
\sigma_1^{\ts}L(\rho)^{\dagger}L(\sigma_3)L(\rho)^{\dagger}\sigma_2=\sigma_3^{\ts}L(\rho)^{\dagger}L(\rho)\Big(\nabla_G L(\rho)^{\dagger}\sigma_1\circ \nabla_G L(\rho)^{\dagger}\sigma_2\big).
\end{equation*}
\begin{proof}[Proof of Claim 2]
Since $L(\sigma_3)=D^{\ts}\Theta(\sigma_3)D$, then
\begin{equation*}
\begin{split}
&\sigma_1^{\ts}L(\rho)^{\dagger}L(\sigma_3)L(\rho)^{\dagger}\sigma_2\\
=&(DL(\rho)^{\dagger}\sigma_1)^{\ts}\Theta(\sigma_3)DL(\rho)^{\dagger}\sigma_2\\
=&\frac{1}{2}\sum_{(i,j)\in E}(\nabla_{ij}L(\rho)^{\dagger}\sigma_1)(\nabla_{ij}L(\rho)^{\dagger}\sigma_2)\frac{\sigma_3(i)+\sigma_3(j)}{2}\\
=&\frac{1}{2}\sum_{i=1}^n\sigma_2(i)\frac{1}{2}\sum_{j\in N(i)}(\nabla_{ij}L(\rho)^{\dagger}\sigma_1)(\nabla_{ij}L(\rho)^{\dagger}\sigma_2)+\frac{1}{2}\sum_{j=1}^n\sigma_3(j)\frac{1}{2}\sum_{i\in N(j)}(\nabla_{ij}L(\rho)^{\dagger}\sigma_1)(\nabla_{ij}L(\rho)^{\dagger}\sigma_2)\\
=&\frac{1}{4}\sigma_3^{\ts}\Big(\sum_{j\in N(i)}(\nabla_{ij} L(\rho)^{\dagger}\sigma_1)(\nabla_{ij} L(\rho)^{\dagger}\sigma_2)\Big)_{i=1}^n+\frac{1}{4}\sigma_3^{\ts}\Big(\sum_{i\in N(j)}(\nabla_{ij} L(\rho)^{\dagger}\sigma_1)(\nabla_{ij} L(\rho)^{\dagger}\sigma_2)\Big)_{j=1}^n\\
=&\frac{1}{2}\sigma_3^{\ts}\Big(\sum_{j\in N(i)}(\nabla_{ij} L(\rho)^{\dagger}\sigma_1)(\nabla_{ij} L(\rho)^{\dagger}\sigma_2)\Big)_{i=1}^n.
\end{split}
\end{equation*}
where the second last equality holds by relabeling $i$ and $j$, i.e.
\begin{equation*}
\Big(\sum_{j\in N(i)}(\nabla_{ij} L(\rho)^{\dagger}\sigma_1)(\nabla_{ij} L(\rho)^{\dagger}\sigma_2)\Big)_{i=1}^n=\Big(\sum_{i\in N(j)}(\nabla_{ij} L(\rho)^{\dagger}\sigma_1)(\nabla_{ij} L(\rho)^{\dagger}\sigma_2)\Big)_{j=1}^n.
\end{equation*}
Since $(\nabla_GL(\rho)^{\dagger}\sigma_1\circ \nabla_GL(\rho)^{\dagger}\sigma_2)=\frac{1}{2}\Big(\sum_{j\in N(i)}\nabla_{ij} L(\rho)^{\dagger}\sigma_1\cdot\nabla_{ij} L(\rho)^{\dagger}\sigma_2\Big)_{i=1}^n\in \mathbb{R}^n$, then
\begin{equation*}
\begin{split}
&\frac{1}{2}\sigma_3^{\ts}\Big(\sum_{j\in N(i)}\nabla_{ij} L(\rho)^{\dagger}\sigma_1\nabla_{ij} L(\rho)^{\dagger}\sigma_2\Big)_{i=1}^n\\
=&\sigma_3^{\ts}g(\rho)g(\rho)^{-1}(\nabla_GL(\rho)^{\dagger}\sigma_1\circ \nabla_GL(\rho)^{\dagger}\sigma_2)\\
=&\sigma_3^{\ts}(L(\rho)+u_0u_0^{\ts})(L(\rho)^{\dagger}+u_0u_0^{\ts})(\nabla_GL(\rho)^{\dagger}\sigma_1\circ \nabla_GL(\rho)^{\dagger}\sigma_2)\\
=&\sigma_3^{\ts}L(\rho)L(\rho)^{\dagger}(\nabla_GL(\rho)^{\dagger}\sigma_1\circ \nabla_GL(\rho)^{\dagger}\sigma_2),
\end{split}
\end{equation*}
which finishes the proof.
\end{proof}
\end{proof}

By the above derived Levi-Civita connection, we introduce the Christoffel symbol of $(\mathcal{P}_+(G), g_W)$ by the standard Euclidean basis, i.e. $(\frac{\partial}{\partial \rho_1},\cdots, \frac{\partial}{\partial\rho_n})$. 
\begin{proposition}[Christoffel symbol] 
Let $\Gamma^{W,k}=(\Gamma^{W,k}_{ij})_{1\leq i,j\leq n}\in \mathbb{R}^{n\times n}$, such that 
\begin{equation*}
\sum_{1\leq i,j\leq n}\Gamma_{ij}^{W,k}\sigma_1(i)\sigma_2(j)=(\nabla_{\sigma_1}^W\sigma_2)(k).
\end{equation*}
 In other words, 
 \begin{equation*}
\begin{split}
\Gamma_{ij}^{W,k}=&\frac{1}{4}\sum_{k'\in N(k)}\omega_{kk'}\Big\{(L_{ki}^{\dd}-L_{k'i}^{\dd})(\delta_{jk}+\delta_{jk'})+(L_{kj}^{\dd}-L_{k'j}^{\dd})(\delta_{ik}+\delta_{ik'}) \\
&\hspace{2cm}+\theta_{kk'}\sum_{k''\in N(k)}\omega_{kk''}(L_{ki}^{\dd}-L_{k''i}^{\dd})(L_{kj}^{\dd}-L_{k''j}^{\dd})\\
&\hspace{2cm}-\theta_{kk'}\sum_{k'''\in N(k')}\omega_{k'k'''}(L_{k'i}^{\dd}-L_{k'''i}^{\dd})(L_{k'j}^{\dd}-L_{k'''j}^{\dd})\Big\},
\end{split}
\end{equation*}
where $L(\rho)^{\dagger}=(L^{\dagger}_{ij})_{1\leq i,j\leq n}$ and $\delta_{ij}=\begin{cases} 1 & \textrm{if $i=j$}\\
0& \textrm{if $i\neq j$}
\end{cases}
$. 
Here we use the neighborhood relation on graph by $$k''-k-k'-k'''.$$
\end{proposition}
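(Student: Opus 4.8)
The plan is to read the Christoffel symbols directly off the intrinsic connection formula of Proposition \ref{Levi}. Since $\Gamma^{W,k}_{ij}$ is by definition the coefficient of $\sigma_1(i)\sigma_2(j)$ in the $k$-th component of $\nabla^W_{\sigma_1}\sigma_2$, and since
\[
\nabla_{\sigma_1}^W\sigma_2 = -\tfrac12\big[L(\sigma_1)L(\rho)^{\dagger}\sigma_2 + L(\sigma_2)L(\rho)^{\dagger}\sigma_1\big] + \tfrac12 L(\rho)\big(\nabla_G L(\rho)^{\dagger}\sigma_1\circ\nabla_G L(\rho)^{\dagger}\sigma_2\big)
\]
is manifestly bilinear in $(\sigma_1,\sigma_2)$, the entire task is to expand each of the three summands in the standard Euclidean basis. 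I expect the two ``drift'' terms to produce the $\delta$-part of the statement (the first bracketed line) and the term $\tfrac12 L(\rho)(\cdots)$ to produce the two $\theta_{kk'}$-weighted double sums. Throughout I abbreviate $L^{\dagger}:=L(\rho)^{\dagger}$ with entries $L^{\dagger}_{ij}$.

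The one computational tool needed is the explicit action of a weighted Laplacian, which follows at once from $L(a)=D^{\ts}\Theta(a)D$ and the definitions of $D$ and $\Theta$: for any $a,\psi\in\mathbb{R}^n$,
\[
\big(L(a)\psi\big)_k = \sum_{k'\in N(k)}\omega_{kk'}\,\tfrac{a_k+a_{k'}}{2}\,(\psi_k-\psi_{k'}).
\]
First I would apply this with $a=\sigma_1$ and $\psi=L^{\dagger}\sigma_2$. Writing $\psi_k-\psi_{k'}=\sum_j(L^{\dagger}_{kj}-L^{\dagger}_{k'j})\sigma_2(j)$ and $\tfrac{\sigma_1(k)+\sigma_1(k')}{2}=\tfrac12\sum_i(\delta_{ik}+\delta_{ik'})\sigma_1(i)$ turns $(L(\sigma_1)L^{\dagger}\sigma_2)_k$ into a double sum over $i,j$ whose $\sigma_1(i)\sigma_2(j)$-coefficient is a sum over $k'\in N(k)$ of $\omega_{kk'}(\delta_{ik}+\delta_{ik'})(L^{\dagger}_{kj}-L^{\dagger}_{k'j})$, up to the numerical constant. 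The companion term $L(\sigma_2)L^{\dagger}\sigma_1$ is identical with $i$ and $j$ interchanged; adding the two with the shared prefactor $-\tfrac12$, and using $-(L^{\dagger}_{kj}-L^{\dagger}_{k'j})=(L^{\dagger}_{k'j}-L^{\dagger}_{kj})$, yields the symmetric $\delta$-contribution appearing in the first line of the claim.

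Next I would handle the curvature-type term in two layers. Setting $\psi^{(1)}=L^{\dagger}\sigma_1$, $\psi^{(2)}=L^{\dagger}\sigma_2$ and expanding the inner-product field,
\[
w_m:=\big(\nabla_G L^{\dagger}\sigma_1\circ\nabla_G L^{\dagger}\sigma_2\big)_m = \tfrac12\sum_{m'\in N(m)}\omega_{mm'}(\psi^{(1)}_m-\psi^{(1)}_{m'})(\psi^{(2)}_m-\psi^{(2)}_{m'}),
\]
I then apply the Laplacian identity a second time to get $(L(\rho)w)_k=\sum_{k'\in N(k)}\omega_{kk'}\theta_{kk'}(w_k-w_{k'})$ with $\theta_{kk'}=\tfrac{\rho_k+\rho_{k'}}{2}$. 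Substituting $\psi^{(1)}_p-\psi^{(1)}_q=\sum_i(L^{\dagger}_{pi}-L^{\dagger}_{qi})\sigma_1(i)$ and the analogue for $\psi^{(2)}$, the contribution $w_k$ becomes a sum over $k''\in N(k)$ while $w_{k'}$ becomes a sum over $k'''\in N(k')$; reading off the coefficient of $\sigma_1(i)\sigma_2(j)$ then reproduces the two $\theta_{kk'}$-weighted double sums of the statement, the minus sign on the $N(k')$ sum originating from the $-w_{k'}$.

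The only genuinely delicate part is the index bookkeeping in this last step. One must keep the two nested neighbor indices $k''\in N(k)$ and $k'''\in N(k')$ strictly separate, confirm that $w_k$ and $w_{k'}$ each carry the correctly symmetric factor $(L^{\dagger}_{\bullet i}-L^{\dagger}_{\bullet\bullet i})(L^{\dagger}_{\bullet j}-L^{\dagger}_{\bullet\bullet j})$, and track the single overall numerical constant so that the $\delta$-part and the $\theta$-part can be collected under one common prefactor $\tfrac14\sum_{k'\in N(k)}\omega_{kk'}$. A final sanity check is that the resulting coefficient array is symmetric in $i,j$, as it must be for the torsion-free Levi-Civita connection (Proposition \ref{commu}); this symmetry is built into the $\theta$-terms individually and is produced by the $i\leftrightarrow j$ summation in the drift terms, and it identifies the neighborhood pattern $k''-k-k'-k'''$ as the index diagram of the two curvature sums.
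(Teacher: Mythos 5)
Your proposal is correct and is essentially the paper's own proof: the paper expands the same three terms of Proposition \ref{Levi} (labelled (C1), (C2), (C3) there) via the nodewise action of the weighted Laplacian, substitutes $\Phi_i=L(\rho)^{\dagger}\sigma_i$ and $\frac{\sigma(k)+\sigma(k')}{2}=\frac{1}{2}\sum_i(\delta_{ik}+\delta_{ik'})\sigma(i)$, and reads off the coefficient of $\sigma_1(i)\sigma_2(j)$ exactly as you describe, with the two drift terms giving the $\delta$-part and the doubly expanded term $\frac{1}{2}L(\rho)(\nabla_G L(\rho)^{\dagger}\sigma_1\circ\nabla_G L(\rho)^{\dagger}\sigma_2)$ giving the two $\theta_{kk'}$-weighted sums over $k''\in N(k)$ and $k'''\in N(k')$. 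One caveat on your final matching step: executed faithfully, your expansion (and the paper's own computation) gives the drift contribution $\frac{1}{4}\sum_{k'\in N(k)}\omega_{kk'}(L^{\dagger}_{k'j}-L^{\dagger}_{kj})(\delta_{ik}+\delta_{ik'})$ plus its $i\leftrightarrow j$ twin, \emph{without} the extra factors $\frac{1}{2}$ that the stated formula attaches to the $\delta$-terms, so the printed statement differs from what the computation actually yields by a factor of $2$ on those terms --- an inconsistency internal to the paper rather than a flaw in your approach.
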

\begin{proof}
Denote $\Phi_i:=L(\rho)^{\dd}\sigma_i\in \mathbb{R}^n$, $i=1,2$, i.e. $\Phi_i(k)=\sum_{j=1}^nL_{kj}^{\dd}\sigma_j$. And 
\begin{equation}\label{Phi12}
\Phi_{12}:=\frac{1}{2}(\sum_{k'\in N(k)}\nabla_{kk'}\Phi_1\nabla_{kk'}\Phi_2)_{k=1}^n\in \mathbb{R}^n.
\end{equation}
From Proposition \ref{Levi}, 
\begin{equation*}
\begin{split}
\nabla^W_{\sigma_1}\sigma_2(k)=&-\frac{1}{2}\big[L(\sigma_1)L(\rho)^{\dd}\sigma_2+L(\sigma_2)L(\rho)^{\dd}\sigma_1-L(\rho)(\nabla_GL(\rho)^{\dd}\sigma_1\circ \nabla_GL(\rho)^{\dd}\sigma_2)\big](k)\\
=&-\frac{1}{2}\sum_{k'\in N(k)}(\Phi_2(k)-\Phi_2(k'))\frac{\Phi_1(k)+\Phi_1(k')}{2} \hspace{0.8cm} (C1)\\
&-\frac{1}{2}\sum_{k'\in N(k)}(\Phi_1(k)-\Phi_1(k'))\frac{\Phi_2(k)+\Phi_2(k')}{2} \hspace{0.8cm} (C2)\\
&+\frac{1}{2}\sum_{k'\in N(k)}(\Phi_{12}(k)-\Phi_{12}(k'))\theta_{kk'}\Big\}\hspace{2.2cm} (C3)
\end{split}
\end{equation*}
We next compute (C1), (C2), (C3). Here 
\begin{equation*}
\begin{split}
(C1)=&-\frac{1}{2}\sum_{k'\in N(k)}\omega_{kk'}\sum_{j=1}^n(L_{kj}^{\dd}-L_{k'j}^{\dd})\sigma_2(j)\frac{\sigma_1(k)+\sigma_1(k')}{2} \\
=&\frac{1}{4}\sum_{k'\in N(k)}\omega_{kk'}\sum_{j=1}^n(L_{k'j}^{\dd}-L_{kj}^{\dd})\sigma_2(j)\sum_{i=1}^n(\delta_{ki}+\delta_{k'i})\sigma_1(i)\\
=&\frac{1}{4}\sum_{i=1}^n\sum_{j=1}^n\sum_{k'\in N(k)}\omega_{kk'}(L_{k'j}^{\dd}-L_{kj}^{\dd})(\delta_{ki}+\delta_{k'i})\sigma_1(i)\sigma_2(j).
\end{split}
\end{equation*}
Similarly, 
\begin{equation*}
(C2)=\frac{1}{4}\sum_{i=1}^n\sum_{j=1}^n\sum_{k'\in N(k)}\omega_{kk'}(L_{k'i}^{\dd}-L_{ki}^{\dd})(\delta_{kj}+\delta_{k'j})\sigma_1(i)\sigma_2(j).
\end{equation*}
And recall $\Phi_{12}$ defined in \eqref{Phi12}, then
\begin{equation*}
\begin{split}
(C3)=&\frac{1}{2}\sum_{k'\in N(k)}\theta_{kk'}\omega_{kk'}(\Phi_{12}(k)-\Phi_{12}(k'))\\
=&\frac{1}{4}\sum_{k'\in N(k)}\theta_{kk'}\omega_{kk'}\Big\{ \sum_{k''\in N(k)}\omega_{kk''}(\Phi_1(k)-\Phi_1(k''))(\Phi_2(k)-\Phi_2(k'')) \\
&\hspace{2.5cm}- \sum_{k'''\in N(k')}\omega_{k'k'''}(\Phi_1(k')-\Phi_1(k'''))(\Phi_2(k')-\Phi_2(k''')) \Big\}\\
=&\frac{1}{4}\sum_{k'\in N(k)}\theta_{kk'}\omega_{kk'}\Big\{ \sum_{k''\in N(k)}\omega_{kk''}\sum_{j=1}^n(L_{jk}^{\dd}-L_{jk''}^{\dd})\sigma_1(j)\sum_{i=1}^n(L_{ik}^{\dd}-L_{ik''}^{\dd})\sigma_2(i) \\
&\hspace{2.5cm}- \sum_{k'''\in N(k')}\omega_{k'k'''}\sum_{j=1}^n(L_{jk'}^{\dd}-L_{jk'''}^{\dd})\sigma_1(j)\sum_{i=1}^n(L_{ik'}^{dd}-L_{ik'''}^{\dd})\sigma_2(i)\Big\}\\
=&\frac{1}{4}\sum_{i=1}^n\sum_{j=1}^n\sum_{k'\in N(k)}\theta_{kk'}\omega_{kk'}\Big\{ \sum_{k''\in N(k)}\omega_{kk''}(L_{jk}^{\dd}-L_{jk''}^{\dd})(L_{ik}^{\dd}-L_{ik''}^{\dd}) \\
&\hspace{3.8cm}- \sum_{k'''\in N(k')}\omega_{k'k'''}(L_{jk'}^{\dd}-L_{jk'''}^{\dd})(L_{ik'}^{\dd}-L_{ik'''}^{\dd})\Big\}\sigma_1(j)\sigma_2(i).
\end{split}
\end{equation*}
From $\sum_{1\leq i,j\leq n}\Gamma_{ij}^{W,k}\sigma_1(i)\sigma_2(j)=(\nabla_{\sigma_1}^W\sigma_2)(k)=(C1)+(C2)+(C3)$, we prove the result.

\end{proof}

\begin{proposition}[Parallel transport]
Denote $\rho\colon (a,b)\rightarrow \mathcal{P}_+(G)$ be a smooth curve. Consider $\sigma(t)\in \mathcal{P}_+(G)$ be a vector field along curve $\rho$, then the equation for $\sigma(t)$ to be parallel along $\rho(t)$ is
\begin{equation*}
\dot\sigma-\frac{1}{2}(L(\sigma)L(\rho)^{\dagger}\dot\rho+L(\dot\rho)L(\rho)^{\dagger}\sigma)+\frac{1}{2}L(\rho)\Big(\nabla_GL(\rho)^{\dagger}\dot\rho\circ\nabla_GL(\rho)^{\dagger}\sigma\Big)=0.
\end{equation*}
The geodesic equation satisfies
\begin{equation}\label{geo}
\ddot\rho-L(\dot\rho)L(\rho)^{\dagger}\dot\rho+\frac{1}{2}L(\rho)\big(\nabla_GL(\rho)^{\dagger}\dot\rho\circ\nabla_GL(\rho)^{\dagger}\dot\rho\big)=0.
\end{equation}
\end{proposition}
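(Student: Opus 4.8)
The plan is to reduce both equations to the Levi-Civita connection already computed in Proposition \ref{Levi}, exploiting that we work in the ambient Euclidean chart $(\rho_1,\dots,\rho_n)$. Recall the standard characterization: a vector field $\sigma(t)$ along $\rho(t)$ is parallel exactly when its covariant derivative along the curve vanishes. Since $\mathcal{P}_+(G)$ is an open subset of the affine hyperplane $\{\sum_i\rho_i=1\}$, the coordinate frame $\{\partial_{\rho_j}\}$ restricted to this hyperplane is a (redundant but) constant frame. Writing $\sigma(t)=\sum_j\sigma(j)(t)\,\partial_{\rho_j}$, the covariant derivative therefore decomposes as
\begin{equation*}
\frac{D\sigma}{dt}=\dot\sigma+\nabla^W_{\dot\rho}\sigma\ ,
\end{equation*}
where $\dot\sigma$ is the ordinary componentwise time derivative, accounting for the variation of the components in the flat frame, and $\nabla^W_{\dot\rho}\sigma$ supplies the connection correction. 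Setting this to zero gives the parallel transport equation.

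First I would justify this decomposition. The formula of Proposition \ref{Levi} is tensorial in both slots; on constant coordinate vectors it reproduces exactly the coefficients $\Gamma^{W,k}_{ij}$ through the defining relation $\sum_{i,j}\Gamma^{W,k}_{ij}\sigma_1(i)\sigma_2(j)=(\nabla^W_{\sigma_1}\sigma_2)(k)$. Consequently the Christoffel term $\sum_{i,j}\Gamma^{W,k}_{ij}\dot\rho(i)\sigma(j)$ along the curve depends only on the instantaneous values $\dot\rho(t)$ and $\sigma(t)$, and may be evaluated pointwise by substituting $\sigma_1=\dot\rho$, $\sigma_2=\sigma$ directly into Proposition \ref{Levi}. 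This yields
\begin{equation*}
\nabla^W_{\dot\rho}\sigma=-\frac{1}{2}\big[L(\dot\rho)L(\rho)^{\dagger}\sigma+L(\sigma)L(\rho)^{\dagger}\dot\rho\big]+\frac{1}{2}L(\rho)\big(\nabla_GL(\rho)^{\dagger}\dot\rho\circ\nabla_GL(\rho)^{\dagger}\sigma\big)\ ,
\end{equation*}
and imposing $\dot\sigma+\nabla^W_{\dot\rho}\sigma=0$ gives the stated parallel transport equation verbatim.

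For the geodesic equation I would use that $\rho(t)$ is a geodesic precisely when its velocity $\dot\rho$ is parallel along itself, i.e. $\frac{D\dot\rho}{dt}=0$. Specializing the parallel transport equation to the diagonal $\sigma=\dot\rho$ collapses the symmetric bracket $\frac12\big[L(\dot\rho)L(\rho)^{\dagger}\dot\rho+L(\dot\rho)L(\rho)^{\dagger}\dot\rho\big]$ into the single term $L(\dot\rho)L(\rho)^{\dagger}\dot\rho$, and replaces $\dot\sigma$ by $\ddot\rho$, producing exactly \eqref{geo}.

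The main obstacle here is conceptual rather than computational: one must argue cleanly that $\frac{D\sigma}{dt}=\dot\sigma+\nabla^W_{\dot\rho}\sigma$ in this global chart, i.e. that the ordinary derivative $\dot\sigma$ captures the component variation while the connection contributes only the Christoffel correction evaluated at the instantaneous tangent vectors. The subtlety is that $\{\partial_{\rho_j}\}_{j=1}^n$ is an overcomplete spanning set for the $(n-1)$-dimensional tangent space, so one should check that the identity is consistent on $T_\rho\mathcal{P}_+(G)$, which it is because $\dot\rho,\sigma\in T_\rho\mathcal{P}_+(G)$ satisfy $\sum_i\dot\rho(i)=\sum_i\sigma(i)=0$ and Proposition \ref{Levi} already returns a vector in $T_\rho\mathcal{P}_+(G)$. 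Once this identification is secured, everything else reduces to substitution into Proposition \ref{Levi}, with the geodesic case following by the diagonal specialization $\sigma=\dot\rho$.
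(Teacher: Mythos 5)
Your proposal is correct and follows essentially the same route as the paper: the paper likewise writes the covariant derivative along the curve in the Euclidean chart as $\dot\sigma_k+\sum_{1\leq i,j\leq n}\Gamma^{W,k}_{ij}\sigma_i\dot\rho_j=0$ (the Christoffel symbols being defined precisely by Proposition \ref{Levi}) and then obtains the geodesic equation by the diagonal specialization $\sigma=\dot\rho$. Your additional remark justifying the decomposition $\frac{D\sigma}{dt}=\dot\sigma+\nabla^W_{\dot\rho}\sigma$ in the overcomplete coordinate frame is a sound elaboration of a step the paper leaves implicit, not a different argument.
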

\begin{proof}
The parallel equation is derived by computing $\nabla_{\dot\rho(t)}^W\sigma(t)=\Big(\dot\sigma_k+\sum_{1\leq i,j\leq n}\Gamma_{ij}^{W,k}\sigma_i\dot\rho_j\Big)_{k=1}^n=0$. 
Let $\sigma(t)=\dot\rho(t)$, then the geodesic is derived by setting $\nabla^W_{\dot\rho(t)}\dot\rho(t)=(\ddot\rho_k+\sum_{1\leq i,j\leq n}\Gamma_{ij}^{W,k}\dot\rho_i\dot\rho_j)_{k=1}^n=0$.
\end{proof}

We are ready to give the curvature tensor, $R_{W}(\cdot, \cdot)\cdot\colon C^{\infty}(T\mathcal{P}_+(G))\times  C^{\infty}(T\mathcal{P}_+(G))\times  C^{\infty}(T\mathcal{P}_+(G))\rightarrow  C^{\infty}(T\mathcal{P}_+(G))$.
\begin{proposition}[Curvature tensor]
Given $\sigma_1$, $\sigma_2$, $\sigma_3$, $\sigma_4\in T_\rho\mathcal{P}_+(G)$, then 
\begin{equation*}
\begin{split}
&g_W(R^W(\sigma_1,\sigma_2)\sigma_3, \sigma_4)\\
=&\frac{1}{4}\Big\{\sigma_2^{\ts}L(\rho)^{\dagger} L(m(\sigma_1, \sigma_3))L(\rho)^{\dagger}\sigma_4 + \sigma_1^{\ts}L(\rho)^{\dagger}L(m(\sigma_2, \sigma_4))L(\rho)^{\dagger}\sigma_3\\
&-\sigma_2^{\ts}L(\rho)^{\dagger} L(m(\sigma_1, \sigma_4))L(\rho)^{\dagger}\sigma_3- \sigma_1^{\ts}L(\rho)^{\dagger} L(m(\sigma_2, \sigma_3))L(\rho)^{\dagger}\sigma_4\\
&+2 n(\sigma_1, \sigma_2)^{\ts}L(\rho)^{\dagger}n(\sigma_3,\sigma_4)+n(\sigma_1,\sigma_3)^{\ts}L(\rho)^{\dagger}n(\sigma_2,\sigma_4)-n(\sigma_2,\sigma_3)^{\ts}L(\rho)^{\dagger}n(\sigma_1,\sigma_4)\Big\},
\end{split}
\end{equation*}
where $m$, $n\colon T_\rho\mathcal{P}_+(G)\times T_\rho\mathcal{P}_+(G)\rightarrow T_\rho\mathcal{P}_+(G)$ are symmetric, antisymmetric operators defined by
\begin{equation*}
m(\sigma_a, \sigma_b):=-[L(\sigma_a)L(\rho)^{\dagger}\sigma_b+L(\sigma_b)L(\rho)^{\dagger}\sigma_a ]+\frac{1}{2}L(\rho)(\nabla_GL(\rho)^{\dagger}\sigma_a\circ\nabla_GL(\rho)^{\dagger}\sigma_b),
\end{equation*}
and 
\begin{equation*}
n(\sigma_a, \sigma_b):=L(\sigma_a)L(\rho)^{\dagger}\sigma_b-L(\sigma_b)L(\rho)^{\dagger}\sigma_a.
\end{equation*}
\end{proposition}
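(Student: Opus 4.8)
The plan is to start from the definition of the Riemann tensor, $R_W(\sigma_1,\sigma_2)\sigma_3=\nabla^W_{\sigma_1}\nabla^W_{\sigma_2}\sigma_3-\nabla^W_{\sigma_2}\nabla^W_{\sigma_1}\sigma_3-\nabla^W_{[\sigma_1,\sigma_2]_W}\sigma_3$, and to pair the outcome with $\sigma_4$ through $g_W$. Because $\sigma_1,\dots,\sigma_4$ are constant vectors in the Euclidean coordinates, Proposition~\ref{commu} gives $[\sigma_1,\sigma_2]_W=0$, so the third term drops out. To make sense of the iterated derivatives I would first extend the connection of Proposition~\ref{Levi} from constant fields to a $\rho$-dependent field $X$ by $\nabla^W_{\sigma_1}X=D_{\sigma_1}X+m(\sigma_1,X)$, where $D_{\sigma_1}X=\frac{d}{dt}\big|_{t=0}X(\rho+t\sigma_1)$ is the Euclidean directional derivative and $m(\sigma_1,X)$ is the tensorial Christoffel contraction; this is consistent with Proposition~\ref{Levi}, since for constant fields $D$ vanishes and $\nabla^W_{\sigma_a}\sigma_b=m(\sigma_a,\sigma_b)$ is the pure Christoffel term. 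Iterating once gives $\nabla^W_{\sigma_1}\nabla^W_{\sigma_2}\sigma_3=D_{\sigma_1}m(\sigma_2,\sigma_3)+m(\sigma_1,m(\sigma_2,\sigma_3))$, so that $g_W(R_W(\sigma_1,\sigma_2)\sigma_3,\sigma_4)$ is the antisymmetrization in $(\sigma_1,\sigma_2)$ of $g_W(D_{\sigma_1}m(\sigma_2,\sigma_3),\sigma_4)+g_W(m(\sigma_1,m(\sigma_2,\sigma_3)),\sigma_4)$.

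The computation then rests on four ingredients already available in the excerpt. First, $L(\cdot)$ is linear in its argument, so $D_{\sigma_1}L(\rho)=L(\sigma_1)$. Second, the vector form of Claim~1 in the proof of Proposition~\ref{Levi}, namely $D_{\sigma_1}\big(L(\rho)^{\dagger}\sigma\big)=-L(\rho)^{\dagger}L(\sigma_1)L(\rho)^{\dagger}\sigma$ for $\sigma\in T_\rho\mathcal{P}_+(G)$, which follows from $\frac{d}{dt}g(\rho+t\sigma_1)=-g L(\sigma_1) g$ together with $L(\sigma_1)u_0=0$. Third, the projection identities $L(\rho)L(\rho)^{\dagger}\sigma=\sigma$ for tangent $\sigma$, together with $L(\rho)^{\dagger}u_0=0$ and $\sigma_i^{\ts}u_0=0$. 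Fourth, the reciprocity relation $\Phi_a^{\ts}L(v)\Phi_b=(\nabla_G\Phi_a\circ\nabla_G\Phi_b)^{\ts}v$ with $\Phi_a=L(\rho)^{\dagger}\sigma_a$, which is exactly the edge-sum manipulation carried out in Claim~2.

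Writing $m(\sigma_2,\sigma_3)=-\tfrac12[L(\sigma_2)\Phi_3+L(\sigma_3)\Phi_2]+\tfrac12 L(\rho)(\nabla_G\Phi_2\circ\nabla_G\Phi_3)$, I would differentiate term by term with the rules above, pair with $\Phi_4=L(\rho)^{\dagger}\sigma_4$, and likewise expand $m(\sigma_1,m(\sigma_2,\sigma_3))$. Guided by the classical shape $R=\partial\Gamma-\partial\Gamma+\Gamma\Gamma-\Gamma\Gamma$, I expect the terms in which a derivative has fallen on the metric to reassemble, via the reciprocity relation, into $L$ evaluated at a Christoffel vector, i.e. into the first group $\sigma_a^{\ts}L(\rho)^{\dagger}L(m(\sigma_b,\sigma_c))L(\rho)^{\dagger}\sigma_d$; while the genuinely bilinear-in-Christoffel contributions, after the antisymmetrization in $(\sigma_1,\sigma_2)$ annihilates every symmetric piece $L(\sigma_a)\Phi_b+L(\sigma_b)\Phi_a$, should collapse onto the antisymmetric combinations $n(\sigma_a,\sigma_b)=L(\sigma_a)\Phi_b-L(\sigma_b)\Phi_a$ paired through $L(\rho)^{\dagger}$, producing the second group with coefficients $2,1,-1$.

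The hard part will be the bookkeeping of the cubic-in-$\Phi$ terms and the tracking of coefficients. Differentiating the quadratic piece $\tfrac12 L(\rho)(\nabla_G\Phi_2\circ\nabla_G\Phi_3)$ creates one contribution from $D_{\sigma_1}L(\rho)=L(\sigma_1)$ and two from the inner $\Phi$'s, and these must be shown to merge exactly with the quadratic-times-quadratic part of $m(\sigma_1,m(\sigma_2,\sigma_3))$ and with the term produced when the derivative lands on the outer $L(\rho)^{\dagger}$ in the pairing with $\sigma_4$. Verifying that all of these cubic pieces telescope into the single clean $n^{\ts}L(\rho)^{\dagger}n$ group with the stated coefficients, that the derivative-of-metric pieces likewise close up into the $L(m(\cdot,\cdot))$ group, and that every stray $u_0$-direction term is killed by $\sigma_i^{\ts}u_0=0$, is the delicate core of the proof; the symmetries of $R_W$ (antisymmetry in $(\sigma_1,\sigma_2)$ and in $(\sigma_3,\sigma_4)$, and the pair symmetry $g_W(R_W(\sigma_1,\sigma_2)\sigma_3,\sigma_4)=g_W(R_W(\sigma_3,\sigma_4)\sigma_1,\sigma_2)$) furnish useful consistency checks on the final expression.
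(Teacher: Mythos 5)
Your setup is sound, and it is organized genuinely differently from the paper's proof. You propose to compute $R_W$ directly in the form $\partial\Gamma-\partial\Gamma+\Gamma\Gamma-\Gamma\Gamma$, after extending the connection to $\rho$-dependent fields by $\nabla^W_{\sigma_1}X=D_{\sigma_1}X+m(\sigma_1,X)$. The paper instead sidesteps second covariant derivatives entirely: using metric compatibility it reduces everything to scalars,
\begin{equation*}
g_W(R_W(\sigma_1,\sigma_2)\sigma_3,\sigma_4)=\sigma_1\big(g_W(\nabla^W_{\sigma_2}\sigma_3,\sigma_4)\big)-g_W(\nabla^W_{\sigma_2}\sigma_3,\nabla^W_{\sigma_1}\sigma_4)-\sigma_2\big(g_W(\nabla^W_{\sigma_1}\sigma_3,\sigma_4)\big)+g_W(\nabla^W_{\sigma_1}\sigma_3,\nabla^W_{\sigma_2}\sigma_4)\ ,
\end{equation*}
which is its \eqref{6main}, and then evaluates the four scalar quantities \eqref{6-1}--\eqref{6-4} with exactly the tools you identify: Claim 1 (the derivative of $L(\rho)^{\dagger}$), Claim-2-type reciprocity, linearity of $L(\cdot)$, and $\sigma^{\ts}u_0=0$. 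The two routes differ only in where metric compatibility is invoked; yours requires differentiating the vector-valued Christoffel term $m(\sigma_2,\sigma_3)$ and then pairing with $L(\rho)^{\dagger}\sigma_4$, which is workable but heavier bookkeeping than the paper's all-scalar computation.

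The genuine gap is that this bookkeeping is never carried out. From ``I would differentiate term by term'' onward, every assertion is conjectural: ``I expect the terms \ldots to reassemble'', ``should collapse onto'', ``the delicate core of the proof''. But that reassembly \emph{is} the proposition: the exact placement of $\sigma_1,\ldots,\sigma_4$ in the four $L(m(\cdot,\cdot))$ terms and the coefficients $2,1,-1$ in the $n$-group are precisely what must be derived, and the symmetries of $R_W$ you invoke at the end are consistency checks satisfied by many wrong candidate formulas, so they cannot substitute for the computation. Worse, the one structural mechanism you do offer---that antisymmetrization in $(\sigma_1,\sigma_2)$ ``annihilates every symmetric piece $L(\sigma_a)\Phi_b+L(\sigma_b)\Phi_a$''---is not a valid step as stated: in $m(\sigma_1,m(\sigma_2,\sigma_3))-m(\sigma_2,m(\sigma_1,\sigma_3))$ and in $D_{\sigma_1}m(\sigma_2,\sigma_3)-D_{\sigma_2}m(\sigma_1,\sigma_3)$ the antisymmetrized pair consists of one outer and one inner slot, so symmetric sub-blocks do not cancel pairwise. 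In the paper's computation the corresponding cross terms survive in \eqref{6-3}--\eqref{6-4} and are exactly what recombine, via identities proved like Claim 2, e.g.
\begin{equation*}
\sigma_2^{\ts}L(\rho)^{\dagger}L(\sigma_3)\big(\nabla_GL(\rho)^{\dagger}\sigma_1\circ\nabla_GL(\rho)^{\dagger}\sigma_4\big)=\sigma_1^{\ts}L(\rho)^{\dagger}L\big(L(\sigma_3)L(\rho)^{\dagger}\sigma_2\big)L(\rho)^{\dagger}\sigma_4\ ,
\end{equation*}
into the $L(m(\cdot,\cdot))$ and $n$ groups with the stated coefficients. To close your proof you must actually expand $D_{\sigma_1}m(\sigma_2,\sigma_3)$ by Leibniz on each of the three summands of $m$ (using $\frac{d}{dt}\big|_{t=0}L(\rho+t\sigma_1)^{\dagger}\sigma=-L(\rho)^{\dagger}L(\sigma_1)L(\rho)^{\dagger}\sigma$ on tangent vectors), expand $m(\sigma_1,m(\sigma_2,\sigma_3))$, antisymmetrize, pair with $L(\rho)^{\dagger}\sigma_4$, and exhibit the regrouping term by term; as written, the proposal stops exactly where the proof has to begin.
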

\begin{proof}
Since $[\sigma_1,\sigma_2]_W=0$, then $R^W(\sigma_1,\sigma_2)\sigma_3=\nabla_{\sigma_1}^W\nabla_{\sigma_2}^W\sigma_3-\nabla_{\sigma_2}^W\nabla_{\sigma_1}^W\sigma_3$. 
Thus 
\begin{equation}\label{6main}
\begin{split}
g_W(R^W(\sigma_1,\sigma_2)\sigma_3, \sigma_4)=&\sigma_1( g_W(\nabla_{\sigma_2}^W\sigma_3, \sigma_4))-g_W(\nabla_{\sigma_2}^W\sigma_3, \nabla_{\sigma_1}^W\sigma_4)\\
-&\sigma_2( g_W(\nabla_{\sigma_1}^W\sigma_3, \sigma_4))+g_W(\nabla_{\sigma_1}^W\sigma_3, \nabla_{\sigma_2}^W\sigma_4).
\end{split}
\end{equation}

We first compute $\sigma_1( g_W(\nabla_{\sigma_2}^W\sigma_3, \sigma_4))$. Denote $\dot\rho(t)=\sigma_1$ and notice Claim 1, then
\begin{equation}\label{6-1}
\begin{split}
&\sigma_1( g_W(\nabla_{\sigma_2}^W\sigma_3, \sigma_4))\\
=&\frac{d}{dt}|_{t=0}\Big\{-\frac{1}{2}\Big[\sigma_3^{\ts}L(\rho(t))^{\dagger}L(\sigma_2)L(\rho(t))^{\dagger}\sigma_4+\sigma_2^{\ts}L(\rho(t))^{\dagger}L(\sigma_3)L(\rho(t))^{\dagger}\sigma_4\Big]\\
&\hspace{1.25cm}+\frac{1}{2}\sigma_2^{\ts}L(\rho(t))^{\dagger}L(\sigma_4)L(\rho(t))^{\dagger}\sigma_3\Big\}\\
=&\frac{1}{2}\Big[\sigma_3^{\ts}L(\rho)^{\dagger}L(\sigma_1)L(\rho)^{\dagger}L(\sigma_2)L(\rho)^{\dagger}\sigma_4+\sigma_3^{\ts}L(\rho)^{\dagger}L(\sigma_2)L(\rho)^{\dagger}L(\sigma_1)L(\rho)^{\dagger}\sigma_4\\
&+\sigma_2^{\ts}L(\rho)^{\dagger}L(\sigma_1)L(\rho)^{\dagger}L(\sigma_3)L(\rho)^{\dagger}\sigma_4+\sigma_2^{\ts}L(\rho)^{\dagger}L(\sigma_3)L(\rho)^{\dagger}L(\sigma_1)L(\rho)^{\dagger}\sigma_4\Big]\\
-&\frac{1}{2}\Big[\sigma_2^{\ts}L(\rho)^{\dagger}L(\sigma_1)L(\rho)^{\dagger}L(\sigma_4)L(\rho)^{\dagger}\sigma_3+\sigma_2^{\ts}L(\rho)^{\dagger}L(\sigma_4)L(\rho)^{\dagger}L(\sigma_1)L(\rho)^{\dagger}\sigma_3\Big].
\end{split}
\end{equation}
Similarly, 
\begin{equation}\label{6-2}
\begin{split}
&\sigma_2( g_W(\nabla_{\sigma_1}^W\sigma_3, \sigma_4))\\
=&\frac{1}{2}\Big[\sigma_3^{\ts}L(\rho)^{\dagger}L(\sigma_2)L(\rho)^{\dagger}L(\sigma_1)L(\rho)^{\dagger}\sigma_4+\sigma_3^{\ts}L(\rho)^{\dagger}L(\sigma_1)L(\rho)^{\dagger}L(\sigma_2)L(\rho)^{\dagger}\sigma_4\\
&+\sigma_1^{\ts}L(\rho)^{\dagger}L(\sigma_2)L(\rho)^{\dagger}L(\sigma_3)L(\rho)^{\dagger}\sigma_4+\sigma_1^{\ts}L(\rho)^{\dagger}L(\sigma_3)L(\rho)^{\dagger}L(\sigma_2)L(\rho)^{\dagger}\sigma_4\Big]\\
-&\frac{1}{2}\Big[\sigma_1^{\ts}L(\rho)^{\dagger}L(\sigma_2)L(\rho)^{\dagger}L(\sigma_4)L(\rho)^{\dagger}\sigma_3+\sigma_1^{\ts}L(\rho)^{\dagger}L(\sigma_4)L(\rho)^{\dagger}L(\sigma_2)L(\rho)^{\dagger}\sigma_3\Big].
\end{split}
\end{equation}

We next derive $\sigma_2( g_W(\nabla_{\sigma_1}^W\sigma_3, \sigma_4))$. From proposition \ref{Levi}, we have
\begin{equation*}
\begin{split}
&g_W(\nabla_{\sigma_2}^W\sigma_3, \nabla_{\sigma_1}^W\sigma_4)\\
=&\frac{1}{2}\Big\{\sigma_2^{\ts}L(\rho)^{\dagger}L(\nabla_{\sigma_1}^W\sigma_4)L(\rho)^{\dagger}\sigma_3-\sigma_2^{\ts}L(\rho)^{\dagger}L(\sigma_3)L(\rho)^{\dagger}\nabla_{\sigma_1}^W\sigma_4-\sigma_3^{\ts}L(\rho)^{\dagger}L(\sigma_{2})L(\rho)^{\dagger}\nabla_{\sigma_1}^W\sigma_4\Big\}.
\end{split}
\end{equation*}
Notice 
\begin{equation*}
\nabla_{\sigma_1}^W\sigma_4=\frac{1}{2}\{L(\rho)(\nabla_GL(\rho)^{\dagger}\sigma_1\circ \nabla_GL(\rho)^{\dagger}\sigma_4)-L(\sigma_4)L(\rho)^{\dagger}\sigma_1-L(\sigma_1)L(\rho)^{\dagger}\sigma_4\}
\end{equation*}
and $L(\cdot)$ is a matrix function linear on $\rho$. Thus
\begin{equation}\label{6-3}
\begin{split}
&g_W(\nabla_{\sigma_2}^W\sigma_3, \nabla_{\sigma_1}^W\sigma_4)\\
=&\frac{1}{4}\Big\{\frac{1}{2}\sigma_2^{\ts}L(\rho)^{\dagger}L\Big(L(\rho)(\nabla_GL(\rho)^{\dagger}\sigma_1\circ \nabla_GL(\rho)^{\dagger}\sigma_4)\Big)L(\rho)^{\dagger}\sigma_3\\
&+\frac{1}{2}\sigma_1^{\ts}L(\rho)^{\dagger}L\Big(L(\rho)(\nabla_GL(\rho)^{\dagger}\sigma_2\circ \nabla_GL(\rho)^{\dagger}\sigma_3)\Big)L(\rho)^{\dagger}\sigma_4\\
&-\sigma_2^{\ts}L(\rho)^{\dagger}L\Big(L(\sigma_1)L(\rho)^{\dagger}\sigma_4+L(\sigma_4)L(\rho)^{\dagger}\sigma_1\Big)L(\rho)^{\dagger}\sigma_3\\
&-\sigma_1^{\ts}L(\rho)^{\dagger}L(L(\sigma_2)L(\rho)^{\dagger}\sigma_3+L(\sigma_3)L(\rho)^{\dagger}\sigma_2)L(\rho)^{\dagger}\sigma_4\\
&+\sigma_2^{\ts}L(\rho)^{\dagger}L(\sigma_3)L(\rho)^{\dagger}L(\sigma_4)L(\rho)^{\dagger}\sigma_1+\sigma_2^{\ts}L(\rho)^{\dagger}L(\sigma_3)L(\rho)^{\dagger}L(\sigma_1)L(\rho)^{\dagger}\sigma_4\\
&+\sigma_3^{\ts}L(\rho)^{\dagger}L(\sigma_2)L(\rho)^{\dagger}L(\sigma_4)L(\rho)^{\dagger}\sigma_1+\sigma_3^{\ts}L(\rho)^{\dagger}L(\sigma_2)L(\rho)^{\dagger}L(\sigma_1)L(\rho)^{\dagger}\sigma_4\Big\}.
\end{split}
\end{equation}
In the derivation of \eqref{6-3}, we use the following results, which can be proved similarly as the ones in Claim 2:
\begin{equation*}
\sigma_2^{\ts}L(\rho)^{\dd}L(\sigma_3)\Big(\nabla_GL(\rho)^{\dd}\sigma_1\circ\nabla_GL(\rho)^{\dd}\sigma_4)\Big)=\sigma_1^{\ts}L(\rho)^{\dd}L\Big(L(\sigma_3)L(\rho)^{\dd}\sigma_2\Big)L(\rho)^{\dd}\sigma_4,
\end{equation*}
and
\begin{equation*}\label{6-mid}
\begin{split}
&\sigma_2^{\ts}L(\rho)^{\dd}L\Big(L(\rho)(\nabla_GL(\rho)^{\dd}\sigma_1\circ\nabla_GL(\rho)^{\dd}\sigma_4)\Big)L(\rho)^{\dd}\sigma_3\\
=&\sigma_1^{\ts}L(\rho)^{\dd}L\Big(L(\rho)(\nabla_GL(\rho)^{\dd}\sigma_2\circ\nabla_GL(\rho)^{\dd}\sigma_3)\Big)L(\rho)^{\dd}\sigma_4\\
=&(L(\rho)^{\dd}\sigma_2\circ L(\rho)^{\dd}\sigma_3)^{\ts} L(\rho)   (\nabla_GL(\rho)^{\dd}\sigma_1\circ\nabla_GL(\rho)^{\dd}\sigma_4).
\end{split}
\end{equation*}
Similarly, 
\begin{equation}\label{6-4}
\begin{split}
&g_W(\nabla_{\sigma_1}^W\sigma_3, \nabla_{\sigma_2}^W\sigma_4)\\
=&\frac{1}{4}\Big\{\frac{1}{2}\sigma_1^{\ts}L(\rho)^{\dagger}L\Big(L(\rho)(\nabla_GL(\rho)^{\dagger}\sigma_2\circ \nabla_GL(\rho)^{\dagger}\sigma_4)\Big)L(\rho)^{\dagger}\sigma_3\\
&+\frac{1}{2}\sigma_2^{\ts}L(\rho)^{\dagger}L\Big(L(\rho)(\nabla_GL(\rho)^{\dagger}\sigma_1\circ \nabla_GL(\rho)^{\dagger}\sigma_3)\Big)L(\rho)^{\dagger}\sigma_4\\
&-\sigma_1^{\ts}L(\rho)^{\dagger}L\Big(L(\sigma_2)L(\rho)^{\dagger}\sigma_4+L(\sigma_4)L(\rho)^{\dagger}\sigma_2\Big)L(\rho)^{\dagger}\sigma_3\\
&-\sigma_2^{\ts}L(\rho)^{\dagger}L(L(\sigma_1)L(\rho)^{\dagger}\sigma_3+L(\sigma_3)L(\rho)^{\dagger}\sigma_1)L(\rho)^{\dagger}\sigma_4\\
&+\sigma_1^{\ts}L(\rho)^{\dagger}L(\sigma_3)L(\rho)^{\dagger}L(\sigma_4)L(\rho)^{\dagger}\sigma_2+\sigma_1^{\ts}L(\rho)^{\dagger}L(\sigma_3)L(\rho)^{\dagger}L(\sigma_2)L(\rho)^{\dagger}\sigma_4\\
&+\sigma_3^{\ts}L(\rho)^{\dagger}L(\sigma_1)L(\rho)^{\dagger}L(\sigma_4)L(\rho)^{\dagger}\sigma_2+\sigma_3^{\ts}L(\rho)^{\dagger}L(\sigma_1)L(\rho)^{\dagger}L(\sigma_2)L(\rho)^{\dagger}\sigma_4\Big\}.
\end{split}
\end{equation}
Substituting \eqref{6-1}, \eqref{6-2}, \eqref{6-3}, \eqref{6-4} into \eqref{6main}, we derive the result.  
\end{proof}
By the curvature tensor, we introduce all types of curvatures on probability manifold by the orthonormal basis in $\{X_1,\cdots, X_{n-1}\}\subset T_\rho\mathcal{P}_+(G)$, i.e. 
\begin{equation*}
X_i={\sqrt{\lambda_{i}(\rho)}}u_{i}(\rho),\quad i=1,\cdots, n-1.
\end{equation*}
It is clear that
\begin{equation}\label{orth}
g_W(X_i, X_j)=X_i^{\ts}L(\rho)^{\dd} X_j=\begin{cases}
1& \textrm{if $i=j$} \\
0&\textrm{if $i\neq j$}
\end{cases}.
\end{equation}
Under this basis, we compute the following curvature tensors. 
\begin{proposition}[Riemannian, Sectional, Ricci, Scalar curvature]
Given $X_i, X_j, X_k, X_l$. Then the curvature tensor $R^W_{ijkl}=g_W(R^W(X_i,X_j)X_k, X_l)$ satisfies
\begin{equation*}
\begin{split}
R^W_{ijkl}=&\frac{1}{4\lambda_i\lambda_j\lambda_k\lambda_l}\Big\{X_j^{\ts}L(\hat{m}(X_i,X_k))X_l+X_i^{\ts}L(\hat{m}(X_j,X_l))X_k\\
&\hspace{1.5cm}-X_j^{\ts}L(\hat{m}(X_i,X_l))X_k-X_i^{\ts}L(\hat{m}(X_j,X_k))X_l\\
&\hspace{1.5cm}+2 \hat{n}(X_i, X_j)^{\ts}L(\rho)^{\dagger}\hat{n}(X_k,X_l)\\
&\hspace{1.5cm}+\hat{n}(X_i,X_k)^{\ts}L(\rho)^{\dagger}\hat{n}(X_j,X_l)-\hat{n}(X_j,X_k)^{\ts}L(\rho)^{\dagger}\hat{n}(X_i,X_l)\Big\},
\end{split}
\end{equation*}
where \begin{equation*}
\begin{split}
\hat{m}(X_a, X_b):=&-[\lambda_aL(X_a)X_b+\lambda_bL(X_b)X_a ]+\frac{1}{2}L(\rho)(\nabla_GX_a\circ\nabla_GX_b)\\
\hat{n}(X_a, X_b):=&\lambda_aL(X_a)X_b-\lambda_bL(X_b)X_a.
\end{split}
\end{equation*}
Thus 
\begin{itemize}
\item[(i)]The sectional curvature $K^W_{ij}=g_W(R^W(X_i,X_j)X_j, X_i)$ satisfies
\begin{equation*}
\begin{split}
K^{W}_{ij}=& \frac{1}{4\lambda_i^2\lambda_j^2}\Big\{X_j^{\ts} L(\hat{m}(X_i, X_j))X_i + X_i^{\ts}L(\hat{m}(X_j, X_i))X_j\\
&\hspace{0.8cm}-X_j^{\ts}L(\hat{m}(X_i, X_i))X_j- X_i^{\ts}L(m(X_j, X_j))X_i \\
&\hspace{0.8cm}-3 \hat{n}(X_i, X_j)L(\rho)^{\dd}\hat{n}(X_i, X_j)\Big\}.
\end{split}
\end{equation*}
\item[(ii)] The Ricci curvature $\textrm{Ric}^W_{ij}=\sum_{k=1}^{n-1}g_W(R^W(X_i,X_k)X_k, X_j)$ satisfies
\begin{equation*}\label{RicciW}
\begin{split}
\textrm{Ric}^W_{ij}=&\frac{1}{4\lambda_i\lambda_j}\sum_{k=1}^{n-1}\frac{1}{\lambda_k^2}\Big\{X_k^{\ts} L(\hat{m}(X_i, X_k))X_j + X_i^{\ts}L(\hat{m}(X_j, X_k))X_k\\
&\hspace{2cm}-X_k^{\ts} L(\hat{m}(X_i, X_j))X_k- X_i^{\ts} L(\hat{m}(X_k, X_k))X_j\\
&\hspace{2cm}-3\hat{n}(X_i,X_k)^{\ts}L(\rho)^{\dagger}\hat{n}(X_j,X_k)\Big\}.
\end{split}
\end{equation*}
\item[(iii)]The scalar curvature $S^W=\sum_{i=1}^m\sum_{j=1}^mg_W(R^W(X_i,X_j)X_j, X_i)$ satisfies
\begin{equation*}\label{RicciW}
\begin{split}
S^W=&\sum_{i=1}^n\sum_{j=1}^n\frac{1}{4\lambda_i\lambda_j}\sum_{k=1}^{n-1}\frac{1}{\lambda_k^2}\Big\{X_k^{\ts} L(\hat{m}(X_i, X_k))X_j + X_i^{\ts}L(\hat{m}(X_j, X_k))X_k\\
&\hspace{3.2cm}-X_k^{\ts} L(\hat{m}(X_i, X_j))X_k- X_i^{\ts} L(\hat{m}(X_k, X_k))X_j\\
&\hspace{3.2cm}-3\hat{n}(X_i,X_k)^{\ts}L(\rho)^{\dagger}\hat{n}(X_j,X_k)\Big\}.
\end{split}
\end{equation*}
\end{itemize}
\end{proposition}

\subsection{Jacobi equation}
In this sequel, we establish two formulations of Jacobi fields in $(\mathcal{P}_+(G), g_W)$. As in Riemannian geometry, the Jacobi equation refers $\nabla^W_{\dot\rho}\nabla^W_{\dot\rho}J+R^W(J,\dot\rho)\dot\rho=0$, with given initial condition $J(0)$, $\dot J(0)\in\mathbb{R}^n$. I.e. denote $\rho(t)$ as the geodesic and $(X_1(t),\cdots, X_{n-1}(t))$ as an orthonormal frame along $\rho(t)$. Consider $J(t)=\sum_{i=1}^{n-1}a_i(t)X_i(t)$, thus
\begin{equation*}
\ddot a_i+\sum_{k=1}^{n-1}a_kg_W(R^W(X_i,\dot\rho)\dot\rho, X_k)=0, \quad\textrm{for $i=1,2,\cdots, n-1$}.
\end{equation*}
In fact, there is the other interesting formulation of the Jacobi equation by the calculus of variation. Consider an infinitesimal deformation $\rho_\epsilon(t)=\rho(t)+\epsilon h(t)\in\mathcal{P}_+(G)$ with 
$h(t)=(h_i(t))_{i=1}^n\in \big(C^{\infty}[0,1]\big)^n$, $\sum_{i=1}^nh_i(t)=0$, and $h(0)=h(1)=0$, 
\begin{equation}\label{f_svariation}
\mathcal{E}(\rho_\epsilon)=\int_{0}^1\frac{1}{2}\dot\rho_\epsilon(t)^{\ts}L(\rho_\epsilon(t))^{\dd}\dot\rho_\epsilon(t) dt=\mathcal{E}(\rho)+\epsilon\delta \mathcal{E}(\rho)(h)+\frac{\epsilon^2}{2}\delta^2\mathcal{E}(\rho)(h)+o(\epsilon^2).
\end{equation}
\begin{lemma}[Variation of energy]\label{lemma} 
The first and second variation defined in \eqref{f_svariation} satisfies 
\begin{equation*}
\delta \mathcal{E}(\rho)(h)=\int_0^1~ \dot\rho^{\ts}L(\rho)^{\dd}\big(\dot h-\frac{1}{2}L(h)L(\rho)^{\dd}\dot\rho\big)~dt ,
\end{equation*}
and 
\begin{equation}\label{sv}
\delta^2 \mathcal{E}(\rho)(h)=\int_0^1~\big(\dot h-L(h)L(\rho)^{\dd}\dot \rho\big)^{\ts} L(\rho)^{\dd}\big(\dot h-L(h)L(\rho)^{\dd}\dot \rho\big)~dt. 
\end{equation}
\end{lemma}
\begin{proof}
Since $\dot\rho_\epsilon\in T_\rho\mathcal{P}_+(G)$ and Theorem \ref{embed} holds, then $\dot\rho_\epsilon^{\ts}L(\rho_\epsilon)^{\dd}\dot\rho_\epsilon=\dot\rho_\epsilon^{\ts}g(\rho_\epsilon)\dot\rho_\epsilon$.
We then derive the result by the Taylor expansion. 
\begin{equation}\label{variation}
\begin{split}
\mathcal{E}(\rho_\epsilon)=&\int_0^1~ \frac{1}{2}(\dot\rho+\epsilon \dot h)^{\ts} L(\rho+\epsilon h)^{\dd}(\dot\rho+\epsilon\dot h)~ dt=\int_0^1~ \frac{1}{2}(\dot\rho+\epsilon \dot h)^{\ts} g(\rho+\epsilon h)(\dot\rho+\epsilon\dot h)~ dt\\
=&\int_0^1~\frac{1}{2}\dot\rho^{\ts} g(\rho+\epsilon h)\dot \rho+\epsilon \dot h^{\ts}g(\rho+\epsilon h)\dot \rho+\frac{\epsilon^2}{2} \dot h^{\ts}g(\rho+\epsilon h)\dot h~dt  \\
=&\int_0^1~\Big\{\frac{1}{2}\dot\rho^{\ts}L(\rho)^{\dd}\dot \rho+\epsilon \big[\frac{1}{2}\dot\rho^{\ts}\frac{d}{d\epsilon}|_{\epsilon=0}g(\rho+\epsilon h)\dot\rho+\dot h^{\ts} L(\rho)^{\dd}\dot\rho\big]\\
&+\frac{\epsilon^2}{2} \big[\frac{1}{2}\dot\rho^{\ts}\frac{d^2}{d\epsilon^2}|_{\epsilon=0}g(\rho+\epsilon h)\dot \rho+~2\dot h^{\ts}\frac{d}{d\epsilon}|_{\epsilon=0}g(\rho+\epsilon h)\dot \rho ~+~\dot h^{\ts}L(\rho)^{\dd}h\big]\Big\}~dt+o(\epsilon^2).
\end{split}
\end{equation}
To continue the derivation, we need the following claim.

\textbf{Claim 3:}
\begin{equation}\label{claim3}
\left\{
\begin{split}
\frac{d}{d\epsilon}g(\rho+\epsilon h)=&-g(\rho+\epsilon h)\cdot L(h)\cdot g(\rho+\epsilon h)\\
\frac{d^2}{d\epsilon^2}g(\rho+\epsilon h)=&2g(\rho+\epsilon h)\cdot L(h)\cdot g(\rho+\epsilon h)\cdot L(h)\cdot g(\rho+\epsilon h).
\end{split}\right.
\end{equation}
\begin{proof}[Proof of Claim 3]
We show \eqref{claim3} by the following steps. From \eqref{claim1}, we have
\begin{equation*}
\begin{split}
\frac{d}{d\epsilon}g(\rho+\epsilon h)=&-g(\rho+\epsilon h)\cdot \frac{d}{d\epsilon}g(\rho+\epsilon h)^{-1}\cdot g(\rho+\epsilon h)\\
=&-g(\rho+\epsilon h)\cdot \frac{d}{d\epsilon}(L(\rho+\epsilon h)+u_0u_0^{\ts})\cdot g(\rho+\epsilon h)\\
=&-g(\rho+\epsilon h)\cdot L(h)\cdot g(\rho+\epsilon h).
\end{split}
\end{equation*}
Similarly, we have 
\begin{equation*} 
\begin{split}
\frac{d^2}{d\epsilon^2}g(\rho+\epsilon h)=&\frac{d}{d\epsilon}\Big(\frac{d}{d\epsilon}g(\rho+\epsilon h)\Big)=-\frac{d}{d\epsilon}\Big(g(\rho+\epsilon h)\cdot L(h)\cdot g(\rho+\epsilon h)\Big)\\
=&2g(\rho+\epsilon h)\cdot L(h)\cdot g(\rho+\epsilon h)\cdot L(h)\cdot g(\rho+\epsilon h).
\end{split}
\end{equation*}
\end{proof}
 Substituting \eqref{claim3} into \eqref{variation} and using the fact $g(\rho)\dot\rho=L(\rho)^{\dd}\dot\rho$ since $\dot\rho\in T_\rho\mathcal{P}_+(G)$, we derive the first/second variation in Lemma \ref{lemma}. 
 \end{proof}
\begin{theorem}[Jacobi equation]
Consider the geodesic $\rho(t)\in \mathcal{P}_+(G)$ connecting $\rho^0$ and $\rho^1$. The Jacobi equation of $h(t)$ along the geodesic $\rho(t)$ satisfies 
\begin{equation}\label{Jacobi2}
\dot h-L(h)L(\rho)^{\dagger}\dot\rho=0,\quad h(0)=h(1)=0.
\end{equation}
\end{theorem}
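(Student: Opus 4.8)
The plan is to read off \eqref{Jacobi2} from the variational picture rather than integrating the second-order equation $\nabla^W_{\dot\rho}\nabla^W_{\dot\rho}J+R^W(J,\dot\rho)\dot\rho=0$ directly. Recall that a field $h$ along the geodesic $\rho(t)$ with $h(0)=h(1)=0$ is a Jacobi field exactly when it is a critical point of the index form, i.e.\ of the second variation $h\mapsto\delta^2\mathcal{E}(\rho)(h)$ taken over deformations fixing the endpoints; equivalently, $h$ lies in the kernel of the symmetric bilinear form polarizing $\delta^2\mathcal{E}(\rho)$. This is the standard equivalence in Riemannian geometry, and here the associated quadratic form has a very rigid shape that forces the Jacobi equation down to first order.

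First I would substitute the second variation formula \eqref{sv} of Lemma \ref{lemma}, writing $\delta^2\mathcal{E}(\rho)(h)=\int_0^1 w^{\ts}L(\rho)^{\dagger}w\,dt$ with $w:=\dot h-L(h)L(\rho)^{\dagger}\dot\rho$. The decisive observation is that this integrand is a perfect square. To exploit it I would verify that $w\in T_\rho\mathcal{P}_+(G)$: since $\sum_i\dot h_i=0$ and $\mathbf{1}^{\ts}L(a)=0$ for every $a$ (so that $\sum_i\bigl(L(h)L(\rho)^{\dagger}\dot\rho\bigr)_i=0$), the vector $w$ has vanishing coordinate sum. On this tangent space $L(\rho)^{\dagger}$ is positive definite, as recorded earlier through $\inf\{\sigma^{\ts}L(\rho)^{\dagger}\sigma:\sum_i\sigma_i=0,\ \sum_i\sigma_i^2=1\}=1/\lambda_1(\rho)>0$. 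Hence $w^{\ts}L(\rho)^{\dagger}w\ge 0$, with equality iff $w=0$.

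The argument then closes by the elementary principle that a nonnegative quadratic functional has its critical points exactly at its null set. If $B$ denotes the bilinear form polarizing $\delta^2\mathcal{E}(\rho)$, then $B$ is nonnegative, so $B(h,h)=0$ already implies $B(h,k)=0$ for all admissible $k$ by the Cauchy--Schwarz inequality for $B$; conversely any critical point satisfies $B(h,h)=0$. Thus $h$ is a Jacobi field iff $B(h,h)=\delta^2\mathcal{E}(\rho)(h)=\int_0^1 w^{\ts}L(\rho)^{\dagger}w\,dt=0$, and since the integrand is nonnegative this forces $w^{\ts}L(\rho)^{\dagger}w=0$ for almost every $t$; positive definiteness of $L(\rho)^{\dagger}$ on $T_\rho\mathcal{P}_+(G)$ then gives $w\equiv 0$, i.e.\ $\dot h-L(h)L(\rho)^{\dagger}\dot\rho=0$. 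Together with the imposed endpoint conditions $h(0)=h(1)=0$ this is precisely \eqref{Jacobi2}.

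The step I expect to be the main obstacle is not the computation but the bookkeeping that legitimately identifies the null directions of \eqref{sv} with Jacobi fields in this embedded, pseudo-inverse formulation: one must confirm that the quadratic form in \eqref{sv} is genuinely the index form attached to the connection $\nabla^W$ of Proposition \ref{Levi} and the curvature tensor computed above, so that its kernel truly coincides with the solution space of $\nabla^W_{\dot\rho}\nabla^W_{\dot\rho}J+R^W(J,\dot\rho)\dot\rho=0$. Once that identification is in place, the collapse from a second-order equation to the first-order relation \eqref{Jacobi2} is dictated entirely by the perfect-square structure of \eqref{sv} and the positivity of $L(\rho)^{\dagger}$, requiring no further curvature calculation; in particular it reveals that there are no conjugate points along $\rho(t)$, consistent with the fact that $\dot h=L(h)L(\rho)^{\dagger}\dot\rho$ together with $h(0)=0$ admits only $h\equiv 0$.
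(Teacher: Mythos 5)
Your proposal is correct and follows essentially the same route as the paper: both identify Jacobi fields with critical points of the second variation \eqref{sv} from Lemma \ref{lemma}, and both exploit the perfect-square structure $\int_0^1 w^{\ts}L(\rho)^{\dagger}w\,dt$ with $w=\dot h-L(h)L(\rho)^{\dagger}\dot\rho$ together with the positive definiteness of $L(\rho)^{\dagger}$ on $T_\rho\mathcal{P}_+(G)$ to force $w\equiv 0$. The only cosmetic difference is that the paper writes out the Euler--Lagrange equation \eqref{EL} explicitly and then shows its solutions annihilate the second variation, whereas you shortcut that step with the Cauchy--Schwarz argument for nonnegative bilinear forms; the underlying mechanism is identical.
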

\begin{proof}
Denote $\delta^2 \mathcal{E}(\rho)(h)=\int_0^1A(h,\dot h)dt$, where
\begin{equation*}
A(h,\dot h)=\big(\dot h-L(h)L(\rho)^{\dd}\dot \rho\big)^{\ts} L(\rho)^{\dd}\big(\dot h-L(h)L(\rho)^{\dd}\dot \rho\big).
\end{equation*}
The Euler-Lagrangian equation of \eqref{sv}, $\frac{d}{dt}\nabla_{\dot h}A(h,\dot h)=\nabla_hA(h,\dot h)$, satisfies
\begin{equation}\label{EL}
\frac{d}{dt}\Big(L(\rho)^{\dd}\big(\dot h-L(h)L(\rho)^{\dd}\dot\rho\big)\Big)=\Big(\nabla_GL(\rho)^{\dd}\dot\rho\circ\nabla_GL(\rho)^{\dd}\big(\dot h-L(h)L(\rho)^{\dd}\dot\rho\big)\Big).
\end{equation}
On one hand, if $h(t)$ satisfies \eqref{Jacobi2}, then it is a solution of \eqref{EL}. On the other hand, if $h(t)$ satisfies \eqref{EL}, then $\delta^2\mathcal{E}(h)=\frac{1}{2}\int_0^1h^T\Big(\frac{d}{dt}\nabla_{\dot h}A(h,\dot h)-\nabla_hA(h,\dot h)\Big)dt=0$. Since $\delta^2\mathcal{E}(h)=\int_0^1~\big(\dot h-L(h)L(\rho)^{\dd}\dot \rho\big)^{\ts} L(\rho)^{\dd}\big(\dot h-L(h)L(\rho)^{\dd}\dot \rho\big)~dt=0$ and $L(\rho)^{\dd}$ is postive definite in $T_\rho\mathcal{P}_+(G)$, then $h(t)$ satisfies \eqref{Jacobi2}. 
\end{proof}
\subsection{Volume form, Divergence, and Laplace-Beltrami operator}
In this sequel, we study the volume form in $(\mathcal{P}_+(G), g_W)$, based on which we introduce the divergence and Laplace-Beltrami operator.
\begin{theorem}[Volume form]
Denote the volume form of $\mathcal{P}_+(G)$ in the Euclidean metric as $\textrm{vol}$. 
Then the volume form of $(\mathcal{P}_+(G), g_W)$ satisfies 
\begin{equation*}
d\textrm{vol}_{W}={\Pi(\rho)}^{-\frac{1}{2}}d\textrm{vol},\quad\textrm{with}\quad \Pi(\rho):=\Pi_{i=2}^n\lambda_i(\rho),
\end{equation*}
where $\lambda_i(\rho)$ are positive eigenvalues of $L(\rho)$.
\end{theorem}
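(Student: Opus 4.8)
The plan is to compute the Riemannian volume form directly in a Euclidean-orthonormal frame of the tangent space, exploiting the fact that the eigenvectors of $L(\rho)$ simultaneously diagonalize the ambient Euclidean inner product and the metric tensor $g_W$.

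First I would recall the coordinate-free description of the volume form: for any $(n-1)$-dimensional Riemannian manifold and any basis $\{e_1,\dots,e_{n-1}\}$ of a tangent space with dual coframe $\{e^1,\dots,e^{n-1}\}$, one has $d\mathrm{vol}_W=\sqrt{\det\big(g_W(e_i,e_j)\big)}\; e^1\wedge\cdots\wedge e^{n-1}$. The essential observation is to choose the basis cleverly: if $\{e_i\}$ is orthonormal for the ambient Euclidean inner product restricted to $T_\rho\mathcal{P}_+(G)$, then $e^1\wedge\cdots\wedge e^{n-1}=d\mathrm{vol}$ is exactly the Euclidean volume form appearing in the statement, so that $d\mathrm{vol}_W=\sqrt{\det(g_W(e_i,e_j))}\,d\mathrm{vol}$ and the whole problem collapses to the evaluation of a single Gram determinant.

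The natural choice is the eigenframe $\{u_1(\rho),\dots,u_{n-1}(\rho)\}$ supplied by Lemma \ref{lemma_hodge}. By part (i) of that lemma, $U(\rho)=(u_0,u_1(\rho),\dots,u_{n-1}(\rho))$ is Euclidean-orthonormal and $u_0=\frac{1}{\sqrt n}(1,\dots,1)^{\ts}$ is the unique kernel direction of $L(\rho)$. Since $T_\rho\mathcal{P}_+(G)=\{\sigma\colon\sum_i\sigma_i=0\}$ is exactly the Euclidean orthogonal complement $u_0^{\perp}$, the remaining $n-1$ eigenvectors $u_1(\rho),\dots,u_{n-1}(\rho)$ form a Euclidean-orthonormal basis of $T_\rho\mathcal{P}_+(G)$. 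Evaluating $g_W$ on this frame gives $g_W(u_i,u_j)=u_i^{\ts}L(\rho)^{\dagger}u_j=\frac{1}{\lambda_i(\rho)}\delta_{ij}$, because $u_i(\rho)$ is an eigenvector of $L(\rho)^{\dagger}$ with eigenvalue $1/\lambda_i(\rho)$. Hence the Gram matrix is diagonal, $\det\big(g_W(u_i,u_j)\big)=\prod_{i=1}^{n-1}\frac{1}{\lambda_i(\rho)}=\Pi(\rho)^{-1}$, and therefore $d\mathrm{vol}_W=\Pi(\rho)^{-1/2}\,d\mathrm{vol}$, as claimed.

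The only genuinely delicate point is the correct identification of the Euclidean reference: one must keep in mind that $d\mathrm{vol}$ is the \emph{induced} Euclidean volume form on the hyperplane slice $\mathcal{P}_+(G)$ rather than an ambient $\mathbb{R}^n$ form, and verify that the chosen eigenframe is simultaneously Euclidean-orthonormal and $g_W$-diagonalizing. This is precisely what makes the eigenbasis so convenient, and it hinges on the fact---guaranteed by Lemma \ref{lemma_hodge}(i)---that the kernel of $L(\rho)$ is spanned by the constant vector $u_0$, which is Euclidean-orthogonal to every tangent vector of the simplex. An alternative route through an explicit $(n-1)$-coordinate chart (e.g. $\rho_n=1-\sum_{i<n}\rho_i$) would require forming and taking the determinant of the metric matrix built from $L(\rho)^{\dagger}$ in that chart, which is substantially messier; I would avoid it in favor of the eigenframe computation above.
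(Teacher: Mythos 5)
Your proof is correct, and it takes a genuinely different route from the paper's. The paper argues extrinsically: it treats $\mathcal{P}_+(G)$ as a codimension-one submanifold of the ambient manifold $(\mathcal{M}_+(G), g_{\mathcal{M}})$ constructed in Theorem \ref{embed}, writes the ambient volume form as $d\textrm{vol}_g=\sqrt{\det(g(\rho))}\,d\textrm{vol}$, and obtains $d\textrm{vol}_W$ by contracting $d\textrm{vol}_g$ with the $g_{\mathcal{M}}$-unit normal $u_0$, i.e. $(d\textrm{vol}_W)(u_1,\dots,u_{n-1})=(d\textrm{vol}_g)(u_0,u_1,\dots,u_{n-1})$; the factor $\Pi(\rho)^{-1/2}$ then comes from $\det(g(\rho))=\prod_{i=1}^{n-1}\lambda_i(\rho)^{-1}$, since $g(\rho)=L(\rho)^{\dagger}+u_0u_0^{\ts}$ has eigenvalues $1,1/\lambda_1,\dots,1/\lambda_{n-1}$. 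You instead work intrinsically: you never invoke $g_{\mathcal{M}}$ or the normal-contraction identity, but apply the standard Gram-determinant formula for the volume form in the Euclidean-orthonormal eigenframe $\{u_1(\rho),\dots,u_{n-1}(\rho)\}$ of the tangent hyperplane, where $g_W$ is diagonal with entries $1/\lambda_i(\rho)$. Both arguments ultimately rest on the same spectral facts from Lemma \ref{lemma_hodge} (kernel of $L(\rho)$ spanned by $u_0$, orthonormal eigenvectors), so the linear algebra is shared; the difference is in the framing. The paper's route has the advantage of fitting its overall program, in which every geometric quantity of $(\mathcal{P}_+(G),g_W)$ is inherited from the embedding into $(\mathcal{M}_+(G),g_{\mathcal{M}})$, and it reuses the normalization $g(\rho)u_0=u_0$ that appears elsewhere. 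Your route is more self-contained and elementary, and it is actually cleaner on one point the paper glosses over: you correctly insist that $d\textrm{vol}$ is the \emph{induced} Euclidean form on the hyperplane slice, whereas the paper's proof silently conflates the Lebesgue form on $\mathbb{R}^n$ with that induced form (the two agree here only because $\det U(\rho)=\pm 1$, so contracting with $u_0$ costs no Jacobian factor). Aside from the usual pointwise orientation conventions, which both proofs leave implicit, your argument is complete.
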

\begin{proof}
Since $\mathcal{M}_+(G)$ is a smooth oriented manifold with dimension $n$, then $\mathcal{P}_+(G)$ is a submanifold of $\mathcal{M}_+(G)$ with co-dimension $1$.
The orientation of $\mathcal{P}_+(G)$ is induced by its normal
unit vector field $u_0$ and the orientation on $\mathcal{M}_+(G)$. Then the volume form on $(\mathcal{P}_+(G), g_W)$ can be given as follows. Denote the volume form of $(\mathcal{M}_+(G), g_{\mathcal{M}})$ by $\textrm{vol}_g$, then
for any $\rho\in \mathcal{P}_+(G)$ and any $u_1,\cdots, u_{n-1}\in T_\rho\mathcal{P}_+(G)$, 
\begin{equation*}
(d\textrm{vol}_{W})(u_1,\cdots, u_{n-1}) = (d\textrm{vol}_{g})(u_0, u_1,\cdots, u_{n-1}).
\end{equation*}
Since $d\textrm{vol}_g=\sqrt{\textrm{det}(g(\rho))}d\textrm{vol}$ and $g(\rho)u_0=1$, it is clear that $d\textrm{vol}_{W}=\sqrt{\textrm{det}(g(\rho))}d\textrm{vol}={\Pi(\rho)}^{-\frac{1}{2}}d\textrm{vol}$.
\end{proof}
We continue the derivations based on volume forms in $(\mathcal{P}_+(G), g_W)$.
\begin{proposition}[Divergence, Laplace-Beltrami operators]\label{p8}
Denote $\mathcal{G}(\rho)=(\mathcal{G}_i(\rho))_{i=1}^n$, $\mathcal{G}_i(\rho)\in C^\infty (\mathcal{P}_+(G))$. The divergence operator $\textrm{div}_W(\cdot)\colon (C^{\infty}(\mathcal{P}_+(G)))^n\rightarrow C^{\infty}(\mathcal{P}_+(G))$ satisfies
\begin{equation*}
\begin{split}
\textrm{div}_{{W}}\mathcal{G}(\rho)=&\Pi(\rho)^{\frac{1}{2}}\nabla_\rho\cdot(\mathcal{G}(\rho)\Pi(\rho)^{-\frac{1}{2}})=\nabla_\rho\cdot\mathcal{G}(\rho)-\frac{1}{2}\mathcal{G}(\rho)^{\ts}d_\rho\log\Pi(\rho),
\end{split}
\end{equation*}
where $\nabla_\rho\cdot =\sum_{i=1}^n\frac{\partial}{\partial \rho_i}$.

Denote $\mathcal{F}(\rho)\in C^{\infty}(\mathcal{P}_+(G))$. The Laplace-Beltrami operator $\Delta_W\colon C^{\infty}(\mathcal{P}_+(G))\rightarrow C^{\infty}(\mathcal{P}_+(G))$ satisfies 
\begin{equation*}
\begin{split}
\Delta_W \mathcal{F}(\rho)=&\Pi(\rho)^{\frac{1}{2}}\nabla_\rho\cdot(\Pi(\rho)^{-\frac{1}{2}} L(\rho)d_\rho\mathcal{F}(\rho))\\
=&\textrm{tr}(L(\rho)\cdot d^2_\rho\mathcal{F}(\rho))-\frac{1}{2}(d_\rho\mathcal{F}(\rho))^{\ts}L(\rho)\big(d_\rho\log\Pi(\rho)\big)\\
=&\sum_{(i,j)\in \vec{E}}\omega_{ij}\theta_{ij}(\rho)(\frac{\partial^2}{\partial\rho_i^2}-2\frac{\partial^2}{\partial\rho_i\partial\rho_j}+\frac{\partial^2}{\partial\rho_j^2})\mathcal{F}(\rho)-\frac{1}{2}\sum_{(i,j)\in \vec{E}}\nabla_{ij}d_\rho\log\Pi(\rho)\cdot\nabla_{ij}d_\rho\mathcal{F}(\rho)\theta_{ij}(\rho).
\end{split}
\end{equation*}
\end{proposition}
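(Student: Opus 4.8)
The plan is to derive both formulas from the single geometric input supplied by the preceding Volume form Theorem, namely that the Riemannian volume form of $(\mathcal{P}_+(G), g_W)$ is $d\textrm{vol}_{W}=\Pi(\rho)^{-1/2}d\textrm{vol}$, where $d\textrm{vol}$ is the Euclidean volume form on the hyperplane $\{\sum_i\rho_i=1\}$. The Laplace--Beltrami operator will then follow from the divergence via $\Delta_W\mathcal{F}=\textrm{div}_W(\nabla_W\mathcal{F})$, using the gradient $\nabla_W\mathcal{F}(\rho)=L(\rho)d_\rho\mathcal{F}(\rho)$ already obtained in the Gradient Proposition.

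For the divergence I would begin from the coordinate-free characterization $\mathcal{L}_{\mathcal{G}}(d\textrm{vol}_W)=(\textrm{div}_W\mathcal{G})\,d\textrm{vol}_W$, which shows the divergence depends on the metric only through its volume form. Since $u_0=\frac{1}{\sqrt n}(1,\dots,1)^{\ts}$ is a \emph{constant} unit normal to $\mathcal{P}_+(G)$, for any tangent field $\mathcal{G}$ (so $\sum_i\mathcal{G}_i=0$) the ambient operator $\nabla_\rho\cdot\mathcal{G}=\sum_i\partial_{\rho_i}\mathcal{G}_i$ coincides with the intrinsic Euclidean divergence on the hyperplane and is insensitive to the normal ambiguity in extending $\mathcal{G}$ off $\mathcal{P}_+(G)$. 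Applying the elementary scaling rule $\textrm{div}_{\phi\mu}(\mathcal{G})=\textrm{div}_\mu(\mathcal{G})+\mathcal{G}(\log\phi)$ with $\mu=d\textrm{vol}$ and $\phi=\Pi(\rho)^{-1/2}$ yields $\textrm{div}_W\mathcal{G}=\nabla_\rho\cdot\mathcal{G}-\frac12\mathcal{G}^{\ts}d_\rho\log\Pi(\rho)$; regrouping by the product rule recovers the equivalent form $\Pi(\rho)^{1/2}\nabla_\rho\cdot(\Pi(\rho)^{-1/2}\mathcal{G})$.

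For the Laplacian I would substitute $\mathcal{G}=L(\rho)d_\rho\mathcal{F}$ into this formula, so the first displayed line is immediate. For the second line, expand $\nabla_\rho\cdot(L(\rho)d_\rho\mathcal{F})=\sum_{k,l}(\partial_{\rho_k}L_{kl})\partial_{\rho_l}\mathcal{F}+\sum_{k,l}L_{kl}\partial_{\rho_k}\partial_{\rho_l}\mathcal{F}$ and observe that the first sum vanishes: because $L(\rho)$ is the weighted graph Laplacian with edge weights $\omega_{ij}\frac{\rho_i+\rho_j}{2}$, a short computation using the symmetry $\omega_{ij}=\omega_{ji}$ gives $\sum_k\partial_{\rho_k}L_{kl}(\rho)=0$ for each $l$. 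Hence $\nabla_\rho\cdot(L(\rho)d_\rho\mathcal{F})=\textrm{tr}(L(\rho)d^2_\rho\mathcal{F})$, and the symmetry of $L(\rho)$ turns the remaining term into $-\frac12(d_\rho\mathcal{F})^{\ts}L(\rho)d_\rho\log\Pi$. The third line is then a translation into edge sums: writing $L(\rho)=D^{\ts}\Theta(\rho)D$, the quadratic-form identity $a^{\ts}L(\rho)b=\sum_{(i,j)\in\vec{E}}\frac{\rho_i+\rho_j}{2}\nabla_{ij}a\,\nabla_{ij}b$ applied with $a=d_\rho\mathcal{F}$, $b=d_\rho\log\Pi$ produces the final term, while grouping $\sum_{k,l}L_{kl}\partial_{\rho_k}\partial_{\rho_l}\mathcal{F}$ by edges produces $\sum_{(i,j)\in\vec{E}}\omega_{ij}\frac{\rho_i+\rho_j}{2}(\frac{\partial^2}{\partial\rho_i^2}-2\frac{\partial^2}{\partial\rho_i\partial\rho_j}+\frac{\partial^2}{\partial\rho_j^2})\mathcal{F}$.

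The main obstacle is conceptual rather than computational: one must justify carefully that the intrinsic divergence with respect to $g_W$ can be evaluated through the ambient Euclidean divergence $\nabla_\rho\cdot$. This rests on the facts that the divergence depends only on the volume form and that the normal $u_0$ is constant, so that the ambient divergence restricts correctly to tangent fields on the hyperplane. Once this reduction is secured, the only substantive ingredient left is the vanishing identity $\sum_k\partial_{\rho_k}L_{kl}=0$, which makes $\nabla_\rho\cdot(L(\rho)d_\rho\mathcal{F})$ collapse to the Hessian trace, after which the passage to edge sums is routine bookkeeping.
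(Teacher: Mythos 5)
Your proposal is correct, and its second half (the Laplace--Beltrami computation) coincides with the paper's; the difference lies in how the divergence formula is extracted from the volume form. The paper characterizes $\textrm{div}_W$ weakly, as the negative adjoint of the gradient in $L^2(d\textrm{vol}_W)$: it pairs $\mathcal{G}$ against a compactly supported test function $\mathcal{F}$, uses $g_W(\nabla_W\mathcal{F},\mathcal{G})=d_\rho\mathcal{F}^{\ts}\mathcal{G}$ (valid because $L(\rho)L(\rho)^{\dagger}$ acts as the identity on tangent fields), and integrates by parts against $\Pi(\rho)^{-1/2}d\textrm{vol}$. You instead invoke the pointwise characterization $\mathcal{L}_{\mathcal{G}}(d\textrm{vol}_W)=(\textrm{div}_W\mathcal{G})\,d\textrm{vol}_W$ together with the conformal scaling rule $\textrm{div}_{\phi\mu}(\mathcal{G})=\textrm{div}_\mu(\mathcal{G})+\mathcal{G}(\log\phi)$, $\phi=\Pi(\rho)^{-1/2}$. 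The two characterizations are equivalent, so both proofs are sound; your route avoids test functions and boundary considerations entirely and makes explicit the tangency/extension subtlety (constancy of the normal $u_0$, well-definedness of $\nabla_\rho\cdot$ on tangent-valued fields) that the paper uses silently, while the paper's weak formulation has the advantage of being exactly the duality that gets reused downstream, e.g.\ in deriving the Fokker--Planck equation and its Gibbs stationary measure on $(\mathcal{P}_+(G),g_W)$ in the examples section. For the Laplacian, your identity $\sum_k\partial_{\rho_k}L_{kl}(\rho)=0$ is precisely the cancellation the paper obtains by relabeling $i\leftrightarrow j$ in the edge sums (the first-order terms drop by the symmetry $\omega_{ij}=\omega_{ji}$), just packaged as a column-sum identity; the remaining translation of $\textrm{tr}(L(\rho)d^2_\rho\mathcal{F})$ and $(d_\rho\mathcal{F})^{\ts}L(\rho)d_\rho\log\Pi(\rho)$ into sums over $\vec{E}$ is the same bookkeeping in both arguments.
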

\begin{proof}
Consider a test function $\mathcal{F}(\rho)\in C^{\infty}(\mathcal{P}_+(G))$ with compact support in $\mathcal{P}_+(G)$. Then 
\begin{equation*}
\begin{split}
\int_{\mathcal{P}_+(G)}g_W(\nabla_W\mathcal{F}(\rho), \mathcal{G}(\rho))d\textrm{vol}_W=&\int_{\mathcal{P}_+(G)}d_\rho\mathcal{F}(\rho)\cdot \mathcal{G}(\rho)\Pi(\rho)^{-\frac{1}{2}}d\textrm{vol}\\
=&-\int_{\mathcal{P}_+(G)}\mathcal{F}(\rho)\nabla_\rho\cdot(\mathcal{G}(\rho)\Pi(\rho)^{-\frac{1}{2}})d\textrm{vol}\\
=&-\int_{\mathcal{P}_+(G)}\mathcal{F}(\rho)\Pi(\rho)^{\frac{1}{2}}\nabla_\rho\cdot(\mathcal{G}(\rho)\Pi(\rho)^{-\frac{1}{2}}) d\textrm{vol}_W,
\end{split}
\end{equation*}
which finishes the proof.

From the divergence operator and noticing $\nabla_W\mathcal{F}(\rho)=L(\rho)d_\rho\mathcal{F}(\rho)$, we have
\begin{equation*}
\Delta_W\mathcal{F}(\rho)=\textrm{div}_W(\nabla_W\mathcal{F}(\rho))=\nabla_{\rho}\cdot(L(\rho)d_\rho\mathcal{F}(\rho))+(d_\rho\mathcal{F}(\rho))^{\ts}L(\rho)(d_\rho\log\Pi(\rho)^{-\frac{1}{2}}).
\end{equation*}
Since 
\begin{equation*}
\begin{split}
&\nabla_{\rho}\cdot(L(\rho)d_\rho\mathcal{F}(\rho))\\=&\sum_{i=1}^n\frac{\partial}{\partial \rho_i}\big(\sum_{j\in N(i)}(\frac{\partial}{\partial\rho_i}-\frac{\partial}{\partial\rho_j})\mathcal{F}(\rho)\theta_{ij}(\rho)\omega_{ij}\big)\\
=&\sum_{i=1}^n\sum_{j\in N(i)}(\frac{\partial^2}{\partial\rho^2_i}-\frac{\partial^2}{\partial\rho_j\partial\rho_i})\mathcal{F}(\rho)\theta_{ij}(\rho)\omega_{ij}-\frac{1}{2}\sum_{i=1}\sum_{j\in N(i)}(\frac{\partial}{\partial\rho_i}-\frac{\partial}{\partial\rho_j})\mathcal{F}(\rho)\omega_{ij}\\
=&\sum_{(i,j)\in \vec{E}}(\frac{\partial^2}{\partial\rho^2_i}-\frac{\partial^2}{\partial\rho_j\partial\rho_i})\mathcal{F}(\rho)\theta_{ij}(\rho)\omega_{ij}-\frac{1}{2}\sum_{(i,j)\in \vec{E}}(\frac{\partial}{\partial\rho_i}-\frac{\partial}{\partial\rho_j})\mathcal{F}(\rho)\omega_{ij}\\
+&\sum_{(j,i)\in \vec{E}}(\frac{\partial^2}{\partial\rho^2_i}-\frac{\partial^2}{\partial\rho_j\partial\rho_i})\mathcal{F}(\rho)\theta_{ij}(\rho)\omega_{ij}-\frac{1}{2}\sum_{(j,i)\in \vec{E}}(\frac{\partial}{\partial\rho_i}-\frac{\partial}{\partial\rho_j})\mathcal{F}(\rho)\omega_{ij}\qquad \textrm{Relabel $i$ by $j$.}\\
=&\sum_{(i,j)\in \vec{E}}(\frac{\partial^2}{\partial\rho^2_i}-2\frac{\partial^2}{\partial\rho_j\partial\rho_i}+\frac{\partial^2}{\partial\rho_j^2})\mathcal{F}(\rho)\theta_{ij}(\rho)\omega_{ij}\\
=&\textrm{tr}(L(\rho)\cdot d^2_\rho\mathcal{F}(\rho)),
\end{split}
\end{equation*} 
and $(d_\rho\mathcal{F}(\rho))^{\ts}L(\rho)(d_\rho\log\Pi(\rho)^{-\frac{1}{2}})=\sum_{(i,j)\in \vec{E}}\nabla_{ij}d_\rho\log\Pi(\rho)^{-\frac{1}{2}}\cdot\nabla_{ij}d_\rho\mathcal{F}(\rho)\theta_{ij}(\rho)$, we prove the result.
\end{proof}

In the last, we find the Hessian operator on $(\mathcal{P}_+(G), g_W)$, i.e. $\textrm{Hess}_W(\cdot, \cdot)\colon C^{\infty}(T\mathcal{P}_+(G))\times C^{\infty}(T\mathcal{P}_+(G))\rightarrow C^{\infty}(\mathcal{P}_+(G))$.
\begin{proposition}[Hessian operator]
Given $\sigma_1,\sigma_2\in T_\rho\mathcal{P}_+(G)$, then
\begin{equation}\label{hess}
\begin{split}
\textrm{Hess}_{W}\mathcal{F}(\rho)(\sigma_1, \sigma_2)=&\sigma_1^{\ts}d^2_\rho\mathcal{F}(\rho)\sigma_2+ \frac{1}{2}\Big\{d_\rho\mathcal{F}(\rho)^{\ts}L(\sigma_1)L(\rho)^{\dd}\sigma_2+d_\rho\mathcal{F}(\rho)^{\ts}L(\sigma_2)L(\rho)^{\dd}\sigma_1\\
&\hspace{2.1cm}\quad- d_\rho\mathcal{F}(\rho)^{\ts}L(\rho)\Big(\nabla_GL(\rho)^{\dd}\sigma_1\circ\nabla_GL(\rho)^{\dd}\sigma_2\Big)\Big\}.
\end{split}
\end{equation}
\end{proposition}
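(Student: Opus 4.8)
The plan is to evaluate the intrinsic Hessian directly from the Levi-Civita connection established in Proposition~\ref{Levi}. For a smooth function $\mathcal{F}$ and \emph{constant} tangent vectors $\sigma_1,\sigma_2\in T_\rho\mathcal{P}_+(G)$, the Hessian reads
\begin{equation*}
\textrm{Hess}_W\mathcal{F}(\rho)(\sigma_1,\sigma_2)=\sigma_1\big(\sigma_2\mathcal{F}\big)(\rho)-\big(\nabla^W_{\sigma_1}\sigma_2\big)\mathcal{F}(\rho)\ ,
\end{equation*}
where each tangent vector acts as a Euclidean directional derivative, $(\sigma\mathcal{F})(\rho)=\frac{d}{dt}\big|_{t=0}\mathcal{F}(\rho+t\sigma)$. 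This expression is symmetric and well defined because, by Proposition~\ref{commu}, the commutator $[\sigma_1,\sigma_2]_W$ vanishes, so the torsion-free connection makes the two mixed second derivatives coincide.

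First I would compute the second-order term exactly as in the proof of Proposition~\ref{commu}. Differentiating twice along the constant directions gives
\begin{equation*}
\sigma_1\big(\sigma_2\mathcal{F}\big)(\rho)=\frac{d}{dt}\Big|_{t=0}\frac{d}{ds}\Big|_{s=0}\mathcal{F}(\rho+t\sigma_1+s\sigma_2)=\sigma_1^{\ts}d^2_\rho\mathcal{F}(\rho)\sigma_2\ ,
\end{equation*}
which is precisely the Euclidean Hessian term appearing in \eqref{hess}.

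Next I would expand the connection term. Since $\nabla^W_{\sigma_1}\sigma_2\in T_\rho\mathcal{P}_+(G)$ is itself a tangent vector, its action on $\mathcal{F}$ is the directional derivative $\big(\nabla^W_{\sigma_1}\sigma_2\big)\mathcal{F}(\rho)=d_\rho\mathcal{F}(\rho)^{\ts}\nabla^W_{\sigma_1}\sigma_2$. Substituting the explicit formula from Proposition~\ref{Levi},
\begin{equation*}
\nabla^W_{\sigma_1}\sigma_2=-\tfrac{1}{2}\big[L(\sigma_1)L(\rho)^{\dagger}\sigma_2+L(\sigma_2)L(\rho)^{\dagger}\sigma_1\big]+\tfrac{1}{2}L(\rho)\big(\nabla_GL(\rho)^{\dagger}\sigma_1\circ\nabla_GL(\rho)^{\dagger}\sigma_2\big)\ ,
\end{equation*}
and carrying the overall minus sign through, the two bracketed terms reappear with prefactor $+\tfrac12$ and the $\circ$-term with prefactor $-\tfrac12$. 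Collecting these with the second-order term reproduces the right-hand side of \eqref{hess} verbatim.

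I expect no real obstacle: once Proposition~\ref{Levi} and the vanishing of the commutator are in place, the proof is essentially a single substitution. The only point deserving care is to confirm that the naive directional-derivative readings of $\sigma_1(\sigma_2\mathcal{F})$ and of $(\nabla^W_{\sigma_1}\sigma_2)\mathcal{F}$ are the correct intrinsic quantities. This is guaranteed by working in the fixed affine (constant-vector-field) coordinates on the simplex, where the coordinate vector fields commute and all the nontrivial geometry of $g_W$ is encoded in $\nabla^W$ rather than in the coordinate frame.
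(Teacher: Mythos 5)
Your proposal is correct and takes essentially the same route as the paper: the paper defines $\textrm{Hess}_W\mathcal{F}(\rho)(\sigma_1,\sigma_2)=g_W(\sigma_1,\nabla^W_{\sigma_2}\nabla_W\mathcal{F}(\rho))$ and expands it by metric compatibility into exactly your two terms, namely the Euclidean second derivative $\sigma_1^{\ts}d^2_\rho\mathcal{F}(\rho)\sigma_2$ (computed along constant directions, as you do) minus $d_\rho\mathcal{F}(\rho)^{\ts}\nabla^W_{\sigma_2}\sigma_1$, and then substitutes Proposition~\ref{Levi}. Your starting point $\textrm{Hess}_W\mathcal{F}(\sigma_1,\sigma_2)=\sigma_1(\sigma_2\mathcal{F})-(\nabla^W_{\sigma_1}\sigma_2)\mathcal{F}$ is the same definition written one metric-compatibility step earlier, and the symmetry justification via Proposition~\ref{commu} matches the paper's implicit use of the torsion-free connection for constant fields.
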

\begin{proof}
From the definition of Hessian on a manifold, we have
\begin{equation}\label{dH}
\begin{split}
\textrm{Hess}_W\mathcal{F}(\rho)(\sigma_1,\sigma_2)=&g_W(\sigma_1, \nabla_{\sigma_2}\nabla_W\mathcal{F}(\rho))\\
=&\sigma_2\big(g_W(\sigma_1,\nabla_W\mathcal{F}(\rho))\big)-g_W(\nabla_{\sigma_2}\sigma_1, \nabla_W\mathcal{F}(\rho)).
\end{split}
\end{equation}
Denote $\frac{d\rho(t)}{dt}|_{t=0}=\sigma_2$. Then 
\begin{equation*}
\sigma_2\big(g_W(\sigma_1,\nabla_W\mathcal{F}(\rho))\big)=\frac{d}{dt}|_{t=0}\big(\sigma_1^{\ts}d_\rho\mathcal{F}(\rho)\big)=\sigma_1^{\ts}d^2_\rho\mathcal{F}(\rho)\sigma_2.
\end{equation*}
Substituting the above formula and Proposition \ref{Levi} into \eqref{dH}, we finish the proof.
\end{proof}
As in a Riemannian manifold $(\mathcal{P}_+(G), g_W)$, given the orthonormal basis $X_i=\sqrt{\lambda_i(\rho)}u_i(\rho)$, $i=1,\cdots,n-1$, it is clear that
\begin{equation*}
\Delta_W\mathcal{F}(\rho)=\sum_{i=1}^{n-1}\textrm{Hess}_{W}\mathcal{F}(\rho)(X_i, X_i).
\end{equation*}

\section{Riemannian calculus on density manifold}\label{section5}
In this section, the approach in previous sections guides us to derive all geometry formulas, in the sense of \cite{IM}, on the space of probability densities supported on $M$. We present the results for the completeness of this paper. 

Suppose $(M,d_M)$ is a smooth, compact, connected, $d$-dimensional Riemannian manifold without boundary. $d_M$ is the Riemannian metric.
A volume element is a positive $d$-form represented by $dx$. The total volume of manifold is $\textrm{vol}(M)$. The gradient, divergence operator in $M$ is denoted by $\nabla$, $\nabla\cdot$ respectively. $\Delta$ is the Laplacian-Beltermi operator on $M$. 
\subsection{Review of density manifold}
We briefly review the $L^2$-Wassertein metric in continuous states $M$. 
Denote $\mathcal{P}_2(M)$ as the set of probability density functions with finite second moment. Given $\rho^0(x)$ and $\rho^1(x)\in \mathcal{P}_2(M)$, the $L^2$-Wasserstein distance between $\rho^0$ and $\rho^1$ is given by
\begin{equation*}
(W(\rho^0, \rho^1))^2=\inf_{\pi\in \Pi(\rho^0, \rho^1)}{\int_{M}\int_Md_M(x,y)^2\pi(dx,dy)},
\end{equation*}
where $\Pi$ are joint measures supported on $M\times M$ with marginals $\rho^0$ and $\rho^1$. Equivalently, the $L^2$-Wasserstein metric can be written as the variational problem of the probability average of kinetic energy. Denote the path $\rho_t=\rho(t,x)$ connecting $\rho^0(x)$ and $\rho^1(x)\in \mathcal{P}_+(M)$, then
\begin{equation}\label{BB1}
(W(\rho^0,\rho^1))^2=\inf_{v_t,\rho_t}~~\big\{{\int_0^1\int_{M} v_t^2\rho_t dx dt} \colon \frac{\partial \rho_t}{\partial t}+\nabla\cdot (\rho_tv_t)=0,~\rho_0=\rho^0,~ \rho_1=\rho^1\big\},
 \end{equation}
where the infimum is taken among all Borel vector field functions $v_t=v(t,x)\in T_xM$ and density path $\rho_t$. Here the equivalence between \eqref{BB1} and the above linear programming problem can be shown by the duality argument and the Hopf-Lax formula in $(M, d_M)$; see details in \cite{vil2008}. 

From now on, we focus on \eqref{BB1}. Notice that $\mathcal{P}_2(M)$ is an infinite dimensional manifold with boundary. Here the boundary refers the set in which the density function is zero at certain point. For better illustration, consider the space of positive smooth density functions supported on $M$,
\begin{equation*}
\mathcal{P}_+(M)=\{\rho(x)\in C^{\infty}(M)\colon \rho(x)>0,~\int_M\rho(x)dx=1\}\subset \mathcal{P}_2(M).
\end{equation*}
In literature \cite{Lafferty}, $\mathcal{P}_+(M)$ is called density manifold. The following linear operator is needed for describing the geometry of density manifold. 
\begin{definition}
Given $a(x)\in C^{\infty}(M)$, define the weighted Laplacian operator $\Delta_a\colon C^{\infty}(M)\rightarrow C^{\infty}(M)$,
\begin{equation*}
\Delta_a\Phi(x)=\nabla\cdot(a(x)\nabla\Phi(x)),\quad \Phi(x)\in C^{\infty}(M).
\end{equation*}
\end{definition}
Let $a(x)=\rho(x)\in \mathcal{P}_+(M)$, the elliptic operator of $\Delta_\rho$ can be used to show the Hodge decomposition on $M$. For any smooth vector field $v(x)\in T_xM$, there exists a potential $\Phi(x)\in C^{\infty}(M)$ module constant shrift, and a divergence free vector field $\Psi(x)\in T_{x}M$, such that 
\begin{equation*}
v(x)=\nabla\Phi(x)+\Psi(x),\quad \nabla\cdot(\rho(x)\Psi(x))=0.
\end{equation*}
In other words, $\int_{M}v(x)^2\rho(x)dx=\int_M[(\nabla\Phi(x))^2+\Psi(x)^2]\rho(x)dx\geq \int_M(\nabla\Phi(x))^2\rho(x)dx$. Thus the metric $W$ defined in \eqref{metric_BB} is equivalent to 
\begin{equation*} 
\begin{split}
\left(W( \rho^0, \rho^1)\right)^2=&\inf_{\Phi_t\colon\rho_0= \rho^0,~\rho_1= \rho^1}\{\int_0^1\int_M(\nabla\Phi_t)^2\rho_tdx dt ~:~\partial_t\rho_t+\nabla\cdot( \rho_t \nabla\Phi_t)=0\}\\
=&\inf_{\Phi_t\colon \rho_0= \rho^0,~\rho_1= \rho^1}\{\int_0^1\int_M\Phi_t(-\Delta_{\rho_t}\Phi_t)dx dt ~:~\partial_t\rho_t=-\Delta_{\rho_t}\Phi_t\}.
\end{split}
\end{equation*}

Denote the tangent space at $\rho \in \mathcal{P}_+(M)$ by 
\begin{equation*}
T_\rho \mathcal{P}_+({M})=\{\sigma(x)\in C^{\infty}(M)\colon\int_{M}\sigma(x)dx=0\}.
\end{equation*}
Given $\sigma_1,\sigma_2\in T_\rho\mathcal{P}_+(M)$, the inner product $g_{W}(\cdot, \cdot):T_\rho\mathcal{P}_+(M)\times T_\rho\mathcal{P}_+(M)  \rightarrow \mathbb{R}$ is defined by
\begin{equation*}
g_{W}(\sigma_1,\sigma_2):=\int_{M}{\sigma_1}(x) (-\Delta_\rho)^{\dagger}\sigma_2(x)dx,
\end{equation*}
where $(-\Delta_\rho)^{\dd}\colon T_\rho\mathcal{P}_+(M)\rightarrow  T_\rho\mathcal{P}_+(M)$ is the pseudo inverse operator of $-\nabla\cdot(\rho(x)\nabla)$. 

If we denote $\Phi_i(x)\in C^{\infty}(M)$ modulo additive constants, such that $(-\Delta_\rho\Phi_i)(x)=\sigma_i(x)$, $i=1,2$. 
Then \begin{equation*}
\begin{split}
g_{W}(\sigma_1,\sigma_2)=&\int_{M}  \Phi_1(x)(-\Delta_\rho)(-\Delta_\rho)^{\dagger}(-\Delta_\rho)\Phi_2(x)dx\\
=&\int_{M}  \Phi_1(x)(-\nabla\cdot(\rho(x)\nabla\Phi_2(x)))dx\\
=&\int_{M}  \nabla\Phi_1(x)\nabla\Phi_2(x)\rho(x)dx.
\end{split}
\end{equation*}

Denote the path $\rho_t=\rho(t,x)$ connecting $\rho^0(x)$ and $\rho^1(x)\in \mathcal{P}_+(M)$, and let $\partial_t\rho_t=-\Delta_\rho\Phi_t=-\nabla\cdot(\rho_t\nabla\Phi_t)$. Then the metric \eqref{BB1} can be represented by
\begin{equation*}
\begin{split}
(W(\rho^0,\rho^1))^2=&\inf_{\rho_t\in\mathcal{P}_+(M)}\big\{\int_0^1g_W(\partial_t\rho_t, \partial_t\rho_t)dt\colon \rho_0=\rho^0,\rho_1=\rho^1\big\}.
\end{split}
\end{equation*}
By the arc-length time reparameterization, one can simply denote the $L^2$-Wasserstein metric by 
\begin{equation*}
\begin{split}
W(\rho^0,\rho^1)=&\inf_{\rho_t\in\mathcal{P}_+(M)}\big\{\int_0^1\sqrt{g_W(\partial_t\rho_t, \partial_t\rho_t)}dt\colon \rho_0=\rho^0,\rho_1=\rho^1\big\}.
\end{split}
\end{equation*}

\subsection{Riemannian calculus}
Similar as in section \ref{external}, we first construct a Riemannian metric in the positive measure space, and embed the density manifold as its submanifold. The geometry structures of density manifold follows directly from the ones in the positive measure space.

Consider
\begin{equation*}
\mathcal{M}_+(M)=\{\mu(x)\in C^{\infty}(M)\colon\mu(x)>0\}.
\end{equation*}
Thus $\mathcal{P}_+(M)\subset \mathcal{M}_+(M)$. Denote the tangent space at $\mu\in \mathcal{M}_+(M)$,  
\begin{equation*}
T_\mu\mathcal{M}_+(M)=\{A(x)\in C^{\infty}(M)\},
\end{equation*}
and 
\begin{equation*}
T\mathcal{M}_+(M)=\{(\mu(x),A(x))\colon \mu(x)\in \mathcal{M}_+(M),~A(x)\in C^{\infty}(M)\}.
\end{equation*}
We define a Riemannian inner product on the infinite dimensional manifold $\mathcal{M}_{+}(M)$.

Given $\mu\in \mathcal{M}_+(M)$ and $A(x)\in C^{\infty}(M)$, denote a positive definite operator $g(\mu)\colon C^\infty(M)\rightarrow C^\infty(M)$ by \begin{equation}\label{gmu}
(g(\mu)A)(x)=\big(-\Delta_\mu\big)^{\dd}(A(x)-\int_{M}A(y)dy)+\int_{M}A(y)dy.
\end{equation}
\begin{definition}[Inner product in $\mathcal{M}_+(M)$]
Define the inner product $g_{\mathcal{M}}\colon C^{\infty}(T\mathcal{M}_+(M))\times C^{\infty}(T\mathcal{M}_+(M))\rightarrow C^{\infty}(\mathcal{M}_+(M))$ by
\begin{equation*}
\begin{split}
g_{\mathcal{M}}(A_1(x), A_2(x))=&\int_M A_1(x) (g(\mu)A_2)(x)dx\\
=&\int_{M}(A_1(x)-\int_{M}A(y)dy)(-\Delta_\mu)^{\dd}(A_2(x)-\int_MA_2(y)dy)dx\\
&+\int_{M}A_1(y)dy\int_{M}A_2(y)dy,
\end{split}
\end{equation*}
for any $A_1(x)$, $A_2(x)\in T_\mu\mathcal{M}_+(M)$.
\end{definition}
It is clear that $(\mathcal{M}_+(M), g_{\mathcal{M}})$ is an infinite dimensional Riemannian manifold. We next show that $(\mathcal{M}_+(M), g_{\mathcal{M}})$ induces a metric in its submanifold $(\mathcal{P}_+(M), W)$. Denote $\imath\colon\mathcal{P}_+(M)\rightarrow \mathcal{M}_+(M)$ a natural inclusion by $\imath(\rho)=\rho$, then $\imath$ induces a Riemannian metric $W$ on $\mathcal{P}_+(M)$ via pullback, i.e. for any $\sigma_1(x),\sigma_2(x)\in T_\rho\mathcal{P}_+(M)$, \begin{equation*}\label{relation_C}
g_W(\sigma_1,\sigma_2)=g_{\mathcal{M}}(\sigma_1,\sigma_2)=\int_{M}\sigma_1(x)(-\Delta_\rho)^{\dd}\sigma_2(x)dx.
\end{equation*}

We are ready to find the Riemannian structure of $(\mathcal{P}_+(M), W)$ by the one in $(\mathcal{M}_+(M), g_{\mathcal{M}})$. %For example, we derive the gradient vector in $(\mathcal{P}_+(M), W)$.
\begin{proposition}[Gradient]
Consider $\mathcal{F}(\rho)\in C^{\infty}(\mathcal{P}_+(M))$, denote its gradient operators in $(\mathcal{M}_{+}(M),g_{\mathcal{M}})$ and $(\mathcal{P}_+(M), W)$ by 
$\nabla_g\mathcal{F}(\rho)\in T_{\rho}\mathcal{M}_+(M)$, $\nabla_W\mathcal{F}(\rho)\in T_{\rho}\mathcal{P}_+(M)$, respectively. Then \begin{equation*}
\nabla_W\mathcal{F}(\rho)=\nabla_g\mathcal{F}(\rho)-g(\nabla_g\mathcal{F}(\rho), u_0)u_0=-\nabla\cdot(\rho(x)\nabla \frac{\delta}{\delta\rho(x)}\mathcal{F}(\rho)),
\end{equation*}
where $\frac{\delta}{\delta\rho(x)}$ represents the $L^2$ first variation and $u_0(x)\equiv1$, is the orthogonal projection related to $g$ on $\mathcal{M}_+(M)$, of the restriction of $\nabla_g\mathcal{F}(\rho)$ to $\mathcal{P}_+(M)$.
\end{proposition}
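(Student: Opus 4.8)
The plan is to transcribe, line by line, the proof of the finite-dimensional Gradient proposition, replacing the matrix $L(\rho)^{\dd}+u_0u_0^{\ts}$ by the operator $g(\mu)$ of \eqref{gmu} and the Euclidean derivative $d_\rho\mathcal{F}$ by the $L^2$ first variation $\frac{\delta}{\delta\rho(x)}\mathcal{F}(\rho)$. The only structural facts needed are the analogues of $g(\mu)^{-1}=L(\mu)+u_0u_0^{\ts}$ and $g(\rho)u_0=u_0$.

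First I would record the inverse operator. Writing $P$ for the $L^2$-orthogonal projection onto $T_\rho\mathcal{P}_+(M)=\{f\colon\int_M f\,dx=0\}$ and $Q=\mathrm{Id}-P=u_0\langle u_0,\cdot\rangle$ for the projection onto constants (here $\langle\cdot,\cdot\rangle$ is the $L^2$ inner product), the definition \eqref{gmu} reads $g(\mu)=(-\Delta_\mu)^{\dd}P+Q$, since $(-\Delta_\mu)^{\dd}$ is defined on and valued in the zero-mean subspace. Because $-\Delta_\mu$ annihilates constants, its inverse is
\begin{equation*}
g(\mu)^{-1}=(-\Delta_\mu)P+Q=-\Delta_\mu+Q\ ,\qquad (g(\mu)^{-1}A)(x)=-\nabla\cdot(\mu(x)\nabla A(x))+u_0\langle u_0,A\rangle\ ,
\end{equation*}
the exact analogue of $g(\mu)^{-1}=L(\mu)+u_0u_0^{\ts}$; in particular $g(\rho)u_0=u_0$.

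Next I would compute $\nabla_g\mathcal{F}(\rho)$ and the unit normal. By definition of the Riemannian gradient, $g_{\mathcal{M}}(\nabla_g\mathcal{F}(\rho),A)=\int_M\frac{\delta}{\delta\rho(x)}\mathcal{F}(\rho)\,A(x)\,dx$ for all $A$; since $g_{\mathcal{M}}(\cdot,\cdot)=\int_M\cdot\,g(\rho)\cdot\,dx$ with $g(\rho)$ self-adjoint, this forces $g(\rho)\nabla_g\mathcal{F}(\rho)=\frac{\delta}{\delta\rho}\mathcal{F}(\rho)$, hence $\nabla_g\mathcal{F}(\rho)=g(\rho)^{-1}\frac{\delta}{\delta\rho}\mathcal{F}(\rho)=-\Delta_\rho\frac{\delta}{\delta\rho}\mathcal{F}(\rho)+u_0\langle u_0,\frac{\delta}{\delta\rho}\mathcal{F}(\rho)\rangle$. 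From $g(\rho)u_0=u_0$ one gets, for every $\sigma\in T_\rho\mathcal{P}_+(M)$, that $g_{\mathcal{M}}(\sigma,u_0)=\langle\sigma,u_0\rangle=\frac{1}{\sqrt{\mathrm{vol}(M)}}\int_M\sigma\,dx=0$ and $g_{\mathcal{M}}(u_0,u_0)=\langle u_0,u_0\rangle=1$, so $u_0$ is the $g_{\mathcal{M}}$-unit normal of $\mathcal{P}_+(M)$ in $\mathcal{M}_+(M)$, exactly as the constant vector $u_0$ was in the discrete case. The tangential projection is therefore $\nabla_W\mathcal{F}(\rho)=\nabla_g\mathcal{F}(\rho)-g_{\mathcal{M}}(\nabla_g\mathcal{F}(\rho),u_0)u_0$, and using $g_{\mathcal{M}}(\nabla_g\mathcal{F}(\rho),u_0)=\langle g(\rho)\nabla_g\mathcal{F}(\rho),u_0\rangle=\langle\frac{\delta}{\delta\rho}\mathcal{F}(\rho),u_0\rangle$ the two constant contributions cancel, leaving $\nabla_W\mathcal{F}(\rho)=-\Delta_\rho\frac{\delta}{\delta\rho}\mathcal{F}(\rho)=-\nabla\cdot(\rho(x)\nabla\frac{\delta}{\delta\rho(x)}\mathcal{F}(\rho))$, which is the claim.

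The main obstacle is not the algebra---which is a faithful copy of the discrete computation---but the infinite-dimensional bookkeeping around the pseudo-inverse. One must keep in mind that $(-\Delta_\mu)^{\dd}$ lives only on the zero-mean subspace, so every occurrence of $g(\mu)^{-1}$ and every ``constant plus zero-mean'' splitting has to respect the decomposition $C^\infty(M)=\mathbb{R}u_0\oplus T_\rho\mathcal{P}_+(M)$. Phrasing $g(\mu)$ and $g(\mu)^{-1}$ through the projections $P$ and $Q$, as above, is the cleanest way to make the cancellation of the constant terms transparent and to avoid the domain subtleties of the unbounded operator $-\Delta_\rho$; the formula in \eqref{gmu} is recovered upon identifying $u_0\langle u_0,\cdot\rangle$ with $\int_M\cdot\,dy$ (i.e. normalizing $\mathrm{vol}(M)=1$).
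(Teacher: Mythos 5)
Your proof is correct and takes essentially the same route as the paper's: compute the inverse metric operator $g(\rho)^{-1}A=-\Delta_\rho A+(\text{projection onto constants})$, write $\nabla_g\mathcal{F}(\rho)=g(\rho)^{-1}\frac{\delta}{\delta\rho}\mathcal{F}(\rho)$, verify that $u_0$ is $g_{\mathcal{M}}$-orthogonal to $T_\rho\mathcal{P}_+(M)$, and project, with the constant contributions cancelling. Your $P$/$Q$ projection bookkeeping is only a cleaner phrasing of the paper's computation, and your closing observation is apt: the paper's definition of $g(\mu)$ and its inverse formula implicitly normalize $\mathrm{vol}(M)=1$ (otherwise $A-\int_M A\,dy$ is not zero-mean and the pseudo-inverse identities used there do not literally apply).
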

\begin{proof}
It is clear that 
\begin{equation*}
\nabla_g\mathcal{F}(\rho)(x)=(g^{-1}(\rho)\delta_{\rho}\mathcal{F}(\rho))(x)
\end{equation*}
We show that $g^{-1}(\rho)\colon C^{\infty}(M)\rightarrow C^{\infty}(M)$ defined by
\begin{equation*}
(g^{-1}(\rho)A)(x)=-\Delta_\rho A(x)+\int_{M}A(y)dy,\quad \textrm{for any $A(x)\in C^{\infty}(M)$.}
\end{equation*}
We show that $g^{-1}(\rho)$ is the inverse of operator $g(\rho)$ by 
\begin{equation*}
\begin{split}
g^{-1}(\rho)(g(\rho)A)(x)=&(-\Delta_\rho)(-\Delta_\rho)^{\dd}\big(A(x)-\int_MA(y)dy\big)+\int_MA(y)dy\\
=&A(x)-\int_MA(y)dy+\int_MA(y)dy=A(x).
\end{split}
\end{equation*}
Thus 
\begin{equation*}
\nabla_g\mathcal{F}(\rho)=g(\rho)^{-1}\delta_{\rho}\mathcal{F}(\rho)=-\Delta_\rho \delta_{\rho}\mathcal{F}(\rho)+\int_M \frac{\delta}{\delta\rho(y)}\mathcal{F}(\rho)dy .
\end{equation*}
Denote $u_0(x)=1$. For any $\sigma\in T_\rho\mathcal{P}_+(M)$, then
\begin{equation*}
\begin{split}
\int_M\sigma(x) (g(\rho)u_0)(x)dx=&\int_M\sigma(x)\big(-\Delta_\rho^{\dd}u_0(x)+\int_{M}u_0dy\big)dx \\
=&\textrm{vol}(M)\int_M\sigma(x)dx=0.
\end{split}
\end{equation*}
Thus the unit norm vector at $\rho\in\mathcal{P}_+(G)$ is a constant function $u_0(x)=1$. 
Notice 
\begin{equation*}
g(\nabla_g\mathcal{F}(\rho), u_0)=\int_M(-\Delta_\rho \delta_{\rho}\mathcal{F}(\rho)+\int_M \frac{\delta}{\delta\rho(x)}\mathcal{F}(\rho)dy)dy=\int_M \frac{\delta}{\delta\rho(y)}\mathcal{F}(\rho)dy,
\end{equation*}
then the orthogonal projection of $\nabla_g\mathcal{F}(\rho)$ is
\begin{equation*}
\begin{split}
\nabla_g\mathcal{F}(\rho)-g(\nabla_g\mathcal{F}(\rho), u_0)u_0=&-\Delta_\rho \delta_{\rho}\mathcal{F}(\rho)+\int_M \frac{\delta}{\delta\rho(y)}\mathcal{F}(\rho)dy-\int_M \frac{\delta}{\delta\rho(y)}\mathcal{F}(\rho)dy\\
=&-\Delta_\rho\delta_{\rho}\mathcal{F}(\rho)=-\nabla\cdot(\rho(x)\nabla \frac{\delta}{\delta\rho(x)}\mathcal{F}(\rho)),
\end{split}
\end{equation*}
which finishes the proof.
\end{proof}
We derive the commutator on density manifold, i.e. $[\cdot,\cdot]_W\colon C^{\infty}(T\mathcal{P}_+(M))\times C^{\infty}(T\mathcal{P}_+(M))\rightarrow C^{\infty}(\mathcal{P}_+(M))$.
\begin{proposition}[Commutator]
Given $\sigma_1(x)$, $\sigma_2(x)\in T_\rho\mathcal{P}_+(M)$, then 
\begin{equation*}
[\sigma_1,\sigma_2]_W=0.
\end{equation*}
\end{proposition}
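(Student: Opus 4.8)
The plan is to follow exactly the argument of the discrete commutator (Proposition \ref{commu}), exploiting the fact that, in the affine structure of $\mathcal{P}_+(M)$, the tangent vectors $\sigma_1,\sigma_2$ are \emph{constant} vector fields: they do not depend on the base point $\rho$, since $T_\rho\mathcal{P}_+(M)=\{\sigma\in C^\infty(M):\int_M\sigma\,dx=0\}$ is independent of $\rho$. Consequently the Lie bracket acting on any smooth functional will reduce to a difference of iterated directional derivatives, which collapses to zero by the symmetry of the second $L^2$-variation.

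First I would fix a test functional $\mathcal{F}\in C^\infty(\mathcal{P}_+(M))$ and define the directional derivative along a constant field by
\begin{equation*}
(\sigma_1\mathcal{F})(\rho):=\frac{d}{dt}\Big|_{t=0}\mathcal{F}(\rho+t\sigma_1)=\int_M\frac{\delta}{\delta\rho(x)}\mathcal{F}(\rho)\,\sigma_1(x)\,dx\ .
\end{equation*}
Next I would iterate: differentiating once more along $\sigma_2$ produces the second $L^2$-variation kernel,
\begin{equation*}
\sigma_2(\sigma_1\mathcal{F})(\rho)=\int_M\int_M\frac{\delta^2}{\delta\rho(x)\delta\rho(y)}\mathcal{F}(\rho)\,\sigma_1(y)\,\sigma_2(x)\,dx\,dy\ ,
\end{equation*}
where no extra term arises precisely because $\sigma_1$ is constant in $\rho$, so its own variation vanishes. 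Then, computing $[\sigma_1,\sigma_2]_W\mathcal{F}=\sigma_1(\sigma_2\mathcal{F})-\sigma_2(\sigma_1\mathcal{F})$ and using that the second functional derivative is symmetric, $\frac{\delta^2}{\delta\rho(x)\delta\rho(y)}\mathcal{F}(\rho)=\frac{\delta^2}{\delta\rho(y)\delta\rho(x)}\mathcal{F}(\rho)$, the two double integrals coincide after relabeling $x\leftrightarrow y$, and their difference is zero. Since $\mathcal{F}$ is arbitrary, this yields $[\sigma_1,\sigma_2]_W=0$.

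The one step that requires genuine care — and that I expect to be the main obstacle — is justifying the symmetry of the mixed second variation in the Fr\'echet setting, i.e.\ the infinite-dimensional Clairaut/Schwarz theorem. In finite dimensions this is immediate from $d^2_\rho\mathcal{F}$ being a symmetric bilinear form, as in the discrete proof; here one must instead invoke that $\mathcal{F}$ is twice continuously Fr\'echet differentiable (guaranteed by $\mathcal{F}\in C^\infty(\mathcal{P}_+(M))$), so that the second Fr\'echet derivative is a well-defined symmetric bilinear form on $T_\rho\mathcal{P}_+(M)$ and the two orders of differentiation agree. Once this regularity is in hand, the remaining manipulations — the interchange of differentiation and integration and the relabeling of variables — are routine and formally identical to the discrete case.
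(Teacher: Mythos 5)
Your proposal is correct and follows essentially the same route as the paper's own proof: both exploit the affine structure of $\mathcal{P}_+(M)$ to write the bracket as a difference of iterated directional derivatives $\frac{d}{dt}|_{t=0}\frac{d}{ds}|_{s=0}\mathcal{F}(\rho+t\sigma_1+s\sigma_2)$ in the two orders, express each as a double integral against the second $L^2$-variation kernel, and conclude by the symmetry of that kernel under $x\leftrightarrow y$. Your added remark on justifying the Schwarz symmetry in the Fr\'echet setting is a reasonable point of care that the paper leaves implicit, but it does not change the argument.
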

\begin{proof}
Consider $\mathcal{F}\in C^\infty(\mathcal{P}_+(M))$. Since $\sigma_1(\mathcal{F}(\rho)):=\frac{d}{dt}|_{t=0}\mathcal{F}(\rho+t\sigma_1)$, then 
\begin{equation*}
\begin{split}
[\sigma_1,\sigma_2]_W(\mathcal{F}(\rho))=&\frac{d}{dt}|_{t=0}\frac{d}{ds}|_{s=0}\mathcal{F}(\rho+t\sigma_1+s\sigma_2)-\frac{d}{ds}|_{s=0}\frac{d}{dt}|_{t=0}\mathcal{F}(\rho+t\sigma_1+s\sigma_2)\\
=&\int_{M}\int_M\frac{\delta^2}{\delta\rho(x)\delta\rho(y)}\mathcal{F}(\rho)\sigma_1(x)\sigma_2(y)-\frac{\delta^2}{\delta\rho(x)\delta\rho(y)}\mathcal{F}(\rho)\sigma_1(y)\sigma_2(x) dxdy=0,
\end{split}
\end{equation*}
where $\frac{\delta^2}{\delta\rho(x)\delta\rho(y)}$ is the $L^2$ second variation of $\mathcal{F}(\rho)$.
\end{proof}
Following the commutator, we derive the Levi-Civita connection. 
\begin{proposition}[Levi-Civita connection and Christoffel symbol]\label{Levi}
The operator $$\nabla^{W}_{\cdot}\cdot\colon C^{\infty}(T\mathcal{P}_+(M))\times C^{\infty}(T\mathcal{P}_+(M))\rightarrow C^{\infty}(T\mathcal{P}_+(M))$$ 
is defined by
\begin{equation*}
\begin{split}
\nabla^W_{\sigma_1(x)}\sigma_2(x)=&-\frac{1}{2}\big(\Delta_{\sigma_1}\Delta_\rho^{\dagger}\sigma_2+\Delta_{\sigma_2}\Delta_\rho^{\dagger}\sigma_1+\Delta_\rho(\nabla \Delta_\rho^{\dagger}\sigma_1\cdot\nabla \Delta_\rho^{\dagger}\sigma_2)\big)(x).
\end{split}
\end{equation*}
\end{proposition}
\begin{proof}
We apply the Koszul formula for $\sigma_i\in \mathcal{T}_\rho\mathcal{P}_+(M)$, $i=1,2,3$. Notice $[\sigma_i, \sigma_j]_W=0$, then
\begin{equation}\label{LC}
\begin{split}
&g_W(\nabla_{\sigma_1}^W\sigma_2, \sigma_3)=\frac{1}{2}\Big\{\sigma_1(g_W(\sigma_2,\sigma_3))+\sigma_2(g_W(\sigma_1,\sigma_3))-\sigma_3(g_W(\sigma_1,\sigma_2))\Big\}\\
=&\frac{1}{2}\frac{d}{dt}|_{t=0}\int_M\Big\{\sigma_2(x)(-\Delta_{\rho+t\sigma_1})^{\dagger}\sigma_3(x)+\sigma_1(x)(-\Delta_{\rho+t\sigma_2})^{\dagger}\sigma_3(x)-\sigma_1(x)(-\Delta_{\rho+t\sigma_3})^{\dagger}\sigma_2(x)\Big\}dx.
\end{split}
\end{equation}
%%Since $g_W(\nabla_{\sigma_1}^W\sigma_2, \sigma_3)=(\nabla_{\sigma_1}^W\sigma_2)^{\ts}L(\rho)^{\dagger}\sigma_3$, for any $\sigma_3\in \mathcal{P}_+(G)$, the result is proved.
The following two claims are needed to further derive \eqref{LC}.

\noindent\textbf{Claim 3:}
\begin{equation*}
\frac{d}{dt}|_{t=0}\int_M\sigma_2(x)(-\Delta_{\rho+t\sigma_1}^{\dagger})\sigma_3(x)dx=\int_M\sigma_2(x)\Delta_{\rho}^{\dagger}\Delta_{\sigma_1}\Delta_\rho^{\dagger}\sigma_3(x)dx.
\end{equation*}
\begin{proof}[Proof of Claim 3]
Since 
 \begin{equation*}
0=\frac{d}{dt}(g(\rho+t\sigma_1)g(\rho+t\sigma_1)^{-1})=\frac{d}{dt}g(\rho+t\sigma_1) g(\rho+t\sigma_1)^{-1}+g(\rho+t\sigma_1)\frac{d}{dt}g(\rho+t\sigma_1)^{-1},
\end{equation*}
then \begin{equation}\label{a}
\begin{split}
\frac{d}{dt}g(\rho+t\sigma_1)=&-g(\rho+t\sigma_1)\frac{d}{dt}g(\rho+t\sigma_1)^{\dagger}g(\rho+t\sigma_1)\\
=&-g(\rho+t\sigma_1)\frac{d}{dt}(-\Delta_{\rho+t\sigma_1})g(\rho+t\sigma_1)\\ 
=&g(\rho+t\sigma_1)\Delta_{\sigma_1}g(\rho+t\sigma_1),
\end{split}
\end{equation}
where the second equality is true because the weighted Laplacian operator $\Delta_\rho=\nabla\cdot(\rho(x)\nabla)$ is linear w.r.t. $\rho$. From \eqref{relation}, we have 
\begin{equation}\label{b}
\int_M\sigma_2(x)(-\Delta^{\dagger}_\rho)\sigma_3(x)dx=\int_M\sigma_2(x)\big(g(\rho+t\sigma_1)\sigma_3\big)(x)dx.
\end{equation}
 From \eqref{a} and \eqref{b}, we have \begin{equation*}\label{c}
\begin{split}
\frac{d}{dt}|_{t=0}\int_M\sigma_2(x)(-\Delta_{\rho+t\sigma_1})^{\dagger}\sigma_3(x)dx=&\int_M\sigma_2(x)\frac{d}{dt}|_{t=0}g(\rho+t\sigma_1)\sigma_3(x)dx\\
=&\int_{M} \sigma_2(x)g(\rho) (-\Delta_{\sigma_1}) g(\rho)\sigma_3(x) dx\\
=&\int_{M} \sigma_2(x)\Delta_\rho^{\dd} \Delta_{\sigma_1}\Delta_\rho^{\dd}\sigma_3(x) dx.
\end{split}
\end{equation*}
where the last equality follows from the fact that, for any $\sigma(x)\in T_\rho\mathcal{P}_+(M)$,
\begin{equation*}
g(\rho)\sigma(x)=(-\Delta_\rho)^{\dd}\sigma(x)+\int_{M}\sigma(x)dx=(-\Delta_\rho)^{\dd}\sigma(x).
\end{equation*}
\end{proof}
\noindent\textbf{Claim 4:}
\begin{equation*}
\int_M\sigma_1(x)\Delta_\rho^{\dagger}\Delta_{\sigma_3}\Delta_\rho^{\dagger}\sigma_2(x)dx=-\int_M\sigma_3(x)\Delta_\rho^{\dagger}\Delta_\rho\Big(\nabla\Delta_{\rho}^{\dagger}\sigma_1(x)\cdot \nabla \Delta_\rho^{\dagger}\sigma_2(x)\big)dx.
\end{equation*}
\begin{proof}[Proof of Claim 4]
\begin{equation*}
\begin{split}
\int_M\sigma_1(x)\Delta_\rho^{\dagger}\Delta_{\sigma_3}\Delta_\rho^{\dagger}\sigma_2(x)dx=&\int_M\Delta_\rho^{\dagger}\sigma_1(x)\nabla\cdot(\sigma_3(x) \nabla\Delta_{\rho}^{\dagger}\sigma_2(x))dx\\
=&-\int_M\sigma_3(x)\big(\nabla\Delta_\rho^{\dagger}\sigma_1(x)\cdot\nabla\Delta_{\rho}^{\dagger}\sigma_2(x)\big)dx\\
=&-\int_M\sigma_3(x)\Delta_\rho^{\dagger}\Delta_\rho\Big(\nabla\Delta_{\rho}^{\dagger}\sigma_1(x)\cdot \nabla \Delta_\rho^{\dagger}\sigma_2(x)\big)dx.
\end{split}
\end{equation*}
\end{proof}
Applying Claim 3 and 4, we have
\begin{equation*}
\begin{split}
&g_W(\nabla_{\sigma_1}^W\sigma_2, \sigma_3)\\
=&\frac{1}{2}\int_M\sigma_3(x)\Delta_{\rho}^{\dagger}\Delta_{\sigma_1}\Delta_\rho^{\dagger}\sigma_2(x)+\sigma_3(x)\Delta_{\rho}^{\dagger}\Delta_{\sigma_2}\Delta_\rho^{\dagger}\sigma_1(x)-\sigma_1(x)\Delta_{\rho}^{\dagger}\Delta_{\sigma_3}\Delta_\rho^{\dagger}\sigma_2(x)dx\\
=&\int_M\sigma_3(x)(-\Delta_{\rho})^{\dagger}(-\frac{1}{2})\big\{\Delta_{\sigma_1}\Delta_\rho^{\dagger}\sigma_2(x)+\sigma_3(x)\Delta_{\rho}^{\dagger}\Delta_{\sigma_2}\Delta_\rho^{\dagger}\sigma_1(x)-\sigma_1(x)\Delta_{\rho}^{\dagger}\Delta_{\sigma_3}\Delta_\rho^{\dagger}\sigma_2(x)\big\}dx.
\end{split}
\end{equation*}
From the definition of inner product $g_W$, we finish the proof.
 \end{proof}
 By the Levi-Civita connection, we define the Christoffel symbol in density manifold. 
 \begin{definition}
 Denote the Christoffel symbol operator at $\rho\in \mathcal{P}_+(M)$ by $\Gamma^{W,x}\colon T_\rho\mathcal{P}_+(M)\times  T_\rho\mathcal{P}_+(M)\rightarrow \mathbb{R}$,
 \begin{equation*}
\begin{split}
 \Gamma^{W,x}(\sigma_1,\sigma_2)=-\frac{1}{2}\big(\Delta_{\sigma_1}\Delta_\rho^{\dagger}\sigma_2+\Delta_{\sigma_2}\Delta_\rho^{\dagger}\sigma_1+\Delta_\rho(\nabla \Delta_\rho^{\dagger}\sigma_1\cdot\nabla \Delta_\rho^{\dagger}\sigma_2)\big)(x).
 \end{split}
 \end{equation*}
 \end{definition}
 It is clear $\nabla^W_{\sigma_1}\sigma_2(x)=\nabla^W_{\sigma_2}\sigma_1(x)=\Gamma^{W,x}(\sigma_1,\sigma_2)$. Following the Christoffel symbol, the parallel transport and geodesic equation in density manifold can be derived directly. 
\begin{proposition}[Parallel transport]
Denote $\rho\colon (a,b)\rightarrow \mathcal{P}_+(M)$. Consider $\sigma_t=\sigma(t,x)\in T_\rho\mathcal{P}_+(G)$ be a vector field along curve $\rho_t=\rho(t,x)$, then the equation for $\sigma_t$ to be parallel along $\rho_t$ is
\begin{equation*}
\partial_t\sigma_t=\frac{1}{2}\big(\Delta_{\sigma_t}\Delta_{\rho_t}^{\dagger}\partial_t\rho_t+\Delta_{\partial_t\rho_t}\Delta_{\rho_t}^{\dagger}\sigma_t+\Delta_{\rho_t}(\nabla\Delta_\rho^{\dagger}\partial_t\rho_t\cdot\nabla\Delta_\rho^{\dagger}\sigma_t)\big).
\end{equation*}
Let $\sigma_t=\partial_t\rho_t$, then the geodesic equation satisfies
\begin{equation}\label{geo}
\partial_{tt}\rho_t=\Delta_{\partial_t\rho_t}\Delta_{\rho_t}^{\dagger}\partial_t\rho_t+\frac{1}{2}\Delta_{\rho_t}(\nabla \Delta_{\rho_t}^{\dagger}\partial_t\rho_t)^2.
\end{equation}
\end{proposition}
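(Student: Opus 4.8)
The plan is to invoke the standard relation between parallel transport and the Levi-Civita connection established in Proposition~\ref{Levi_new}. Recall that a vector field $\sigma_t\in T_{\rho_t}\mathcal{P}_+(M)$ is parallel along the curve $\rho_t$ precisely when its covariant derivative along $\rho_t$ vanishes. Because every tangent space $T_\rho\mathcal{P}_+(M)=\{\sigma\in C^{\infty}(M)\colon\int_M\sigma\,dx=0\}$ is the same affine subspace of the fixed linear space of mean-zero functions, the natural $L^2$ coordinate frame is constant, so differentiating the frame contributes nothing and the covariant derivative along the curve reduces to
\begin{equation*}
\frac{D}{dt}\sigma_t=\partial_t\sigma_t+\nabla^W_{\partial_t\rho_t}\sigma_t\ ,
\end{equation*}
where $\partial_t\sigma_t$ is the ordinary $L^2$ time derivative. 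This is the exact infinite-dimensional analogue of the coordinate expression $\dot\sigma^k+\Gamma^k_{ij}\dot\rho^i\sigma^j$ used in the discrete parallel transport proposition.

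First I would substitute $\sigma_1=\partial_t\rho_t$ and $\sigma_2=\sigma_t$ into the connection formula of Proposition~\ref{Levi_new}, obtaining
\begin{equation*}
\nabla^W_{\partial_t\rho_t}\sigma_t=-\frac12\Big(\Delta_{\partial_t\rho_t}\Delta_\rho^{\dagger}\sigma_t+\Delta_{\sigma_t}\Delta_\rho^{\dagger}\partial_t\rho_t+\Delta_\rho(\nabla\Delta_\rho^{\dagger}\partial_t\rho_t\cdot\nabla\Delta_\rho^{\dagger}\sigma_t)\Big)\ .
\end{equation*}
Setting $\frac{D}{dt}\sigma_t=0$ and moving $\nabla^W_{\partial_t\rho_t}\sigma_t$ to the other side yields the stated parallel transport equation, where the symmetry of the connection, $\nabla^W_{\sigma_1}\sigma_2=\nabla^W_{\sigma_2}\sigma_1$, lets me order the two first-order terms as written.

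Then I would derive the geodesic equation by specializing the self-parallel condition $\nabla^W_{\partial_t\rho_t}\partial_t\rho_t=0$, i.e. by setting $\sigma_t=\partial_t\rho_t$ in the parallel transport equation. The two first-order terms then coincide and add to $\Delta_{\partial_t\rho_t}\Delta_\rho^{\dagger}\partial_t\rho_t$, while the quadratic term becomes $\frac12\Delta_\rho(\nabla\Delta_\rho^{\dagger}\partial_t\rho_t)^2$, giving exactly the stated geodesic equation.

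The main obstacle is not the algebra, which is a direct substitution from Proposition~\ref{Levi_new}, but justifying that the covariant-derivative-along-a-curve identity and the operators $\Delta_\rho^{\dagger}$, $\Delta_{\partial_t\rho_t}$ are well defined along the whole curve in the Fr{\'e}chet setting. This is controlled by the ellipticity of $\Delta_\rho=\nabla\cdot(\rho\nabla)$ and the Hodge decomposition used throughout Section~\ref{section5}; since the construction is formal in the sense of Villani's question, the essential content is the identification of $\frac{D}{dt}\sigma_t$ with $\partial_t\sigma_t+\nabla^W_{\partial_t\rho_t}\sigma_t$, after which both equations follow.
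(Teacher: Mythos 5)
Your proposal is correct and follows essentially the same route as the paper: the paper likewise reads off the parallel transport equation as $\partial_t\sigma_t=-\Gamma^{W,x}(\sigma_t,\partial_t\rho_t)$, where $\Gamma^{W,x}$ is exactly the connection formula of Proposition \ref{Levi_new}, and then obtains the geodesic equation by setting $\sigma_t=\partial_t\rho_t$ so that the two first-order terms merge. Your added justification that the flat $L^2$ coordinate frame is constant, so the covariant derivative along the curve is $\partial_t\sigma_t+\nabla^W_{\partial_t\rho_t}\sigma_t$, is implicit in the paper and is a reasonable point to make explicit.
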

\begin{proof}
The parallel transport equation is derived by 
\begin{equation*}
\partial_t\sigma(t,x)=-\Gamma^{W,x}(\sigma_t, \partial_t\rho_t)=\frac{1}{2}\big(\Delta_{\sigma_t}\Delta_{\rho_t}^{\dagger}\partial_t\rho_t+\Delta_{\partial_t\rho_t}\Delta_{\rho_t}^{\dagger}\sigma_t+\Delta_{\rho_t}(\nabla\Delta_\rho^{\dagger}\partial_t\rho_t\cdot\nabla\Delta_\rho^{\dagger}\sigma_t)\big).
\end{equation*}
And the geodesic equation is introduced by
\begin{equation*}
\partial_{tt}\rho_t=-\Gamma^{W,x}(\partial_t\rho_t,\partial_t\rho_t)=\Delta_{\partial_t\rho_t}\Delta_{\rho_t}^{\dagger}\partial_t\rho_t+\frac{1}{2}\Delta_{\rho_t}(\nabla \Delta_{\rho_t}^{\dagger}\partial_t\rho_t)^2.
\end{equation*}
\end{proof}
We introduce the curvature formulas in $(\mathcal{P}_+(M), W)$ following the derivation of \eqref{6main}. 
Denote $R_{W}(\cdot, \cdot)\cdot\colon C^{\infty}(T\mathcal{P}_+(M))\times  C^{\infty}(T\mathcal{P}_+(M))\times  C^{\infty}(T\mathcal{P}_+(M))\rightarrow  C^{\infty}(T\mathcal{P}_+(M))$.
\begin{proposition}[Curvature tensor]
Given $\sigma_1$, $\sigma_2$, $\sigma_3$, $\sigma_4\in T_\rho\mathcal{P}_+(M)$, then 
\begin{equation}\label{curvature}
\begin{split}
&g_W(R^W(\sigma_1,\sigma_2)\sigma_3, \sigma_4)\\
=&\frac{1}{4}\int_M \Big\{\sigma_2(x)\Delta_\rho^{\dagger} \Delta_{m(\sigma_1, \sigma_4)}\Delta_\rho^{\dagger}\sigma_3(x)+\sigma_1(x)\Delta_\rho^{\dagger} \Delta_{m(\sigma_2, \sigma_4)}\Delta_\rho^{\dagger}\sigma_3(x)\\
&\hspace{0.8cm}-\sigma_2(x)\Delta_\rho^{\dagger} \Delta_{m(\sigma_1, \sigma_3)}\Delta_\rho^{\dagger}\sigma_4(x)-\sigma_1(x)\Delta_\rho^{\dagger} \Delta_{m(\sigma_2, \sigma_3)}\Delta_\rho^{\dagger}\sigma_4(x)\\
&-2 n(\sigma_1, \sigma_2)\Delta_\rho^{\dagger}n(\sigma_3,\sigma_4)-n(\sigma_1,\sigma_3)\Delta_\rho^{\dagger}n(\sigma_2,\sigma_4)+n(\sigma_2,\sigma_3)\Delta_{\rho}^{\dagger}n(\sigma_1,\sigma_4)\Big\}dx,
\end{split}
\end{equation}
where operators $m$, $n\colon T_\rho\mathcal{P}_+(M)\times T_\rho\mathcal{P}_+(M)\rightarrow T_\rho\mathcal{P}_+(M)$ are defined by
\begin{equation*}
m(\sigma_a, \sigma_b):=-[\Delta_{\sigma_a}\Delta_{\rho}^{\dagger}\sigma_b+\Delta_{\sigma_b}\Delta_{\rho}^{\dagger}\sigma_a ]-\frac{1}{2}\Delta_\rho(\nabla\Delta_\rho^{\dagger}\sigma_a\cdot\nabla \Delta_\rho^{\dagger}\sigma_b),
\end{equation*}
and 
\begin{equation*}
n(\sigma_a, \sigma_b):=\Delta_{\sigma_a}\Delta_\rho^{\dagger}\sigma_b-\Delta_{\sigma_b}\Delta_{\rho}^{\dagger}\sigma_a.
\end{equation*}
\end{proposition}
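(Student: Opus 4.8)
The plan is to transcribe the finite-dimensional curvature computation, i.e. the derivation \eqref{6main}--\eqref{6-4}, into the operator language of the density manifold, with the positive semidefinite matrix $L(\rho)$ replaced by $-\Delta_\rho=-\nabla\cdot(\rho\nabla)$, its pseudo-inverse $L(\rho)^{\dagger}$ by $\Delta_\rho^{\dagger}$, and the Euclidean traces by $L^2(M)$ integrals. First I would invoke the vanishing of the commutator, $[\sigma_1,\sigma_2]_W=0$, from the Commutator proposition for $\mathcal{P}_+(M)$, so that $R_W(\sigma_1,\sigma_2)\sigma_3=\nabla^W_{\sigma_1}\nabla^W_{\sigma_2}\sigma_3-\nabla^W_{\sigma_2}\nabla^W_{\sigma_1}\sigma_3$. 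Pairing with $\sigma_4$ and using the metric compatibility of the Levi-Civita connection $\nabla^W$ of Proposition \ref{Levi_new}, I obtain the four-term decomposition
\begin{equation*}
g_W(R_W(\sigma_1,\sigma_2)\sigma_3,\sigma_4)=\sigma_1\big(g_W(\nabla^W_{\sigma_2}\sigma_3,\sigma_4)\big)-g_W(\nabla^W_{\sigma_2}\sigma_3,\nabla^W_{\sigma_1}\sigma_4)-\sigma_2\big(g_W(\nabla^W_{\sigma_1}\sigma_3,\sigma_4)\big)+g_W(\nabla^W_{\sigma_1}\sigma_3,\nabla^W_{\sigma_2}\sigma_4)\ ,
\end{equation*}
exactly as in \eqref{6main}, where $\sigma_1,\dots,\sigma_4$ are extended as constant (Euclidean) vector fields.

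Next I would compute the two directional-derivative terms. Writing $g_W(\nabla^W_{\sigma_2}\sigma_3,\sigma_4)$ through the Levi-Civita formula of Proposition \ref{Levi_new} as a sum of integrals of the form $\int_M \sigma_a\,\Delta_\rho^{\dagger}\Delta_{\sigma_b}\Delta_\rho^{\dagger}\sigma_c\,dx$, I differentiate each $\Delta_\rho^{\dagger}$ factor along $t\mapsto\rho+t\sigma_1$. The variation identity of Claim 3 in the proof of Proposition \ref{Levi_new}, namely $\frac{d}{dt}\big|_{t=0}\int_M\sigma\,(-\Delta_{\rho+t\sigma_1})^{\dagger}\tilde\sigma\,dx=\int_M\sigma\,\Delta_\rho^{\dagger}\Delta_{\sigma_1}\Delta_\rho^{\dagger}\tilde\sigma\,dx$ on $T_\rho\mathcal{P}_+(M)$, then produces the triple-$\Delta$ integrals $\int_M\sigma_a\Delta_\rho^{\dagger}\Delta_{\sigma_b}\Delta_\rho^{\dagger}\Delta_{\sigma_c}\Delta_\rho^{\dagger}\sigma_d\,dx$, the precise analogues of the matrix products in \eqref{6-1}--\eqref{6-2}. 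The term $\sigma_2\big(g_W(\nabla^W_{\sigma_1}\sigma_3,\sigma_4)\big)$ is handled identically with $\sigma_1,\sigma_2$ interchanged.

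For the two quadratic terms $g_W(\nabla^W_{\sigma_2}\sigma_3,\nabla^W_{\sigma_1}\sigma_4)$ and $g_W(\nabla^W_{\sigma_1}\sigma_3,\nabla^W_{\sigma_2}\sigma_4)$, I would substitute the Levi-Civita formula into both slots, using the linearity of $\rho\mapsto\Delta_\rho$ and the self-adjointness of $\Delta_\rho^{\dagger}$. The carr\'e-du-champ contributions $\Delta_\rho(\nabla\Delta_\rho^{\dagger}\sigma_a\cdot\nabla\Delta_\rho^{\dagger}\sigma_b)$ are rewritten by integration by parts on the closed manifold $M$, in the spirit of Claim 4 in Proposition \ref{Levi_new}: using $\int_M f\,\Delta_{\sigma}g\,dx=-\int_M\sigma\,\nabla f\cdot\nabla g\,dx$ together with $\Delta_\rho^{\dagger}\Delta_\rho\sigma=\sigma$ for $\sigma\in T_\rho\mathcal{P}_+(M)$, every summand is again brought to a common triple-$\Delta$ form, mirroring \eqref{6-3}--\eqref{6-4}.

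Finally, adding the four groups, the triple-$\Delta$ integrals cancel and regroup: the surviving symmetric combinations assemble into the $m(\cdot,\cdot)$ terms, where the extra $\Delta_{m(\sigma_a,\sigma_b)}$ factor arises from recognizing $-\frac12\big(\Delta_{\sigma_a}\Delta_\rho^{\dagger}\sigma_b+\cdots\big)$ as $\nabla^W_{\sigma_a}\sigma_b$, while the antisymmetric leftovers assemble into the $n(\cdot,\cdot)$ pairings. I expect the main obstacle to be exactly this bookkeeping: there are on the order of a dozen triple-$\Delta$ integrals, and one must track their signs carefully --- bearing in mind that $-\Delta_\rho$ plays the role of $L(\rho)$, which flips several signs relative to the discrete formula --- and apply the correct integration-by-parts and self-adjointness identity to each summand before the symmetric and antisymmetric parts separate. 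No analytic input beyond the identities already used for the connection is required; the difficulty lies entirely in organizing the cancellations so that they collapse to the stated $m$- and $n$-expressions.
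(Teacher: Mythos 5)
Your proposal coincides with the paper's own treatment: the paper offers no separate proof of this proposition, saying only that the formula is obtained ``by the similar derivation in \eqref{6main}'', i.e.\ by transcribing the discrete curvature computation with $L(\rho)$ replaced by $-\Delta_\rho$, $L(\rho)^{\dagger}$ by $\Delta_\rho^{\dagger}$, and matrix pairings by $L^2(M)$ integrals --- which is exactly your plan. The ingredients you invoke (the vanishing commutator, the metric-compatibility decomposition \eqref{6main}, the Claim~3 variation identity for $\Delta_{\rho+t\sigma}^{\dagger}$, and the Claim~4 integration-by-parts identity) are precisely those underlying the discrete proof the paper points to, so your approach is essentially the same as the paper's.
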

We compute the Hessian operator in density manifold by the method in deriving \eqref{dH}. Denote $\textrm{Hess}_W(\cdot, \cdot)\colon C^{\infty}(T\mathcal{P}_+(M))\times C^{\infty}(T\mathcal{P}_+(M))\rightarrow C^{\infty}(\mathcal{P}_+(M))$.
\begin{proposition}[Hessian operator]
Given $\sigma_1,\sigma_2\in T_\rho\mathcal{P}_+(M)$, then
\begin{equation}\label{c-hess}
\begin{split}
\textrm{Hess}_{W}\mathcal{F}(\rho)(\sigma_1, \sigma_2)
=&\int_{M}\int_M \frac{\delta^2}{\delta\rho(x)\delta\rho(y)}\mathcal{F}(\rho)\sigma_1(x)\sigma_2(y)dxdy\\
+&\frac{1}{2}\int_M \frac{\delta}{\delta\rho(x)}\mathcal{F}(\rho) \big\{\Delta_{\sigma_1}\Delta_\rho^{\dd} \sigma_2(x)+\Delta_{\sigma_2}\Delta_{\rho}^{\dd}\sigma_1+\Delta_\rho(\nabla\Delta_\rho^{\dd}\sigma_1\cdot\nabla \Delta_\rho^{\dd}\sigma_2)\big\}dx,
\end{split}
\end{equation}

\end{proposition}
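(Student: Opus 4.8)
The plan is to reproduce, in the infinite-dimensional setting, the argument behind the discrete Hessian formula, whose starting point is the identity \eqref{dH}. Since the Levi-Civita connection $\nabla^W$ is metric-compatible, for $\sigma_1,\sigma_2\in T_\rho\mathcal{P}_+(M)$ regarded as constant (Euclidean) vector fields one has
\[
\textrm{Hess}_W\mathcal{F}(\rho)(\sigma_1,\sigma_2)=g_W(\sigma_1,\nabla^W_{\sigma_2}\nabla_W\mathcal{F}(\rho))=\sigma_2\big(g_W(\sigma_1,\nabla_W\mathcal{F}(\rho))\big)-g_W(\nabla^W_{\sigma_2}\sigma_1,\nabla_W\mathcal{F}(\rho)).
\]
First I would treat the two terms on the right separately, the key simplification being that wherever the metric pairs a tangent vector against the gradient $\nabla_W\mathcal{F}$ I may replace it by the differential, namely $g_W(X,\nabla_W\mathcal{F}(\rho))=\int_M\frac{\delta}{\delta\rho(x)}\mathcal{F}(\rho)\,X(x)\,dx$ for every $X\in T_\rho\mathcal{P}_+(M)$. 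This is exactly the defining property of the Riemannian gradient, and it lets me sidestep differentiating the $\rho$-dependent operator $\Delta_\rho^{\dagger}$ hidden inside $\nabla_W\mathcal{F}$.

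For the first term, applying this identity with $X=\sigma_1$ gives $g_W(\sigma_1,\nabla_W\mathcal{F}(\rho))=\int_M\frac{\delta}{\delta\rho(x)}\mathcal{F}(\rho)\sigma_1(x)\,dx$, which no longer involves the metric. Differentiating along the curve $t\mapsto\rho+t\sigma_2$ at $t=0$, and using that $\sigma_1$ is held constant, the only $\rho$-dependence is through the first variation $\frac{\delta}{\delta\rho}\mathcal{F}$, whose derivative is the second variation. This produces the double-integral term
\[
\sigma_2\big(g_W(\sigma_1,\nabla_W\mathcal{F}(\rho))\big)=\int_M\int_M\frac{\delta^2}{\delta\rho(x)\delta\rho(y)}\mathcal{F}(\rho)\sigma_1(x)\sigma_2(y)\,dx\,dy.
\]

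For the second term I would again use the gradient identity, now with $X=\nabla^W_{\sigma_2}\sigma_1$, so that $g_W(\nabla^W_{\sigma_2}\sigma_1,\nabla_W\mathcal{F}(\rho))=\int_M\frac{\delta}{\delta\rho(x)}\mathcal{F}(\rho)\,(\nabla^W_{\sigma_2}\sigma_1)(x)\,dx$. Substituting the explicit connection from Proposition \ref{Levi_new}, together with its symmetry $\nabla^W_{\sigma_2}\sigma_1=\nabla^W_{\sigma_1}\sigma_2$ and the overall factor $-\tfrac12$ in front of the Christoffel operator, the contribution $-g_W(\nabla^W_{\sigma_2}\sigma_1,\nabla_W\mathcal{F})$ becomes $+\tfrac12\int_M\frac{\delta}{\delta\rho(x)}\mathcal{F}(\rho)\big(\Delta_{\sigma_1}\Delta_\rho^{\dagger}\sigma_2+\Delta_{\sigma_2}\Delta_\rho^{\dagger}\sigma_1+\Delta_\rho(\nabla\Delta_\rho^{\dagger}\sigma_1\cdot\nabla\Delta_\rho^{\dagger}\sigma_2)\big)(x)\,dx$. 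Adding the two contributions yields \eqref{c-hess}.

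The substitutions themselves are immediate, so the hard part will not be the algebra but the rigorous justification of the variational step in the Fr{\'e}chet setting: namely that differentiating $t\mapsto\int_M\frac{\delta}{\delta\rho(x)}\mathcal{F}(\rho+t\sigma_2)\sigma_1(x)\,dx$ yields the second $L^2$-variation, and that each pairing with $\nabla_W\mathcal{F}$ may be replaced by an integral against $\frac{\delta}{\delta\rho}\mathcal{F}$. This is the continuous analogue of the finite-dimensional computation in the proof of the discrete Hessian proposition, and I would make each interchange precise by invoking the smoothness of $\mathcal{F}$ on $\mathcal{P}_+(M)$ and the self-adjointness of $\Delta_\rho^{\dagger}$ on $T_\rho\mathcal{P}_+(M)$.
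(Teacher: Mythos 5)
Your proposal is correct and follows exactly the route the paper intends: the paper proves the continuous Hessian formula ``by \eqref{dH}'', i.e.\ metric compatibility giving $\textrm{Hess}_W\mathcal{F}(\sigma_1,\sigma_2)=\sigma_2\big(g_W(\sigma_1,\nabla_W\mathcal{F})\big)-g_W(\nabla^W_{\sigma_2}\sigma_1,\nabla_W\mathcal{F})$, the pairing-with-gradient identity reducing the first term to the second $L^2$ variation, and substitution of the Levi-Civita connection of Proposition \ref{Levi_new} into the second term, which is precisely your computation. Your sign bookkeeping (the overall $-\tfrac12$ in the connection flipping to $+\tfrac12$ in the Hessian) and your implicit use of $\nabla^W_{\sigma_2}\sigma_1\in T_\rho\mathcal{P}_+(M)$ are both consistent with the stated formula \eqref{c-hess}.
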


We next provide the formulation of the Laplace-Beltrami operator in density manifold, i.e. $\Delta_W\colon C^{\infty}(\mathcal{P}_+(M))\rightarrow C^{\infty}(\mathcal{P}_+(M))$, by the technique used in proposition \ref{p8}.

The following definitions are needed. Denote $\lambda_i(\rho)>0$, $i=1,2,\cdots$, be positive eigenvalues of $-\Delta_{\rho}$. I.e. there exists $u_i(x)\in C^{\infty}(M)$, such that
\begin{equation*}
-\nabla\cdot (\rho(x) \nabla u_i(x))=\lambda_i(\rho)u_i(x).
\end{equation*}
Let $\lambda_{\alpha,\mathcal{F}}(\rho)$, $\alpha\in I$ with the total index set $I$, be eigenvalues of operator $-\Delta_\rho\delta^2\mathcal{F}(\rho)$. I.e. there exists $v_\alpha(x)\in C^{\infty}(M)$, such that
\begin{equation*}
-\nabla_x\cdot\big(\rho(x)\nabla_x (\int_{M}  \frac{\delta^2}{\delta\rho(x)\delta\rho(y)}\mathcal{F}(\rho)v_\alpha(y)dy)\big)=\lambda_{\alpha,\mathcal{F}}v_{\alpha}(x).
\end{equation*}
\begin{proposition}[Laplace-Beltrami operator]
%The Laplace-Beltrami operator in $(\mathcal{P}_+(M), W)$ forms
Given $\mathcal{F}(\rho)\in C^{\infty}(\mathcal{P}_+(M))$, then
\begin{equation*}
\begin{split}
\Delta_W\mathcal{F}(\rho):=&tr_{L^2}((-\Delta_\rho)\delta^2\mathcal{F}(\rho))-\frac{1}{2}\int_{M}\nabla\log(\textrm{det}(-\Delta_\rho))(x)\nabla\frac{\delta}{\delta\rho(x)}\mathcal{F}(\rho)\rho(x)dx,
\end{split}
\end{equation*}
where $tr_{L^2}((-\Delta_\rho)\delta^2\mathcal{F}(\rho))=\sum_{\alpha\in I}\lambda_{\alpha,\mathcal{F}}(\rho)$,
and 
$\textrm{det}(-\Delta_\rho)=\Pi_{i=1}^{\infty}\lambda_i(\rho)$. 
\end{proposition}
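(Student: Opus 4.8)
The plan is to transcribe, essentially line by line, the finite-dimensional proof of Proposition \ref{p8} into the $L^2$ setting, under the dictionary $L(\rho)\leftrightarrow -\Delta_\rho$, $d_\rho\leftrightarrow\frac{\delta}{\delta\rho(x)}$, $d^2_\rho\leftrightarrow\frac{\delta^2}{\delta\rho(x)\delta\rho(y)}$, $\textrm{tr}\leftrightarrow tr_{L^2}$, and $\Pi(\rho)=\Pi_{i=1}^{n-1}\lambda_i(\rho)\leftrightarrow\textrm{det}(-\Delta_\rho)=\Pi_{i=1}^{\infty}\lambda_i(\rho)$. The starting point is the Riemannian identity $\Delta_W\mathcal{F}(\rho)=\textrm{div}_W(\nabla_W\mathcal{F}(\rho))$, together with the gradient formula already established, which reads $\nabla_W\mathcal{F}(\rho)=-\Delta_\rho\frac{\delta}{\delta\rho(x)}\mathcal{F}(\rho)$ since $-\Delta_\rho\Phi=-\nabla\cdot(\rho\nabla\Phi)$.

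First I would record the divergence operator on $(\mathcal{P}_+(M),g_W)$ relative to its Riemannian volume. The volume form is the formal expression $d\textrm{vol}_W=\textrm{det}(-\Delta_\rho)^{-\frac{1}{2}}d\textrm{vol}$, the direct analogue of the finite-dimensional volume-form theorem, with $\Pi(\rho)$ replaced by $\textrm{det}(-\Delta_\rho)$ and $u_0\equiv 1/\sqrt{\textrm{vol}(M)}$ the constant unit normal to $\mathcal{P}_+(M)$ in $\mathcal{M}_+(M)$. Integrating by parts against a compactly supported test functional then yields, for any tangent vector field $\mathcal{G}(\rho)$,
\begin{equation*}
\textrm{div}_W\mathcal{G}(\rho)=\textrm{div}_\rho\mathcal{G}(\rho)-\frac{1}{2}\int_M\mathcal{G}(\rho)(x)\,\frac{\delta}{\delta\rho(x)}\log\textrm{det}(-\Delta_\rho)\,dx\ ,
\end{equation*}
where $\textrm{div}_\rho$ is the flat ($L^2$) divergence $\textrm{div}_\rho\mathcal{G}(\rho)=\int_M\frac{\delta\mathcal{G}(\rho)(x)}{\delta\rho(x)}dx$, exactly as $\textrm{div}_W\mathcal{G}=\nabla_\rho\cdot\mathcal{G}-\frac{1}{2}\mathcal{G}^{\ts}d_\rho\log\Pi$ in Proposition \ref{p8}. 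Substituting $\mathcal{G}=\nabla_W\mathcal{F}=-\Delta_\rho\frac{\delta}{\delta\rho(x)}\mathcal{F}$ and integrating by parts in $x$ converts the second term into $-\frac{1}{2}\int_M\nabla\log\textrm{det}(-\Delta_\rho)\cdot\nabla\frac{\delta}{\delta\rho(x)}\mathcal{F}(\rho)\,\rho(x)\,dx$, which is precisely the gradient-correction term in the claim.

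The substantive step is the flat-divergence computation $\textrm{div}_\rho\big(-\Delta_\rho\frac{\delta}{\delta\rho(x)}\mathcal{F}\big)=tr_{L^2}((-\Delta_\rho)\delta^2\mathcal{F}(\rho))$, the $L^2$ shadow of the identity $\nabla_\rho\cdot(L(\rho)d_\rho\mathcal{F})=\textrm{tr}(L(\rho)d^2_\rho\mathcal{F})$ proved inside Proposition \ref{p8}. Writing $-\Delta_\rho\frac{\delta}{\delta\rho(x)}\mathcal{F}=-\nabla_x\cdot(\rho(x)\nabla_x\frac{\delta}{\delta\rho(x)}\mathcal{F})$ and applying $\int_M\frac{\delta}{\delta\rho(x)}(\cdot)dx$, the product rule splits the result into a term in which the variation lands on $\frac{\delta}{\delta\rho(x)}\mathcal{F}$ — producing the kernel $\frac{\delta^2}{\delta\rho(x)\delta\rho(y)}\mathcal{F}(\rho)$, and after applying $-\Delta_\rho$ and taking the diagonal $L^2$-trace yielding exactly $tr_{L^2}((-\Delta_\rho)\delta^2\mathcal{F}(\rho))=\sum_{\alpha\in I}\lambda_{\alpha,\mathcal{F}}(\rho)$ — and a term in which the variation lands on the explicit $\rho$ inside $-\Delta_\rho$. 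The latter single-derivative remainder must vanish; this is the infinite-dimensional counterpart of the cancellation in Proposition \ref{p8}, where the sum $\sum_i\sum_{j\in N(i)}(\partial_i-\partial_j)\mathcal{F}\,\omega_{ij}$ dies by the antisymmetry of $(\partial_i-\partial_j)$ against the symmetry of $\omega_{ij}$, the continuous analogue being that a total spatial divergence integrates to zero on the closed manifold $M$.

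The hard part will be conceptual rather than computational: the objects $\textrm{det}(-\Delta_\rho)$, $tr_{L^2}$, and the flat divergence on the infinite-dimensional manifold are only formal — the determinant needs zeta-function regularization, and both the trace and the diagonal functional derivative $\frac{\delta\mathcal{G}(x)}{\delta\rho(x)}$ carry the usual $\delta(0)$-type ambiguity. I would therefore present the identity as the natural limit of the finite-state formula of Proposition \ref{p8}, answering Villani's request for such a formula ``at least formally'' in the spirit of the whole of Section \ref{section5}, and concentrate the verification on the two algebraic reorganizations above, explicitly flagging that the rigorous treatment of the regularized determinant and of its first variation $\frac{\delta}{\delta\rho(x)}\log\textrm{det}(-\Delta_\rho)$ is where the genuine analytic content would have to be supplied.
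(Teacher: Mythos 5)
Your proposal is correct at the formal level on which the paper operates and takes exactly the route the paper intends: the paper states this proposition with no proof at all, saying only that it follows ``by the technique used in proposition \ref{p8}'', and your transcription of that finite-dimensional argument --- the volume form $\textrm{det}(-\Delta_\rho)^{-\frac{1}{2}}d\textrm{vol}$, the divergence formula obtained by integration by parts against compactly supported test functionals, the flat-divergence/trace identity $\int_M\frac{\delta}{\delta\rho(x)}\big(-\Delta_\rho\frac{\delta}{\delta\rho(x)}\mathcal{F}\big)dx=tr_{L^2}((-\Delta_\rho)\delta^2\mathcal{F}(\rho))$ with the single-derivative remainder cancelling (the continuum analogue of the relabeling cancellation in Proposition \ref{p8}), and the spatial integration by parts that turns the correction term into $-\frac{1}{2}\int_M\nabla\log\textrm{det}(-\Delta_\rho)\cdot\nabla\frac{\delta}{\delta\rho(x)}\mathcal{F}(\rho)\,\rho(x)dx$ --- is precisely that technique carried out in full. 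Your closing caveat, that $\textrm{det}(-\Delta_\rho)$, $tr_{L^2}$, and the diagonal functional derivative are formal objects requiring regularization, matches the paper's own level of rigor and is an honest flag of where the genuine analytic content would lie.
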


Similar as the proof Lemma \ref{lemma}, it is straightforward to introduce the Jacobi equation on density manifold. 
\begin{proposition}[Variation of energy and Jacobi equation] 
Consider an infinitesmall deformation $\rho_\epsilon(t,x)=\rho(t,x)+\epsilon h(t,x)\in\mathcal{P}_+(M)$ with 
$h(t,x)\in C^{\infty}(M)$, $\int_M h(t,x)dx=0$, and $h(0,x)=h(1,x)=0$, 
\begin{equation*}
\mathcal{E}(\rho_\epsilon)=\int_{0}^1\frac{1}{2}\partial_t\rho_t^{\epsilon}(-\Delta_{\rho_t^\epsilon})^{\dd}\partial_t\rho^\epsilon_t dt=\mathcal{E}(\rho)+\epsilon\delta \mathcal{E}(\rho)(h)+\frac{\epsilon^2}{2}\delta^2\mathcal{E}(\rho)(h)+o(\epsilon^2).
\end{equation*}
Then the first and second variations satisfy 
\begin{equation*}
\delta \mathcal{E}(\rho)(h)=\int_0^1\int_M \partial_t\rho_t (-\Delta_{\rho_t})^{\dd}\big(\partial_th_t-\frac{1}{2}\Delta_{h_t}\Delta_{\rho_t}^{\dd}\partial_t\rho_t\big)dxdt ,
\end{equation*}
and 
\begin{equation*}
\delta^2 \mathcal{E}(\rho)(h)=\int_0^1\int_M\big(\partial_th_t-\Delta_{h_t}\Delta_{\rho_t}^{\dd}\partial_t\rho_t\big) (-\Delta_{\rho_t})^{\dd}\big(\partial_t h_t-\Delta_{h_t}\Delta_{\rho_t}^{\dd}\partial_t \rho_t\big)dxdt. 
\end{equation*}
Thus the Jacobi equation along the geodesic $\rho_t$ satisfies 
\begin{equation*}
\partial_t h_t-\Delta_{h_t}\Delta_{\rho_t}^{\dagger}\partial_t\rho_t=0,\quad h(0,x)=h(1,x)=0.
\end{equation*}
\end{proposition}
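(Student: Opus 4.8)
The plan is to transplant the proof of Lemma~\ref{lemma}, together with the discrete Jacobi argument around \eqref{EL}--\eqref{Jacobi2}, to the $L^2$ setting, replacing the matrix $g(\mu)$ by the operator $g(\mu)$ of \eqref{gmu}.

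First I would reduce the energy integrand to the ambient metric on $\mathcal{M}_+(M)$. Since $\partial_t\rho_t^\epsilon\in T_{\rho_t^\epsilon}\mathcal{P}_+(M)$ has vanishing total mass, the correction terms in \eqref{gmu} drop out and $(-\Delta_{\rho^\epsilon})^{\dagger}\partial_t\rho^\epsilon=g(\rho^\epsilon)\partial_t\rho^\epsilon$, so the integrand equals the pullback form $\int_M \partial_t\rho^\epsilon_t\,(g(\rho^\epsilon_t)\partial_t\rho^\epsilon_t)\,dx$. Writing $\rho^\epsilon=\rho+\epsilon h$ and $\partial_t\rho^\epsilon=\partial_t\rho+\epsilon\,\partial_t h$, I then expand $\int_0^1\tfrac12\int_M(\partial_t\rho+\epsilon\partial_t h)\,g(\rho+\epsilon h)\,(\partial_t\rho+\epsilon\partial_t h)\,dx\,dt$ in powers of $\epsilon$, exactly as in \eqref{variation}.

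The analytic input is the $\epsilon$-dependence of $g(\rho+\epsilon h)$, i.e. the functional analog of Claim~3. Differentiating $g(\rho+\epsilon h)\,g(\rho+\epsilon h)^{-1}=\mathrm{Id}$ and using that $g^{-1}=-\Delta_{\rho}+\int_M(\cdot)\,dy$ is affine in $\rho$ with $\tfrac{d}{d\epsilon}g^{-1}=-\Delta_h$, one gets $\tfrac{d}{d\epsilon}g(\rho+\epsilon h)=g(\rho+\epsilon h)\,\Delta_h\,g(\rho+\epsilon h)$ (this is the identity \eqref{a}) and, differentiating once more, $\tfrac{d^2}{d\epsilon^2}g(\rho+\epsilon h)=2\,g\,\Delta_h\,g\,\Delta_h\,g$ --- the $L^2$ counterpart of \eqref{claim3}. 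Collecting the coefficients of $\epsilon^1$ and $\epsilon^2$ and repeatedly using $g(\rho)\partial_t\rho=(-\Delta_\rho)^{\dagger}\partial_t\rho$ (again by tangency, as in the proof of Proposition~\ref{Levi_new}) produces the stated first variation and, after completing the square, the second variation in the form of \eqref{sv}, with the perfect square $\big(\partial_t h-\Delta_{h}\Delta_\rho^{\dagger}\partial_t\rho\big)(-\Delta_\rho)^{\dagger}\big(\partial_t h-\Delta_{h}\Delta_\rho^{\dagger}\partial_t\rho\big)$.

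For the Jacobi equation I would argue verbatim as for \eqref{Jacobi2}. The Jacobi fields are the critical points of the index form $\delta^2\mathcal{E}(\rho)(h)=\int_0^1\int_M A\,dx\,dt$; writing its Euler--Lagrange equation (the $L^2$ analog of \eqref{EL}) one checks directly that every $h$ with $\partial_t h-\Delta_{h}\Delta_\rho^{\dagger}\partial_t\rho=0$ solves it. Conversely, $A$ is a quadratic form of degree two in $(h,\partial_t h)$, so pairing the Euler--Lagrange equation with $h$ and integrating by parts in $t$ --- the boundary terms vanishing because $h(0,x)=h(1,x)=0$ --- gives $\delta^2\mathcal{E}(\rho)(h)=0$; since $(-\Delta_\rho)^{\dagger}$ is positive definite on $T_\rho\mathcal{P}_+(M)$ the integrand vanishes identically, forcing the first-order equation. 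I expect the main obstacle to be analytic rather than algebraic: making rigorous, for the unbounded elliptic operator $-\Delta_\rho$, the smooth $\rho$-dependence of the pseudoinverse $(-\Delta_\rho)^{\dagger}$, the operator identities for its $\epsilon$-derivatives (domains and spectral issues), and the Hodge-type identity $\Delta_\rho^{\dagger}\Delta_\rho\sigma=\sigma$ on $T_\rho\mathcal{P}_+(M)$ underlying the integrations by parts, all of which are automatic in the finite-dimensional graph case.
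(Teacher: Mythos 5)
Your proposal is correct and follows essentially the same route as the paper, which offers no separate proof for this proposition but explicitly declares it ``similar as the proof of Lemma \ref{lemma}'': you transplant the expansion of $g(\rho+\epsilon h)$ (the functional Claim 3, with the sign-correct identity $\tfrac{d}{d\epsilon}g = g\Delta_h g$), use tangency to replace $g(\rho)$ by $(-\Delta_\rho)^{\dagger}$, complete the square for $\delta^2\mathcal{E}$, and rerun the discrete Euler--Lagrange/positivity argument of \eqref{EL}--\eqref{Jacobi2} to characterize the Jacobi fields. Your closing caveat about the genuinely new analytic issues (smoothness of $\rho\mapsto(-\Delta_\rho)^{\dagger}$, domains, and the Hodge identity $\Delta_\rho^{\dagger}\Delta_\rho\sigma=\sigma$ on $T_\rho\mathcal{P}_+(M)$) is exactly where the formal argument of the paper would need rigor, and is appropriately flagged rather than hidden.
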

\begin{remark}
We notice that several other formulations of geometric operators, such as gradient operator, curvature tensor, etc in continuous space have been formulated in \cite{Gigli, OV, vil2008}. Here we emphasis their formulations in the tangent space, following the study of information geometry.  In addition, we emphasize that the general formulation of Hessian operator of energy functional in $L^2$--Wasserstein metric is new. 
\end{remark}
\begin{remark}
We notice that in continuous sample space, many explicit formulas here require smoothness assumptions on the measures.
 It is a delicate issue whether extensions to less regular settings hold \cite{Gigli}. We leave the study of the analysis for these geometric operators in the future work.
 \end{remark}
\subsection{Connections with Otto calculus}
In literature, the other coordinates, named Otto calculus \cite{vil2008}, in density manifold has been considered; see details in \cite{Lott}. These considerations are also studied in the Chapter 3 of \cite{Lafferty}. In this sequel, we illustrate the connection between Otto calculus and the ones in this paper. In other words, we simply formulate the calculus in density space by either tangent bundle or cotangent bundle. 

Denote the smooth cotangent space at $\rho\in \mathcal{P}_+(M)$ by $(T_\rho\mathcal{P}_+(M))^*$, i.e. 
\begin{equation*}
(T_\rho\mathcal{P}_+(M))^*=\{F_{\Phi},~\Phi\in C^{\infty}(M)\colon F_\Phi(\sigma)=\int_M\sigma(x)\Phi(x)dx,~\textrm{for any $\sigma\in T_\rho\mathcal{P}_+(M)$}\}.
\end{equation*}
For any constant $c\in \mathbb{R}$ and any $\sigma\in T_\rho\mathcal{P}_+(M)$,
\begin{equation*}
F_{(\Phi+c)}(\sigma)=\int_M\Phi(x)\sigma(x)dx+c\int_M\sigma(x)dx=\int_M\Phi(x)\sigma(x)dx=F_{\Phi}(\sigma),
\end{equation*}
I.e. $C^{\infty}(M)/\mathbb{R}\cong (T_\rho\mathcal{P}_+(M))^*$. 

We can identify the cotangent space and tangent space of density manifold by 
the map $\Phi\rightarrow V_{\Phi}=-\Delta_\rho \Phi(x)$. I.e. for any tangent vector $\sigma(x)\in T_\rho\mathcal{P}_+(M)$, there exists a unique $\Phi(x)\in C^{\infty}(M)/\mathbb{R}$, such that
 \begin{equation}\label{cot}
 \sigma(x)=V_{\Phi}(x)=-\nabla\cdot(\rho(x)\nabla \Phi(x)).
 \end{equation}
Following the property of elliptical operator $\Delta_\rho$, we have $C^{\infty}(M)/\mathbb{R}\cong T_\rho\mathcal{P}_+(M)$. Thus $(T_\rho\mathcal{P}_+(M))^*=T_\rho\mathcal{P}_+(M)$.

Thus the Riemannian inner product in density manifold can be represented the cotangent vectors. I.e. we apply potential function $\Phi(x)$ to represent the tangent vector $\sigma(x)\in T_\rho\mathcal{P}_+(M)$ using \eqref{cot}. Thus the inner product in density manifold can be formulated as 
\begin{equation*}
g_W(\sigma_1,\sigma_2)= \int_M \sigma_1(x)(-\Delta_\rho)^{\dd}\sigma_2(x)dx= \int_M \Phi_1(x)(-\Delta_\rho)\Phi_2(x)dx.
\end{equation*}
Following the Fr{\'e}chet manifold, all geometric formulas in density manifold, such as gradient, Hessian, geodesic, etc, derived in this paper does not depend on the coordinates system, so they are equivalent to the ones derived by Otto calculus. 
We next illustrate this equivalence by some formulas. 
\begin{proposition}[Geodesic by cotangent vectors]
Denote \begin{equation}\label{change}
\Phi_t=\Phi(t,x)=-(\nabla\cdot\rho(t,x)\nabla)^{\dd}\frac{\partial\rho}{\partial t}(t,x).
\end{equation} 
Then the geodesic equation \eqref{geo} is equivalent to the following two equations.  
One is the compressible Euler equation 
\begin{equation*}
\begin{cases}
&\partial_t\rho_t+\nabla\cdot(\rho_t\nabla\Phi_t)=0\\
&\partial_t\Phi_t+\frac{1}{2}\|\nabla\Phi_t\|^2=0.
\end{cases}
\end{equation*}
The other is (4.12) of \cite{Lafferty}, i.e. denote $v_t=v(t,x)=\nabla\Phi_t$, then
\begin{equation*}
\partial^2_t\rho_t=\nabla\cdot \big(\rho_t (\frac{\nabla\cdot(\rho_tv_t)}{\rho_t}v_t+\nabla_{v_t}v_t)\big).
\end{equation*}
\end{proposition}
\begin{remark}
Here for the geodesics formulation of $(\rho, \Phi)$, it is essentially well-known in literature, see \cite{Gigli, OV, vil2008}. We emphasize the fact that it is the Hamiltonian formulation of standard geodesics $\partial_{tt}\rho+\Gamma^W(\partial_t\rho, \partial_t\rho)=0$, where $\Phi$ is the momentum variable. 
\end{remark}
\begin{proof}
On the one hand, substituting \eqref{change} into \eqref{geo}, we have \begin{equation*}
\begin{split}
0=&\partial_t(\partial_t\rho_t)-\Delta_{\partial_t\rho_t}\Delta_{\rho_t}^{\dagger}\partial_t\rho_t-\frac{1}{2}\Delta_{\rho_t}(\nabla \Delta_{\rho_t}^{\dagger}\partial_t\rho_t)^2\\
=&-\partial_t(\Delta_{\rho_t}\Phi_t)+\Delta_{\partial_t\rho_t}\Phi_t-\frac{1}{2}\Delta_{\rho_t}(\nabla\Phi_t)^2\\
=&-\Delta_{\rho_t}(\partial_t\Phi_t+\frac{1}{2}(\nabla\Phi_t)^2).
\end{split}
\end{equation*}
Recall $\partial_t\rho_t=-\Delta_{\rho_t}\Phi_t$. We derive the compressible Euler equation, where $\Phi_t$ is unique up to a shrift of constant function w.r.t. $t$.

On the other hand, denote $v_t=v(t,x)=\nabla\Phi(t,x)$ in \eqref{change}, then $\nabla_{v_t}v_t=\frac{1}{2}\nabla(\nabla\Phi_t)^2$. Thus the geodesic equation \eqref{geo} forms
\begin{equation*}
\begin{split}
\partial_t(\partial_t\rho_t)=&\Delta_{\partial_t\rho_t}\Delta_{\rho_t}^{\dagger}\partial_t\rho_t+\frac{1}{2}\Delta_{\rho_t}(\nabla \Delta_{\rho_t}^{\dagger}\partial_t\rho_t)^2\\
=&-\nabla\cdot(\partial_t\rho_t\nabla\Phi_t)+\frac{1}{2}\nabla\cdot(\rho_t\nabla (-\nabla\Phi_t)^2)\\
=&\nabla\cdot(\nabla\cdot(\rho_t\nabla\Phi_t)\nabla\Phi_t)+\nabla\cdot(\rho_t\nabla_{v_t}v_t)\\
=&\nabla\cdot \big(\rho_t (\frac{\nabla\cdot(\rho_tv_t)}{\rho_t}v_t+\nabla_{v_t}v_t)\big).
\end{split}
\end{equation*}
\end{proof}

The second example is to derive the Hessian operator in the density manifold. 
\begin{proposition}[Hessian operator by cotangent vectors]\label{Hess_new}
\begin{equation*}
\begin{split}
\textrm{Hess}_{W}\mathcal{F}(\rho)\langle V_{\Phi_1}, V_{\Phi_2}\rangle:=&\int_{M}\int_M\nabla_x\nabla_y \frac{\delta^2}{\delta\rho(x)\delta\rho(y)}\mathcal{F}(\rho)\nabla\Phi_1(x)\nabla\Phi_2(y)\rho(x)\rho(y)dxdy\\
+&\int_M (\nabla^2_x\frac{\delta}{\delta \rho(x)}\mathcal{F}(\rho) \nabla\Phi_1(x),\nabla\Phi_2(x))\rho(x)dx,
\end{split}
\end{equation*}
where $\nabla^2_x$ denotes the Hessian operator in $M$ w.r.t. $x$.
\end{proposition}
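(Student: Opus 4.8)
The plan is to deduce Proposition~\ref{Hess_new} directly from the primal-coordinate Hessian formula \eqref{c-hess} by inserting the cotangent representation $\sigma_i=\mathbf{V}_{\Phi_i}=-\nabla\cdot(\rho\nabla\Phi_i)=-\Delta_\rho\Phi_i$ from \eqref{cot}. The defining identity $(-\Delta_\rho)(-\Delta_\rho)^{\dagger}(-\Delta_\rho)=(-\Delta_\rho)$ gives $\Delta_\rho^{\dagger}\sigma_i=-\Phi_i$ modulo additive constants, hence $\nabla\Delta_\rho^{\dagger}\sigma_i=-\nabla\Phi_i$. Since every occurrence of $\Delta_\rho^{\dagger}\sigma_i$ in \eqref{c-hess} is acted on by a gradient, the constant ambiguity is harmless. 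I treat the two terms of \eqref{c-hess} separately.

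For the first (second-variation) term, I would substitute $\sigma_1(x)=-\nabla_x\cdot(\rho(x)\nabla_x\Phi_1(x))$ and $\sigma_2(y)=-\nabla_y\cdot(\rho(y)\nabla_y\Phi_2(y))$ and integrate by parts once in $x$ and once in $y$. Since $M$ is compact without boundary, no boundary terms arise, and the two minus signs are cancelled by the two sign changes from integration by parts, producing exactly the first line of the claim. For the second term, abbreviate $\delta:=\frac{\delta}{\delta\rho(x)}\mathcal{F}(\rho)$. Using $\Delta_\rho^{\dagger}\sigma_i=-\Phi_i$, the bracket in \eqref{c-hess} becomes $-\nabla\cdot(\sigma_1\nabla\Phi_2)-\nabla\cdot(\sigma_2\nabla\Phi_1)+\Delta_\rho(\nabla\Phi_1\cdot\nabla\Phi_2)$. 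Integrating by parts to move the outer derivative onto $\delta$, then replacing $\sigma_i=-\nabla\cdot(\rho\nabla\Phi_i)$ and integrating by parts once more, reduces the integrand to
\begin{equation*}
\tfrac12\rho\Big[\nabla\Phi_1\cdot\nabla(\nabla\delta\cdot\nabla\Phi_2)+\nabla\Phi_2\cdot\nabla(\nabla\delta\cdot\nabla\Phi_1)-\nabla\delta\cdot\nabla(\nabla\Phi_1\cdot\nabla\Phi_2)\Big]\ .
\end{equation*}

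The crux, and the only genuinely geometric step, is the pointwise polarization identity on $(M,g_M)$: for scalar functions $f,u,w$,
\begin{equation*}
\nabla u\cdot\nabla(\nabla f\cdot\nabla w)+\nabla w\cdot\nabla(\nabla f\cdot\nabla u)-\nabla f\cdot\nabla(\nabla u\cdot\nabla w)=2\,\nabla\nabla f(\nabla u,\nabla w)\ .
\end{equation*}
I would establish it by expanding each term of the form $\nabla X\cdot\nabla(\nabla Y\cdot\nabla Z)$ in a local orthonormal frame: metric compatibility and torsion-freeness of the Levi-Civita connection on $M$ give $\nabla X\cdot\nabla(\nabla Y\cdot\nabla Z)=\nabla\nabla Y(\nabla X,\nabla Z)+\nabla\nabla Z(\nabla X,\nabla Y)$. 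In the alternating combination above, the four terms carrying $\nabla\nabla u$ or $\nabla\nabla w$ cancel in pairs, leaving $2\,\nabla\nabla f(\nabla u,\nabla w)$. Taking $f=\delta$, $u=\Phi_1$, $w=\Phi_2$ collapses the bracketed integrand to $\rho\,\nabla\nabla\delta(\nabla\Phi_1,\nabla\Phi_2)$, so the second term equals $\int_M(\nabla\nabla\frac{\delta}{\delta\rho(x)}\mathcal{F}(\rho)\,\nabla\Phi_1,\nabla\Phi_2)\rho\,dx$, the second line of the claim.

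The expected obstacle is purely bookkeeping: one must verify that the repeated scalar products $\nabla\delta\cdot\nabla\Phi$ and their subsequent derivatives reassemble into the covariant Hessian $\nabla\nabla\delta$ rather than mere coordinate second derivatives. It is precisely the cancellation of the $\nabla\nabla\Phi_1$ and $\nabla\nabla\Phi_2$ contributions in the symmetrized expression, together with the absence of boundary terms on the closed manifold $M$, that makes the covariant Hessian emerge cleanly and completes the proof.
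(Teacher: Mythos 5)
Your proposal is correct and follows essentially the same route as the paper's own proof: substitute $\Delta_\rho^{\dagger}\mathbf{V}_{\Phi_i}=-\Phi_i$ into \eqref{c-hess}, handle the second-variation term by one integration by parts in each of $x$ and $y$, reduce the first-variation term by integration by parts to the symmetric bracket $\tfrac12\rho[\nabla\Phi_1\cdot\nabla(\nabla F\cdot\nabla\Phi_2)+\nabla\Phi_2\cdot\nabla(\nabla F\cdot\nabla\Phi_1)-\nabla F\cdot\nabla(\nabla\Phi_1\cdot\nabla\Phi_2)]$, and finish with the pointwise polarization identity (the paper's Claim 5) proved by frame expansion and cancellation of the $\nabla\nabla\Phi_i$ terms. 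No gaps; this is the paper's argument.
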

\begin{proof}
From \eqref{c-hess} and $\sigma_1(x)=V_{\Phi_1}$, $\sigma_2(x)=V_{\Phi_2}$, we have
\begin{equation*}\begin{split}
&\textrm{Hess}_{W}\mathcal{F}(\rho)(V_{\Phi_1}, V_{\Phi_2})\\
=&\int_{M}\int_M \frac{\delta^2}{\delta\rho(x)\delta\rho(y)}\mathcal{F}(\rho)\nabla\cdot(\rho(x)\nabla\Phi_1(x))\nabla\cdot(\rho(y)\nabla\Phi_2(y))dxdy \hspace{2.35cm} (H1)\\
+&\frac{1}{2}\int_M \frac{\delta}{\delta\rho(x)}\mathcal{F}(\rho) \big\{-\Delta_{\sigma_1}\Phi_2(x)-\Delta_{\sigma_2}\Phi_1(x)+\Delta_\rho(\nabla\Phi_1(x)\cdot\nabla\Phi_2(x))\big\}dx\hspace{1.1cm}(H2)
\end{split}
\end{equation*}
We apply the following two steps to estimate (H1) and (H2). First, by integration by parts w.r.t. $x$ and $y$ twice, we derive
\begin{equation*}
(H1)=\int_{M}\int_M \nabla_x\nabla_y\frac{\delta^2}{\delta\rho(x)\delta\rho(y)}\mathcal{F}(\rho)\nabla_x\Phi_1(x)\nabla_y\Phi_2(y)\rho(x)\rho(y)dxdy.
\end{equation*}
Second, denote $F(x)=\frac{\delta}{\delta\rho(x)}\mathcal{F}(\rho)$ in (T2), then
\begin{equation*}
\begin{split}
(H2)=&-\frac{1}{2}\int_M F(x) \nabla\cdot\big\{\sigma_1(x)\nabla\Phi_2(x)+\sigma_2(x)\nabla\Phi_1(x)-\rho(x)\nabla(\nabla\Phi_1(x)\cdot\nabla\Phi_2(x))\big\}dx\\
=&\quad\frac{1}{2}\int_M \nabla F(x) \big\{\sigma_1(x)\nabla\Phi_2(x)+\sigma_2(x)\nabla\Phi_1(x)-\rho(x)\nabla(\nabla\Phi_1(x)\cdot\nabla\Phi_2(x)) \big\}dx\\
=&\quad\frac{1}{2}\int_M \nabla F(x) \big(\sigma_1(x)\nabla\Phi_2(x)+\sigma_2(x)\nabla\Phi_1(x)\big)dx\hspace{2.7cm} (H21)\\
&-\frac{1}{2}\int_{M}\rho(x)\nabla F(x)\cdot\nabla(\nabla\Phi_1(x)\cdot\nabla\Phi_2(x))\big\}dx \hspace{3.2cm} (H22) 
\end{split}
\end{equation*}
We next derive (H21). Substitute $\sigma_1(x)=-\nabla\cdot(\rho(x)\nabla\Phi_1(x))$ and $\sigma_2(x)=-\nabla\cdot(\rho(x)\nabla\Phi_2(x))$ into the above formula, then 
\begin{equation*}
\begin{split}
(H21)=&-\frac{1}{2}\int_M \nabla F(x)\big(\nabla\cdot(\rho(x)\nabla\Phi_1(x))\nabla\Phi_2(x)+\nabla\cdot(\rho(x)\nabla\Phi_2(x))\nabla\Phi_1(x)\big)dx\\
=&\frac{1}{2}\int_M\big( \nabla\Phi_1(x) \nabla(\nabla F(x)\cdot\nabla\Phi_2(x))+   \nabla\Phi_2(x) \nabla(\nabla F(x)\cdot\nabla\Phi_1(x))  \big)\rho(x)dx.
\end{split}
\end{equation*}

We last prove the following claim. 

\noindent\textbf{Claim 5}:
\begin{equation}\label{claim}
\begin{split}
&2(\nabla\nabla F(x)\nabla\Phi_1(x),\nabla\Phi_2(x))\\
=&\nabla\Phi_1(x) \nabla(\nabla F(x)\cdot\nabla\Phi_2(x))+   \nabla\Phi_2(x) \nabla(\nabla F(x)\cdot\nabla\Phi_1(x))-\nabla F(x)\nabla(\nabla\Phi_1(x)\cdot\nabla\Phi_2(x)).
\end{split}
\end{equation}
\begin{proof}[Proof of Claim]
\begin{equation*}
\begin{split}
&\sum_{1\leq a,b\leq d}\nabla_a(\nabla_bF\cdot \nabla^b\Phi_1)\nabla^a\Phi_2+\nabla_a(\nabla_bF\cdot \nabla^b\Phi_2)\nabla^a\Phi_1-\nabla_a(\nabla_b\Phi_1\cdot \nabla^b\Phi_2)\nabla^aF\\
=&\sum_{1\leq a,b\leq d}\nabla_a\nabla_bF\nabla^a\Phi_1\nabla^b\Phi_2+\nabla_bF\nabla_a\nabla^b\Phi_1\nabla^a\Phi_2+\nabla_a\nabla_bF\nabla^a\Phi_1\nabla^b\Phi_2\\
&\quad\quad+\nabla_a\nabla^b\Phi_2\nabla^a\Phi_1-\nabla^b(\nabla_a\Phi_1\cdot\nabla^a\Phi_2)\\
=&\sum_{1\leq a,b\leq d}2\nabla_a\nabla_bF\nabla^a\Phi_1\nabla^b\Phi_2+\nabla_bF\{\nabla_a\nabla^b\Phi_1\nabla^a\Phi_2+\nabla_a\nabla^b\Phi_2\nabla^a\Phi_1-\nabla^b(\nabla_a\Phi_1\cdot\nabla^a\Phi_2)\}\\
=&2\sum_{1\leq a,b\leq d}\nabla_a\nabla_bF\nabla^a\Phi_1\nabla^b\Phi_2.
\end{split}
\end{equation*}
\end{proof}
Using the fact $\textrm{Hess}_{W}\mathcal{F}(\rho)(V_{\Phi_1}, V_{\Phi_2})=(H1)+(H2)=(H1)+(H21)+(H22)$ and the claim, we finish the proof. 
\end{proof}
%It is worth mentioning that $(H21)$ connects to formula (6) in \cite{BE}.
It is worth mentioning that $\eqref{claim}$ is exactly the formula (6) in Bakry-{\'E}mery Gamma calculus paper \cite{BE}. This means that writing the Wasserstein Christoffel symbol from tangent to cotangent bundle recovers the famous iterative Gamma one conditions. Following this angle, we further illustrate the connection between information geometry and Wasserstein geometry in the next subsection. 
\subsection{Transport information geometry and Bakry-{\'E}mery $\Gamma_2$ operator}
In this sequel, we illustrate an identity between the Bakry-{\'E}mery $\Gamma_2$ operator in $M$ and the weighted Laplacian operator $\Delta_\rho$ in $\mathcal{P}_+(M)$. Here the identity serves the bridge for relating the Fisher-Rao geometry and Wasserstein geometry.  

%$\mathcal{P}_+(M)$ and its base manifold $M$.
We brief review the Bakry-{\'E}mery Gamma calculus. 
Denote $h(x)\in C^{\infty}(M)$ and the operator $L_h\colon C^{\infty}(M)\rightarrow C^{\infty}(M)$ by 
\begin{equation*}
L_h\Phi=\Delta\Phi(x)-\nabla h(x)\cdot \nabla\Phi(x).
\end{equation*}
Let $\Phi_1(x)$, $\Phi_2(x)\in C^{\infty}(M)$. The $\Gamma$ operator is given by
\begin{equation*}
\Gamma(\Phi_1,\Phi_2)=\frac{1}{2}[L_h(\Phi_1\Phi_2)-\Phi_1L_h\Phi_2-\Phi_2L_h\Phi_1]=\nabla\Phi_1\cdot\nabla\Phi_2,
\end{equation*}
and the $\Gamma_2$ operator is introduced by 
\begin{equation*}
\Gamma_2(\Phi_1,\Phi_2)=\frac{1}{2}[L_h\Gamma(\Phi_1,\Phi_2)-\Gamma(L_h\Phi_1, \Phi_2)-\Gamma(L_h\Phi_2,\Phi_1)].
\end{equation*}
Denote the Gibbs measure $\rho^*(x)=\frac{1}{K}e^{-h(x)}$, where $K=\int_{M}e^{-h(x)}dx$. Consider the relative entropy 
\begin{equation*}
\mathcal{H}(\rho|\rho^*)=\int_M\rho(x)\log\frac{\rho(x)}{\rho^*(x)}dx=\int_M\rho(x)\log\rho(x)dx+\int_{M}\rho(x)h(x)dx+K.
\end{equation*}
We demonstrate that the Hessian formula of $\mathcal{H}(\rho|\rho^*)$ in density manifold gives the following identity in Riemannian manifold $M$. 
\begin{proposition}\label{col}
\begin{equation*}
\begin{split}
\textrm{Hess}_W\mathcal{H}(\rho|\rho^*)(V_{\Phi_1}, V_{\Phi_2})=&\int_{M}\frac{\Delta_\rho\Phi_1\Delta_\rho\Phi_2}{\rho}+\big(\nabla\nabla\log\frac{\rho}{e^{-h}}\nabla\Phi_1,\nabla\Phi_2\big)\rho dx\\
=&\int_M \Gamma_2(\Phi_1,\Phi_2)\rho dx.
\end{split}
\end{equation*}
In particular, let $\Phi(x)=\Phi_1(x)=\Phi_2(x)$ and $h(x)\equiv 0$, then
\begin{equation*}\begin{split}
&\int_{M}\frac{1}{\rho(x)}\big(\nabla\cdot(\rho(x)\nabla\Phi(x))\big)^2+(\nabla\nabla\log{\rho(x)} \nabla\Phi(x), \nabla\Phi(x))\rho(x)dx\\
=&\int_M \big[\textrm{Ric}_M(\nabla\Phi(x), \nabla\Phi(x))+ \textrm{tr}(\nabla\nabla\Phi(x) \nabla\nabla\Phi(x))\big]\rho(x)dx,
\end{split}
\end{equation*}
where $\textrm{Ric}_M$ denotes the Ricci tensor on $M$.
\end{proposition}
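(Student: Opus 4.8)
The plan is to evaluate $\textrm{Hess}_W\mathcal{H}(\rho|\rho^*)$ directly from Proposition \ref{Hess_new} and then to recognize the outcome as $\int_M\Gamma_2\,\rho\,dx$ through the Bochner formula. First I would record the two $L^2$ variations of the relative entropy. Since $\mathcal{H}(\rho|\rho^*)=\int_M\rho\log\rho\,dx+\int_M\rho h\,dx+\textrm{const}$,
\[
\frac{\delta}{\delta\rho(x)}\mathcal{H}(\rho|\rho^*)=\log\frac{\rho(x)}{e^{-h(x)}}+1,\qquad
\frac{\delta^2}{\delta\rho(x)\delta\rho(y)}\mathcal{H}(\rho|\rho^*)=\frac{\delta(x-y)}{\rho(x)}.
\]
Feeding these into \eqref{c-hess} (whose second term was already reduced to the Hessian-on-$M$ form in Proposition \ref{Hess_new}): the second-variation term collapses, via $\sigma_i=\mathbf{V}_{\Phi_i}=-\Delta_\rho\Phi_i$, to $\int_M\frac{\Delta_\rho\Phi_1\,\Delta_\rho\Phi_2}{\rho}\,dx$, while the first-variation term becomes $\int_M(\nabla\nabla\log\frac{\rho}{e^{-h}}\,\nabla\Phi_1,\nabla\Phi_2)\rho\,dx$, since the additive constant $1$ has vanishing Hessian on $M$. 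This is exactly the first displayed equality.

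For the second equality I would reduce to the unweighted case $h\equiv0$ together with the pointwise Bakry-\'Emery identity
\[
\Gamma_2(\Phi_1,\Phi_2)=\textrm{tr}(\nabla\nabla\Phi_1\,\nabla\nabla\Phi_2)+(\textrm{Ric}_M+\nabla\nabla h)(\nabla\Phi_1,\nabla\Phi_2),
\]
which is the polarized Bochner formula for $L_h=\Delta-\nabla h\cdot\nabla$. Writing $\nabla\nabla\log\frac{\rho}{e^{-h}}=\nabla\nabla\log\rho+\nabla\nabla h$, the two $\nabla\nabla h$ contributions match term by term, so everything comes down to the integrated Bochner identity
\[
\int_M\Big[\frac{\Delta_\rho\Phi_1\,\Delta_\rho\Phi_2}{\rho}+(\nabla\nabla\log\rho\,\nabla\Phi_1,\nabla\Phi_2)\rho\Big]dx=\int_M\Big[\textrm{Ric}_M(\nabla\Phi_1,\nabla\Phi_2)+\textrm{tr}(\nabla\nabla\Phi_1\,\nabla\nabla\Phi_2)\Big]\rho\,dx,
\]
which is the polarization of the special case stated in the proposition.

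To establish this identity (I would take $\Phi_1=\Phi_2=\Phi$, polarization being routine), I would expand $\Delta_\rho\Phi=\rho\Delta\Phi+\nabla\rho\cdot\nabla\Phi$ and $(\nabla\nabla\log\rho\,\nabla\Phi,\nabla\Phi)\rho=(\nabla\nabla\rho\,\nabla\Phi,\nabla\Phi)-\frac{(\nabla\rho\cdot\nabla\Phi)^2}{\rho}$. The two $\frac{(\nabla\rho\cdot\nabla\Phi)^2}{\rho}$ pieces cancel, so the left side equals $\int_M[\rho(\Delta\Phi)^2+2\Delta\Phi\,\nabla\rho\cdot\nabla\Phi+(\nabla\nabla\rho\,\nabla\Phi,\nabla\Phi)]dx$. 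For the right side I would insert the classical Bochner formula $\tfrac12\Delta|\nabla\Phi|^2=\textrm{tr}(\nabla\nabla\Phi\,\nabla\nabla\Phi)+\nabla\Phi\cdot\nabla\Delta\Phi+\textrm{Ric}_M(\nabla\Phi,\nabla\Phi)$, integrate against $\rho\,dx$, and integrate by parts (no boundary terms, as $M$ is closed) to move $\Delta$ off $|\nabla\Phi|^2$ and $\nabla\Delta\Phi$ onto $\rho\nabla\Phi$. Matching the two sides then reduces to the single identity
\[
\tfrac12\int_M|\nabla\Phi|^2\Delta\rho\,dx=\int_M\big[(\nabla\nabla\rho\,\nabla\Phi,\nabla\Phi)+\Delta\Phi\,\nabla\rho\cdot\nabla\Phi\big]dx,
\]
which follows by integrating $\int_M(\nabla\nabla\rho\,\nabla\Phi,\nabla\Phi)\,dx$ by parts once and using the symmetry of the Hessian to rewrite $\sum_{a,b}\nabla_b\rho\,\nabla^a\Phi\,\nabla_a\nabla^b\Phi=\tfrac12\,\nabla\rho\cdot\nabla|\nabla\Phi|^2$.

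The hard part will be this bookkeeping of repeated integrations by parts together with the Hessian-symmetry step; the decisive simplification is spotting the cancellation of the $\frac{(\nabla\rho\cdot\nabla\Phi)^2}{\rho}$ terms, after which the remaining computation is mechanical. Finally, specializing to $h\equiv0$ and $\Phi_1=\Phi_2=\Phi$ recovers the stated Bochner-type formula with the Ricci tensor $\textrm{Ric}_M$, completing the proof.
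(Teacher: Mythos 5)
Your proposal is correct, but it proves the second equality by a genuinely different route than the paper. For the first equality you and the paper do the same thing: compute $\frac{\delta}{\delta\rho}\mathcal{H}=\log\frac{\rho}{e^{-h}}+1$ and $\frac{\delta^2}{\delta\rho(x)\delta\rho(y)}\mathcal{H}=\frac{\delta(x-y)}{\rho(x)}$ and substitute into the Hessian formula of Proposition \ref{Hess_new} (you are in fact more explicit than the paper, which simply asserts the resulting display \eqref{d-1}). The divergence is in how the identity $\textrm{Hess}_W\mathcal{H}=\int_M\Gamma_2(\Phi_1,\Phi_2)\rho\,dx$ is obtained. The paper stays inside $\Gamma$-calculus: it rewrites the term $\int_M(\nabla\nabla\log\rho\,\nabla\Phi_1,\nabla\Phi_2)\rho\,dx$ via the identity \eqref{claim} expressed with $\Gamma$ operators and integration by parts, uses the osmotic identity $\frac{\Delta_\rho\Phi}{\rho}=\Gamma(\log\rho,\Phi)+\Delta\Phi$, cancels the $\Gamma(\log\rho,\cdot)$ terms, and recognizes the \emph{definition} of $\Gamma_2$ for $L_h=\Delta-\nabla h\cdot\nabla$; the Bochner formula (4a) of Bakry--\'Emery enters only at the very end, to convert $\Gamma_2$ into the $\textrm{Ric}_M+\textrm{tr}$ form. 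You instead invoke the Bakry--\'Emery/Bochner identity $\Gamma_2=\textrm{tr}(\nabla\nabla\Phi_1\nabla\nabla\Phi_2)+(\textrm{Ric}_M+\nabla\nabla h)(\nabla\Phi_1,\nabla\Phi_2)$ \emph{first}, cancel the $\nabla\nabla h$ contributions on both sides, and reduce everything to one integrated identity with $h\equiv 0$, which you then verify by hand: expanding $\Delta_\rho\Phi=\rho\Delta\Phi+\nabla\rho\cdot\nabla\Phi$ and $(\nabla\nabla\log\rho\,\nabla\Phi,\nabla\Phi)\rho=(\nabla\nabla\rho\,\nabla\Phi,\nabla\Phi)-\frac{(\nabla\rho\cdot\nabla\Phi)^2}{\rho}$, spotting the cancellation of the $\frac{(\nabla\rho\cdot\nabla\Phi)^2}{\rho}$ terms, and finishing with the classical Bochner formula plus repeated integration by parts (your final auxiliary identity and the Hessian-symmetry step check out). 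The trade-off: the paper's argument establishes $\textrm{Hess}_W\mathcal{H}=\int_M\Gamma_2\,\rho\,dx$ purely from the algebraic definition of $\Gamma_2$, with no curvature input, so the Ricci form is a clean corollary; your argument is more elementary in that it only uses the classical Bochner formula on $M$, but it makes the $\Gamma_2$ identity logically dependent on Bochner rather than on the definition of $\Gamma_2$, and it effectively re-derives an integrated weighted Bochner identity that the paper gets for free from the $\Gamma$-calculus bookkeeping. Both are complete and correct proofs.
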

\begin{proof}
The proof is straightforward by the Hessian operator of relative entropy in density manifold. Since \begin{equation}\label{d-1}
\begin{split}
\textrm{Hess}_W\mathcal{H}(\rho|\rho^*)(V_{\Phi_1}, V_{\Phi_2})%&\int_{M}\frac{\Delta_\rho\Phi_1\Delta_\rho\Phi_2}{\rho}+\big(\nabla\nabla\log\frac{\rho}{e^{-h}}\nabla\Phi_1,\nabla\Phi_2\big)\rho dx\\
=&\int_{M}\frac{\Delta_\rho\Phi_1\Delta_\rho\Phi_2}{\rho}+\big(\nabla\nabla\log{\rho}\nabla\Phi_1,\nabla\Phi_2\big)\rho+\big(\nabla\nabla h\nabla\Phi_1,\nabla\Phi_2\big)\rho dx.
\end{split}
\end{equation}
Rewriting \eqref{claim} with $\Gamma$ operator and doing integration by parts, we have
\begin{equation}\label{d-2}
\begin{split}
&\int_M(\nabla\nabla\log{\rho}\nabla\Phi_1,\nabla\Phi_2)\rho dx\\
=&\frac{1}{2}\int_M\big[\Gamma(\Gamma(\log{\rho}, \Phi_1), \Phi_2)+\Gamma(\Gamma(\log\rho, \Phi_2), \Phi_1)-\Gamma(\Gamma(\Phi_1, \Phi_2), \log{\rho})\big]\rho dx\\
=&-\frac{1}{2}\int_M\Gamma(\log\rho, \Phi_1)\Delta_\rho\Phi_2+\Gamma(\log\rho, \Phi_2)\Delta_\rho\Phi_1- \Gamma(\log\rho, \Gamma(\Phi_1,\Phi_2))\rho  dx.
\end{split}
\end{equation}
Notice 
\begin{equation}\label{d-31}
\nabla\log\rho=\frac{1}{\rho}\nabla\rho,
\end{equation} 
then 
\begin{equation}\label{d-3}
\frac{\Delta_\rho\Phi}{\rho}=\frac{\nabla\cdot(\rho\nabla\Phi)}{\rho}=\frac{\Gamma(\rho,\Phi)+\rho\Delta\Phi}{\rho}=\Gamma(\log\rho, \Phi)+\Delta\Phi.
\end{equation}
Substituting \eqref{claim}, \eqref{d-2}, \eqref{d-31}, \eqref{d-3} into \eqref{d-1}, we have
\begin{equation*}
\begin{split}
&\textrm{Hess}_W\mathcal{H}(\rho|\rho^*)(V_{\Phi_1}, V_{\Phi_2})\\
=&\frac{1}{2}\int_{M}(\frac{\Delta_\rho\Phi_1}{\rho}-\Gamma(\log\rho, \Phi_1))\Delta_{\rho}\Phi_2+(\frac{\Delta_\rho\Phi_1}{\rho}-\Gamma(\log\rho, \Phi_2))\Delta_{\rho}\Phi_1-\Gamma(\log\rho, \Gamma(\Phi_1,\Phi_2))\rho dx\\
&+\int_M\big(\nabla\nabla h\nabla\Phi_1,\nabla\Phi_2\big)\rho dx\\
=&\frac{1}{2}\int_{M}\Big\{\Delta\Phi_1\Delta_\rho\Phi_2+\Delta\Phi_2\Delta_{\rho}\Phi_1-\Gamma(\rho, \Gamma(\Phi_1,\Phi_2))\Big\}+\Big\{(\nabla\nabla h\nabla\Phi_1,\nabla\Phi_2)\Big\}\rho dx\\
=&\frac{1}{2}\int_M\Big\{\Delta\Gamma(\Phi_1,\Phi_2)-\Gamma(\Delta\Phi_1,\Phi_2)-\Gamma(\Delta\Phi_2,\Phi_1)\Big\}\rho\\
&\quad+\Big\{\Gamma(\Phi_1, \Gamma(V,\Phi_2))+\Gamma(\Phi_2, \Gamma(V,\Phi_1))-\Gamma(V, \Gamma(\Phi_1,\Phi_2)) \Big\}\rho dx\\
=&\frac{1}{2}\int_M\Big\{(\Delta-\nabla h\cdot\nabla)\Gamma(\Phi_1,\Phi_2)-\Gamma((\Delta-\nabla h\cdot \nabla)\Phi_1,\Phi_2)-\Gamma((\Delta-\nabla h\cdot\nabla)\Phi_2,\Phi_1)\Big\}\rho dx\\
=&\int_M\Gamma_2(\Phi_1,\Phi_2)\rho dx,
\end{split}
\end{equation*}
where the last equality is from the definition of $\Gamma_2$ operator. As in (4a) of \cite{BE}, from the Bochner's formula, we have \begin{equation*}
\begin{split}
\Gamma_2(\Phi_1,\Phi_2)=&(\nabla\nabla h\nabla\Phi_1,\nabla\Phi_2)+\textrm{Ric}_M(\nabla\Phi_1, \nabla\Phi_2)+tr(\nabla\nabla\Phi_1 \nabla\nabla\Phi_2),
\end{split}
\end{equation*}
which finishes the proof.
\end{proof}
\begin{remark}
It is worth mentioning that the connection between Wasserstein Hessian and Gamma two operators have been observed in \cite{OV, vil2008}. Our contribution is to reformulate it into the tangent space and observe the relation of connecting tangent and cotangent bundles. In particular, we notice that we discover the relation of Bochner's formula with the Fisher-Rao metric. Following this viewpoint, the generalization of Gamma two operators becomes feasible. This needs to follow the combination angle of information geometry and transport geometry presented in this work. We leave details of their formulations in future work. 
\end{remark}
\begin{remark}
We note that if $\rho(x)\equiv 1$ and $h(x)\equiv 0$, then Proposition \ref{col} shows the standard Yano's formula \cite{Yano}, i.e.
\begin{equation*}
\int_{M}\big(\nabla\cdot(\nabla\Phi(x))\big)^2dx=\int_M \textrm{Ric}_M(\nabla\Phi(x), \nabla\Phi(x))+ \textrm{tr}(\nabla\nabla\Phi(x) \nabla\nabla\Phi(x))dx.
\end{equation*}
The derivation of Yano's formula by Hessian operator in density manifold is reported in \cite{li-theory, li-thesis}, which is one of the motivations for this paper.
This fact will further generalize various Poincar{\'e} inequalities. We leave details studies of this point in future work. 
\end{remark}
\section{Differential equations in probability manifold}\label{section4}
In this section, we use several examples to illustrate the geometry formulas in $\mathcal{P}_+(G)$. 

\begin{example}[Nonlinear metric tensor]
Consider a three vertices graph with $V=\{1,2,3\}$ and $\omega_{12}=\omega_{23}=1$, i.e. \begin{tikzpicture}[->,shorten >=1pt,auto,node distance=2cm,
        thick,main node/.style={circle,fill=blue!20,draw,minimum size=0.5cm,inner sep=0pt]} ]
   \node[main node] (1) {1};
    \node[main node] (2) [right of=1]  {2};
    \node[main node](3)[right of=2]{3};
    \path[-]
    (1) edge node {} (2)
    (2) edge node{} (3);
\end{tikzpicture}.
The probability simplex supported on the above graph is a triangular in $\mathbb{R}^3$, which is a two dimensional manifold.
\begin{equation*}
\mathcal{P}(G)=\{(\rho_i)_{i=1}^3\in \mathbb{R}^3~:~\sum_{i=1}^3\rho_i=1, \quad \rho_i\geq 0\}.
\end{equation*}
 The $L^2$ Wasserstein metric on this graph solves the variational problem \eqref{w2}:
\begin{equation*}
\inf_{\Phi(t)\colon \rho(0)=\rho^0,~\rho(1)=\rho^1}\int_0^1 (\Phi_1(t)-\Phi_2(t))^2\frac{\rho_1(t)+\rho_2(t)}{2}+(\Phi_2(t)-\Phi_3(t))^2\frac{\rho_2(t)+\rho_3(t)}{2}  dt
\end{equation*}
Let $\rho_2=1-\rho_1-\rho_3$ and $\dot\rho=L(\rho)\Phi$, i.e. $\dot\rho_1=(\Phi_1-\Phi_2)\frac{\rho_1+\rho_2}{2}$, $\dot\rho_3=(\Phi_3-\Phi_2)\frac{\rho_2+\rho_3}{2}$, then \eqref{w2} can be reformulated as \begin{equation*}
\inf_{\rho(t)\in \mathcal{P}_{+}(G)\colon \rho(0)=\rho^0,~\rho(1)=\rho^1}\int_0^1\frac{\dot\rho_1(t)^2}{1-\rho_3(t)}+\frac{\dot\rho_3(t)^2}{1-\rho_1(t)}dt.
\end{equation*}
By solving the above problem numerically, a geodesic triangular connecting three discrete probabilities in $\mathcal{P}_+(G)$ is provided. It can be seen that $(\mathcal{P}_+(G), g_W)$ is with a nonlinear metric tensor and the geodesics depend on the structure of graph. 
\begin{figure}[h]
%\vspace{-0.5cm}
\centering
\includegraphics[width=0.4\textwidth]{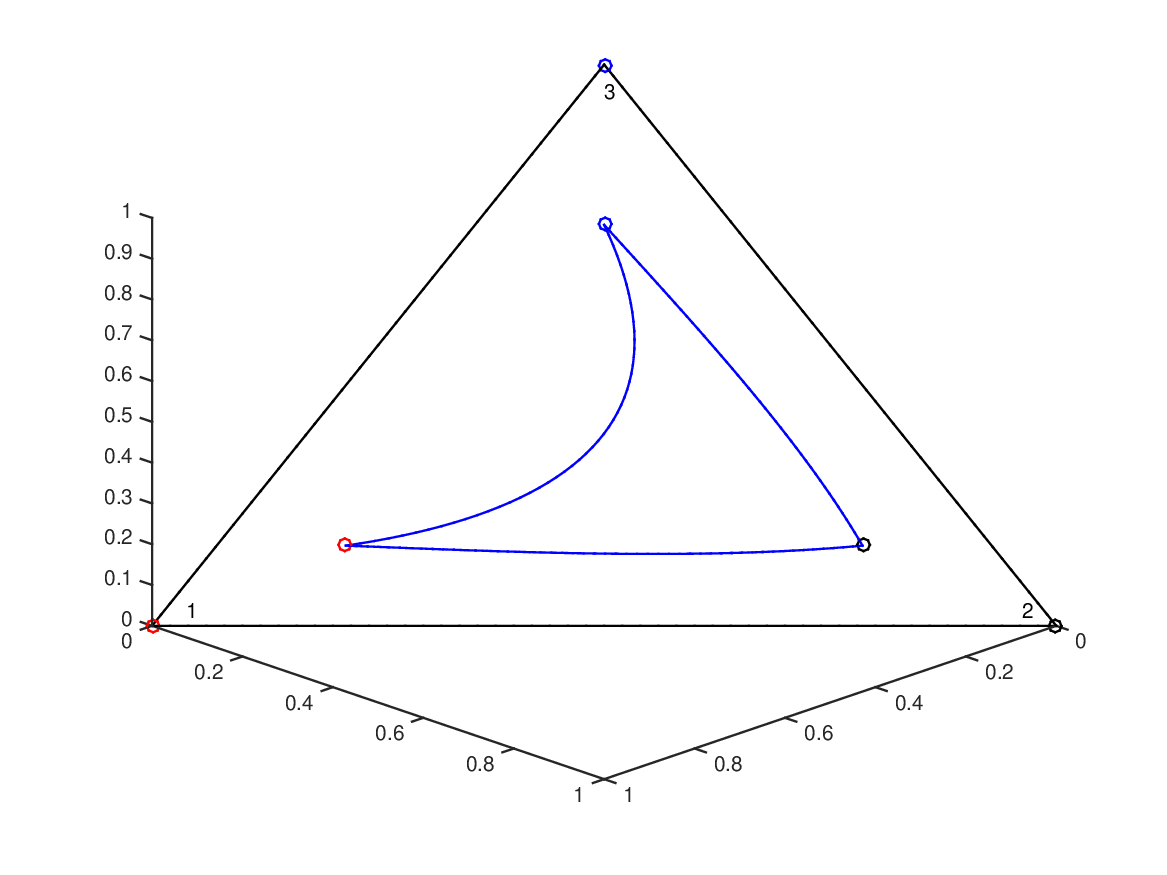}
\end{figure}
%\vspace{-0.5cm}
\end{example}

\begin{example}[Ordinary differential equations in probability manifold]
Consider $\mathcal{F}(\rho)\in C^{\infty}(\mathcal{P}(G))$. On one hand, the gradient flow of $\mathcal{F}(\rho)$ gives 
\begin{equation*}
\dot\rho=-\textrm{grad}_W\mathcal{F}(\rho),
\end{equation*}
i.e. 
\begin{equation}\label{gradient}
\dot\rho=-L(\rho)d_\rho\mathcal{F}(\rho)=\textrm{div}(\rho\nabla_Gd_\rho\mathcal{F}(\rho)).
\end{equation}
 On the other hand, the Hamiltonian flow of $\mathcal{F}(\rho)$ refers $$\nabla_{\dot \rho}\dot\rho=\ddot\rho+(\dot\rho^{\ts} \Gamma^k\dot\rho)_{k=1}^n=-d_\rho\mathcal{F}(\rho),$$ 
i.e.
\begin{equation}\label{Hamiltonian}
\ddot\rho-L(\dot\rho)L(\rho)^{\dagger}\dot\rho+\frac{1}{2}L(\rho)(\nabla_GL(\rho)^{\dagger}\dot\rho\circ\nabla_GL(\rho)^{\dagger}\dot\rho)=-d_\rho\mathcal{F}(\rho).
\end{equation}
Equation \eqref{Hamiltonian} can be rewrite as the first order ODE system. Consider the inner coordinate (Legendre transform), i.e. $\dot\rho=L(\rho)\Phi$, then \eqref{Hamiltonian} forms \begin{equation*}
\dot\rho+\textrm{div}(\rho\nabla_G\Phi)=0,\quad \dot\Phi+\frac{1}{2}\nabla_G\Phi\circ\nabla_G\Phi=-d_\rho\mathcal{F}(\rho),
\end{equation*}
which can also be denoted as 
\begin{equation*}\label{g_ham}
 \dot\rho=\frac{\partial}{\partial \Phi}\mathcal{H}(\rho,\Phi),\quad \dot\Phi=-\frac{\partial}{\partial\rho}\mathcal{H}(\rho,\Phi),
  \end{equation*}
 where 
\begin{equation*}
 \mathcal{H}(\rho, \Phi):=\frac{1}{2}(\nabla_G \Phi, \nabla_G \Phi)_\rho+\mathcal{F}(\rho).
\end{equation*}
Several examples of \eqref{gradient}, \eqref{Hamiltonian} are studied in \cite{li-SE, li-theory}. For studying these ODEs' dynamical properties, the derived Hessian operators, curvature tensors and Jacobi equations in $(\mathcal{P}_+(G), g_W)$ are needed. We will work on them in this series of work.
\end{example}

In addition, we are curious about the drift diffusion process associated with the canonical volume form in $(\mathcal{P}_+(G), g_W)$. 
\begin{example}[Stochastic differential equations in probability manifold]
Consider the Fokker-Planck equation in $(\mathcal{P}_+(G), g_W)$ with drift vector $\nabla_W\mathcal{F}(\rho)$, $\mathcal{F}(\rho)\in C^{\infty}(\mathcal{P}(G))$, diffusion constant $\beta>0$ on a compact set $\mathcal{B}\subset \mathcal{P}_+(G)$. The zero-flux condition is proposed on $\partial \mathcal{B}$. 
Then the Fokker-Planck equation $\frac{\partial \mathbb{P}(t,\rho)}{\partial t}=\textrm{div}_{W}(\mathbb{P}(t,\rho)\nabla_{W}\mathcal{F}(\rho))+\beta\Delta_{W}\mathbb{P}(t,\rho)$ satisfies 
\begin{equation}\label{FPE}
\frac{\partial \mathbb{P}(t,\rho)}{\partial t}=\Pi(\rho)^{\frac{1}{2}}\nabla_\rho\cdot(L(\rho)\big(\mathbb{P}(t,\rho)d_\rho\mathcal{F}(\rho)+\beta d_\rho\mathbb{P}(t,\rho)\big) \Pi(\rho)^{-\frac{1}{2}}).
\end{equation}
The associated drift-diffusion process on $(\mathcal{P}_+(G), g_W)$ satisfies
\begin{equation}\label{SDE}
d\rho_t=-L(\rho_t)(d_{\rho_t}\mathcal{F}(\rho_t)+\frac{\beta}{2} d_{\rho_t}\log\Pi(\rho_t))dt+\sqrt{2\beta}{L(\rho_t)}^{\frac{1}{2}}dB_t.
\end{equation}
The stationary solution of \eqref{FPE} is a Gibbs measure in the probability set over manifold $(\mathcal{P}_+(G), g_W)$,
\begin{equation}\label{WGibbs}
\mathbb{P}^*(\rho)=\frac{1}{K}e^{-\frac{\mathcal{F}(\rho)}{\beta}},\quad \textrm{where $K=\int_{\mathcal{B}}\Pi(\rho)^{-\frac{1}{2}}e^{-\frac{\mathcal{F}(\rho)}{\beta}}d\textrm{vol}$}.
\end{equation}
\begin{proof}
Using the fact that 
\begin{equation*}
d_\rho\mathbb{P}(t,\rho)=\mathbb{P}(t,\rho)d_\rho\log\mathbb{P}(t,\rho),
\end{equation*}
then \eqref{FPE} forms
\begin{equation}\label{FPE1}
\frac{\partial \mathbb{P}(t,\rho)}{\partial t}=\Pi(\rho)^{\frac{1}{2}}\nabla_\rho\cdot(L(\rho)\mathbb{P}(t,\rho)\big(d_\rho\mathcal{F}(\rho)+ \beta d_\rho\log\mathbb{P}(t,\rho)\big) \Pi(\rho)^{-\frac{1}{2}}).
\end{equation}
Denote the density function in Euclidean volume form, $f(t,\rho)=\mathbb{P}(t,\rho) \Pi(\rho)^{-\frac{1}{2}}$, then \eqref{FPE1} forms
\begin{equation*}
\frac{\partial f(t,\rho)}{\partial t}=\nabla_\rho\cdot [f(t,\rho) L(\rho) d_\rho\mathcal{F}(\rho)]+\beta\nabla_\rho\cdot [f(t,\rho) L(\rho)d_\rho\log \frac{f(t,\rho)}{\Pi(\rho)^{-\frac{1}{2}}}].
\end{equation*}
Thus the infinitesimal generator of \eqref{FPE} is 
\begin{equation*}
A\hat{f}(\rho)=-\sum_{i=1}^n\frac{\partial \hat{f}(\rho)}{\partial \rho_i}\big(L(\rho)(d_\rho\mathcal{F}(\rho)+\frac{\beta}{2}d_\rho\log\Pi(\rho)\big)(i)+\beta\sum_{1\leq i,j\leq n} (L(\rho)^{\frac{1}{2}}L(\rho)^{\frac{1}{2}})_{ij}\frac{\partial^2\hat{f}(\rho)}{\partial\rho_i\partial\rho_j},
\end{equation*}
for any compactly-supported $C^2$ function $\hat{f}(\rho)$ with $\rho\in \mathcal{B}$. Then the stochastic differential equation \eqref{SDE} is derived. 
By solving $\frac{\partial}{\partial t}f(t,\rho)=0$, we have
$$f^*(\rho)=\frac{1}{K}\Pi(\rho)^{-\frac{1}{2}}e^{-\frac{\mathcal{F}(\rho)}{\beta}},\quad \textrm{where $K=\int_{\mathcal{B}}\Pi(\rho)^{-\frac{1}{2}}e^{-\frac{\mathcal{F}(\rho)}{\beta}}d\textrm{vol}$},$$
i.e. $\mathbb{P}^*(\rho)=f^*(\rho)\Pi(\rho)^{\frac{1}{2}}$ satisfies \eqref{WGibbs}. 
\end{proof}
\begin{remark}
Formula \eqref{SDE} suggests that the drift-diffusion process in $(\mathcal{P}_+(M), W)$ forms 
\begin{equation*}
d\rho(t,x)=\nabla\cdot\big(\rho(t,x)\nabla\frac{\delta}{\delta\rho}(\mathcal{F}+\frac{\beta}{2}\log\textrm{det}(-\Delta_\rho))(t,x)\big)dt+\sqrt{2\beta}(-\nabla\cdot(\rho(t,x)\nabla))^{\frac{1}{2}}\cdot dB(t,x),
\end{equation*}
where $B(t,x)$ is the standard space-time Brownian motion. 
\end{remark}
\end{example}
\section{Discussion}\label{section7}
In this paper, we introduce the geometry formulas in probability manifold on graphs with $L^2$-Wasserstein metric. 
The main idea is to propose a Riemannian metric in the positive measure space, and study the probability manifold as its submanifold. Similar derivations have been introduced into the infinite-dimensional density manifold, whose base is a finite-dimensional Riemannian manifold.

The current study leaves many questions open both analytically and geometrically.  On the one hand, it is needed to understand the regularity issues behind the Riemannian metric defined on the positive measure space. 
In other words, given $\mu^0$, $\mu^1\in \mathcal{M}_+(M)$, consider a distance function $\textrm{Dist}_{\mathcal{M}}\colon \mathcal{M}_+(M)\times \mathcal{M}_+(M)\rightarrow \mathbb{R}$ by 
\begin{equation*}
\begin{split}
&\textrm{Dist}_{\mathcal{M}}(\mu_0, \mu_1)^2\\
=&\inf_{\mu_t\in\mathcal{M}_+(M),~\mu(0)=\mu^0,~\mu(1)=\mu^1}\big\{\int_0^1\int_{M}  \partial_t\mu_tg(\mu_t)\partial_t\mu_tdx dt\big\}\\
=&\inf_{\Phi_t\colon\mu_0=\mu^0,~\mu_1=\mu^1}\big\{\int_0^1\big[\int_{M} (\nabla\Phi_t)^2\mu_tdx+ (\int_{M} \Phi_tdx)^2\big]dt\colon \partial_t\mu_t+\nabla\cdot(\rho_t\nabla\Phi_t)=\int_M\Phi_tdx\big\},
\end{split}
\end{equation*}
where the second equality is from the change of variable $\partial_t\mu_t=g(\mu_t)^{-1}\Phi_t$, and $g(\mu_t)$ is defined by \eqref{gmu}. 

On the other hand, there are many interactions and connections between the geometry formulas in optimal transport density manifold, information geometry, and its base manifold $M$; see Proposition \ref{col} for the connection with Gamma two operators. We expect that the geometric study of density manifold would introduce generalized versions of Gamma two operators. These understandings will provide the insights for the ``geometry'' of finite graphs and statistical models in AI and inference problems \cite{Amari, divergence, IG2}, which are useful for modeling and computations.  In this series of work, we will continue to address these problems.

%%\textbf{Acknowledgement}
%%The author would thank Prof. Shui-Nee Chow, Haomin Zhou, Wilfrid Gangbo for the motivation of this paper.
%\bibliographystyle{abbrv}
%\bibliography{pm}
\end{document}